\newtheorem{theorem}{Theorem}
\newtheorem{lemma}{Lemma}
\newtheorem{conjecture}{Conjecture}
\newtheorem{assumption}{Assumption}
\newcommand{\df}[2]{\frac{\text{d} \, #1}{\text{d} #2}}
\newcommand{\E}{\text{E}}
\begin{document}

\preprint{APS/123-QED}

\title{Transition from time-variant to static networks: \\timescale separation in the N-Intertwined Mean-Field Approximation of Susceptible-Infectious-Susceptible epidemics}% Force line breaks with \\
%\thanks{A footnote to the article title}%
\author{Robin Persoons}
\altaffiliation{Faculty of Electrical Engineering, Mathematics and Computer Science, Delft University of Technology}
\altaffiliation{Corresponding Author: r.d.l.persoons@tudelft.nl}
\author{Mattia Sensi}%
\altaffiliation{MathNeuro Team, Inria at Universit\'e C\^ote d'Azur}
\altaffiliation{Department of Mathematical Sciences ``G. L. Lagrange'', Politecnico di Torino}
\author{Bastian Prasse}
\altaffiliation{European Centre for Disease Prevention and Control (ECDC)}
\author{Piet Van Mieghem}
\altaffiliation{Faculty of Electrical Engineering, Mathematics and Computer Science, Delft University of Technology}

% \author{Robin Persoons}
%  \email{Corresponding author: r.d.l.persoons@tudelft.nl}
% \affiliation{%
%  Faculty of Electrical Engineering, Mathematics and Computer Science, Delft University of Technology, \\P.O. Box 5031, 2600 GA Delft, The Netherlands.
% }%

% \author{Mattia Sensi}
% \affiliation{
% MathNeuro Team, Inria at Universit\'e C\^ote d'Azur, 2004 Rte des Lucioles, 06410 Biot, France
% }%
% \affiliation{
%  Department of Mathematical Sciences ``G. L. Lagrange'', Politecnico di Torino,\\ 
%  Corso Duca degli Abruzzi 24, 10129 Torino Italy
% }%
% \author{Bastian Prasse}
% \affiliation{%
% European Centre for Disease Prevention and Control (ECDC), Gustav III's Boulevard 40, 169 73 Solna, Sweden.
% }%
% \author{Piet Van Mieghem}
% \affiliation{%
%  Faculty of Electrical Engineering, Mathematics and Computer Science, Delft University of Technology, \\P.O. Box 5031, 2600 GA Delft, The Netherlands.
% }%

\date{\today}% It is always \today, today,
             %  but any date may be explicitly specified

\begin{abstract}
We extend the N-Intertwined Mean-Field Approximation (NIMFA) for the Susceptible-Infectious-Susceptible (SIS) epidemiological process to time-varying networks. Processes on time-varying networks are often analysed under the assumption that the process and network evolution happen on different timescales. This approximation is called timescale separation. We investigate timescale separation between disease spreading and topology updates of the network. We introduce the transition times $\mathrm{\underline{T}}(r)$ and $\mathrm{\overline{T}}(r)$ as the boundaries between the intermediate regime and the annealed (fast changing network) and quenched (static network) regimes, respectively, for a fixed accuracy tolerance $r$. By analysing the convergence of static NIMFA processes, we analytically derive upper and lower bounds for $\mathrm{\overline{T}}(r)$. Our results provide insights/bounds on the time of convergence to the steady state of the static NIMFA SIS process. We show that, under our assumptions, the upper-transition time $\mathrm{\overline{T}}(r)$ is almost entirely determined by the basic reproduction number $R_0$ of the network. The value of the upper-transition time $\mathrm{\overline{T}}(r)$ around the epidemic threshold is large, which agrees with the current understanding that some real-world epidemics cannot be approximated with the aforementioned timescale separation.
\end{abstract}

\maketitle

%\tableofcontents

\section{Introduction}

The modelling of infectious disease spreading has been central in mathematical biology for almost a century \cite{anderson1991infectious}. Proper models of disease spreading are fundamental in describing, forecasting and controlling the evolution of epidemics. One of the most common assumptions is \emph{homogeneous mixing}, which implies that any individual in the population is equally likely to encounter (and infect or get infected by) any other individual. Although homogeneous mixing greatly simplifies the analysis of the models, it is rather unrealistic, because individuals in real populations have more contacts with, for example, their family, friends and colleagues, which suggests that contacts are heterogeneous. Such heterogeneous scenarios can be modelled as networks, in which nodes represent individuals and links represent contacts.

A vast majority of papers on epidemics in networks focuses on \emph{static} networks \cite{pastor2015epidemicreview,nowzari2016analysis,mei2017dynamics}. However, real-world contact networks are evolving, as individuals move frequently and encounter different groups (family, colleagues, commuters on public transport, etc.). In particular, contact networks evolve during an epidemic as well and changes in the contact network play a crucial role on the spread of epidemics \cite{holme2012temporal,holme2015modern}. 

The analysis of epidemics on time-variant networks is often under the assumption of timescale separation: either the network changes significantly faster than the spread of the disease or the epidemic evolves significantly faster than the topology updates of the network. Timescale separation of the network and the epidemic results in three regimes (Fig. \ref{fig:tikzschematischeoverview}). The three regimes correspond to the network topology updates being much faster (annealed regime), comparable (intermediate regime) or much slower (quenched regime) than the spread of the disease. 

\begin{figure}[ht]
    \centering
\begin{tikzpicture}[path image/.style={
    path picture={
    \node at (path picture bounding box.center) {
    \includegraphics[height=3cm]{#1}
    };}}]

    \draw[->] (0,0) -- (7,0);
    \draw[-] (0,-0.5) -- (0,1);  
    \draw[dashed] (2.1,-0.5) -- (2.1,1);
    \draw[dashed] (4.9,-0.5) -- (4.9,1);
    \node at (6,-0.5) {\large $\Delta{t} \rightarrow \infty$};
    \node at (1,-0.5) {\large $\Delta{t} \rightarrow 0$};
    \node at (1,0.7){\large Annealed};
    \node at (1,0.3) {\large Regime};
    \node at (3.5,0.7){\large Intermediate};
    \node at (3.5,0.3) {\large Regime};
    \node at (6,0.7){\large Quenched};
    \node at (6,0.3) {\large Regime};
    \node at (5,1.5) {\large $\mathrm{\overline{T}}(r)$};
    \node at (2.2,1.5) {\large $\mathrm{\underline{T}}(r)$};
    \node at (0,1.5) {\large 0};
    \node at (7.2,0) {\large $t$};
\end{tikzpicture}  
    \caption{Illustration of the timescale separation. The time between two changes in the graph $\Delta t$ is increasing on the $t$-axis. The transition times $\mathrm{\underline{T}}(r)$ and $\mathrm{\overline{T}}(r)$ create the borders of the annealed and quenched regimes by including processes that are approximately annealed or quenched to the respective regimes, based on the accuracy tolerance $r$. The annealed regime is bounded from below by $\Delta{t} = 0$. The intermediate regime lies between the transition times $\mathrm{\underline{T}}(r)$ and $\mathrm{\overline{T}}(r)$. The quenched regime extends from $\Delta{t} = \mathrm{\overline{T}}(r)$ to infinity.}
    \label{fig:tikzschematischeoverview}
\end{figure}

In the \emph{quenched} regime, the network changes very slowly compared to the evolution of the epidemic. Quenched processes are well approximated with processes on static networks and therefore well studied over the last two decades. Indeed, as we will show in Section \ref{sec:Problemstatement}, the quenched regime assumes that, before each topology update, the epidemic has almost reached its equilibrium.

In the \emph{annealed} regime, the epidemic evolves very slowly compared to the network. The epidemic spreads as on an ``average'' network. General results for the annealed regime show that the annealed process shares attributes with the static process on the edge-average graph \cite{kohar2013emergence,valdano2018epidemic,zhang2017spectral}. Additional results in the annealed regime have been derived under the degree-based mean-field theory by Pastor-Satorras and Vespignani \cite{pastor2001epidemic,pastor2001epidemic2,schwarzkopf2010epidemic,li2012susceptible,devriendt2017unified}.  

The \emph{intermediate} regime, that resides between the quenched and annealed regimes, is difficult to analyse. However, it is considered to be the most important in real world scenarios \cite{holme2015modern,holme2014birth,leitch2019toward,perra2012activity,stehle2011simulation}. Diseases spread at a timescale comparable to the timescale of the movement of individuals.

In this work, we introduce the lower and upper-transition times $\mathrm{\underline{T}}(r)$ and $\mathrm{\overline{T}}(r)$ as the boundaries between the three timescale regimes in Fig. \ref{fig:tikzschematischeoverview}. In particular, we provide analytical bounds on the upper-transition time $\mathrm{\overline{T}}(r)$, which indicates whether the contact network can be considered approximately static. As a function of the accuracy tolerance $r$ these transition times indicate to which regime a process with a specific time between topology updates (inter-update time) $\Delta{t}$ belongs. The dependence on an accuracy tolerance $r$ is required, because the annealed regime for small inter-update times $\Delta{t}$ and the quenched regime for large inter-update times $\Delta{t}$ only describe the exact behaviour for $\Delta{t} \rightarrow 0$ and $\Delta{t} \rightarrow \infty$, respectively. Indeed, only approximately quenched and approximately annealed processes exist outside of these limits. The accuracy tolerance $r$ allows us to extend the quenched and annealed regimes to include, per definition, these approximately quenched and annealed processes. When the error due to the quenched or annealed approximation does not exceed the accuracy tolerance $r$, the process is considered quenched or annealed. These transition times are the boundaries of the intermediate regime (Fig. \ref{fig:tikzschematischeoverview}), when an error due to timescale separation of at most $r$ is allowed.  

Epidemics on time-varying networks are studied analytically in \cite{schwarzkopf2010epidemic,leitch2019toward,perra2012activity,pare2015stability,ogura2016stability,pare2017multi,pare2017epidemic} and results based on simulations and analysis of real-world data are found in \cite{stehle2011simulation,lieberman2005evolutionary,prakash2010virus,kotnis2013stochastic,ren2014epidemic,vestergaard2015temporal,nadini2018epidemic,guo2021impact,han2023impact}. However, to the best of the authors' knowledge, our quantification of the boundaries in Fig. \ref{fig:tikzschematischeoverview} is novel.

The paper is structured as follows. First, we briefly recall the N-Intertwined Mean-Field Approximation (NIMFA) SIS process \cite{VanMieghem2008Virusspreadinnetworks} in Section \ref{sec:MarkovianandNIMFASIS}. In Section \ref{sec:NIMFAontimevariantnetworks}, we present our extension of NIMFA to time-varying networks. In Section \ref{sec:Problemstatement}, we discuss the timescale regimes and timescale separation in more depth. In Section \ref{sec:timeregimesoftemporalnetworkepidemics}, we formally introduce the upper-transition time $\mathrm{\overline{T}}(r)$ and show numerical results on the upper-transition time $\mathrm{\overline{T}}(r)$. In Section \ref{sec:bounds}, we derive upper and lower bounds on the upper-transition time $\mathrm{\overline{T}}(r)$ and numerically compare them with results from Section \ref{sec:timeregimesoftemporalnetworkepidemics}. We conclude in Section \ref{sec:summary}.

\subsection{The Markovian and NIMFA SIS processes} \label{sec:MarkovianandNIMFASIS}
We consider the homogeneous continuous-time Markovian SIS process on a static network, represented by a simple undirected contact graph $G(N,L)$, with corresponding $N\times N$ symmetric adjacency matrix $A$, where $a_{ij} = 1$ if nodes $i$ and $j$ are connected and $a_{ij} = 0$ otherwise \cite{VanMieghem2010graphspectra}. A simple graph has no self-loops and thus $a_{ii} = 0$ for all nodes $i$. These requirements on the adjacency matrix hold for all graphs considered in this paper.

The Markovian SIS process has states specified by Bernoulli random variables $X_{i}\in \{0,1\}$ for each node $i$. If $X_{i}=1$ the node $i$ is \textit{infected}, else the node $i$ is \textit{healthy} but \textit{susceptible}. At a time $t$, the node $i$ is infected with probability $v_{i}(t) = \Pr[X_{i}(t)=1]$ and healthy with probability $1-v_{i}(t)$. We assume that the infection attempts from an infected node $i$ to a healthy node $j$ are Poisson processes with infection rates $\beta$. Each node also has a Poisson curing (or recovery) process with curing (or recovery) rate $\delta$. The effective infection rate is defined as $\tau = \frac{\beta}{\delta}$. The vector of all $v_i(t)$ is denoted as $V(t) = [v_1(t) \; v_2(t) \dots v_N(t)]^{T}$. We define the \textit{prevalence} $y(t)$, which is the average fraction of infected nodes:
\begin{equation}\label{eq:prevalance}
    y(t) = \frac{1}{N}\sum_{i=1}^{N}v_{i}(t).
\end{equation}
The SIS model exhibits a \textit{phase transition} at the epidemic threshold $\tau_c$. If $\tau < \tau_{c}$, the epidemic dies out exponentially fast \cite{ganesh2005effect}. If $\tau > \tau_c$, the epidemic lasts very long \footnote{Technically, there is a non-zero probability that the epidemic dies out fast for $\tau > \tau_c$ in the Markovian SIS model, because the probability of curing before infecting anyone is non-zero for any $\tau$.} \cite{VanMieghem2013decaytowardstheoverallhealthystate}. For any effective infection rate $\tau$ the epidemic will eventually die out \cite{vanmieghem2020explosive}, because the overall healthy state $V(t) = 0$ is the only steady state of the Markovian SIS process. The fact that the Markov state $X_i(t)$ is a Bernoulli random variable, for which $\E[X_i] = \Pr[X_i = 1]$ holds, leads to a differential equation for the infection probability of node $i$, first proposed in \cite{cator2012second}:
\begin{eqnarray}\label{eq:markovianSIS}
    \df{\E[X_{i}]}{t} = \E\bigg[-\delta X_i +\beta(1-X_i)\sum_{j=1}^{N}a_{ij}X_j\bigg] \nonumber\\
    = -\delta\E[X_i] + \beta \sum_{j=1}^{N}a_{ij}\E[X_j] - \beta \sum_{j=1}^{N}a_{ij}\E[X_i X_j].
\end{eqnarray}
The joint probability $\E[X_iX_j] = \Pr[X_i = 1, X_j = 1]$ is remarkably complicated \cite{cator2012second,van2014performance}. Instead, one can consider the N-Intertwined Mean-Field Approximation (NIMFA) \cite{VanMieghem2008Virusspreadinnetworks,VanMieghem2011Nintertwined} for the SIS model. NIMFA replaces in the Markovian SIS process the random variable $X_i$ with the expectation $\E[X_i]$ and $\E[X_iX_j]$ reduces to $\E[X_i]\E[X_j]$. Then, the governing NIMFA equations for a static network are given by the differential equations \cite{lajmanovich1976deterministic}
\begin{equation}\label{NIMFASIS}
    \df{v_{i}(t)}{t} = - \delta v_{i}(t) + \beta ( 1 - v_{i}(t))\sum_{j=1}^{N}a_{ij}v_{j}(t),  \quad i=1,2,\dots,N.
\end{equation}
The phase transition occurs in NIMFA SIS at the first-order NIMFA epidemic threshold $\tau_c^{(1)}= \frac{1}{\lambda_{1}(A)}$, where $\lambda_{1}(A)$ is the largest eigenvalue of the adjacency matrix $A$. Additionally, the NIMFA epidemic threshold lower bounds the Markovian SIS epidemic threshold $\tau_c^{(1)} < \tau_c$ for all networks \footnote{NIMFA upper bounds the infection probability in the Markovian SIS process \cite{VanMieghem2008Virusspreadinnetworks}. Specifically, when $\tau_c > \tau > \tau_{c}^{(1)}$ NIMFA will not die out, but a Markovian SIS process will die out fast.}. The NIMFA steady-state prevalence is denoted as $y_\infty$ and the infection probability of node $i$ in the steady state is denoted as $v_{\infty,i}$. NIMFA has a non-trivial steady state ($y_{\infty} \neq 0$) for $\tau > \tau_c^{(1)}$, which corresponds to the metastable (or quasi-stationary) state in the Markovian SIS process. Therefore, an analysis of the NIMFA steady state allows insights into the metastable state of the Markovian SIS process. When $V(0) \neq 0$ and $V(0) \neq V_{\infty} = [v_{\infty,1}(t) \; v_{\infty,2}(t) \dots v_{\infty,N}(t)]^{T}$, the NIMFA process will tend \cite{lajmanovich1976deterministic,khanafer2016stability},  for $t\rightarrow\infty$, to either the steady state $V_{\infty} = 0$ (if $\tau \leq \tau_c^{(1)}$) or to an upper bound for the Markovian SIS metastable state $V_{\infty} > 0$ (if $\tau > \tau_c^{(1)}$). Strictly speaking, the probability vector $V(t)=0$ is also a steady state for $\tau > \tau_c^{(1)}$. However, if $\tau > \tau_c^{(1)}$, then we only denote the non-negative vector $V_{\infty}>0$ as the steady state.

\subsection{NIMFA on time-variant networks}\label{sec:NIMFAontimevariantnetworks}
The topology of a time-variant network is represented at time $t$ by a simple undirected contact graph $G(t) = G(N,L(t))$. At all times $t$, the network has the same $N$ nodes, but the amount of links $L(t)$ may vary. The contact graph $G(t)$ is represented by its $N\times N$ symmetric adjacency matrix $A(t)$. 

We denote with $t_m$ the occurrence time of the $m$-th topology change. Within the interval $[t_{m-1}, t_m)$, with length $T_m = t_{m}-t_{m-1}$, the network remains static. 
We denote the graph $G(t)$ during the interval $t_{m-1}\leq t < t_{m}$ by $G_{m}$ and similarly the adjacency matrix $A(t) = A_m$ for $t_{m-1}\leq t < t_{m}$, with elements $a_{ij}(t) = (a_m)_{ij}$. Denoting the total amount of topologies by $M$, the complete interval $[t_{0},t_{M}]$ has exactly $M-1$ topology updates and the sequences of graphs and adjacency matrices are denoted by:
\begin{equation*}
    \mathcal{G} := \{G_{1},\dots,G_{M}\}, \quad
    \mathcal{A} := \{A_1,\dots,A_M\}.
\end{equation*}
We will call the times $\{t_0,\dots,t_{M}\}$ the \textit{update times} and the lengths of the intervals $[t_{m-1},t_{m})$, namely the set $\{T_1,\dots,T_M\}$, the \textit{inter-update times}. 
In a time-variant network, the NIMFA epidemic threshold equals $\tau_c^{(1)}(G_m) = \frac{1}{\lambda_1(A_{m})}$, which varies with the topology $m$. For each graph $G_m$ or adjacency matrix $A_{m}$ and for a fixed effective infection rate $\tau$, we define the basic reproduction number 
\begin{equation}\label{eq:basicrepnum}
(R_0)_m = R_0 (G_m,\tau)  := \frac{\tau}{\tau_c^{(1)}(G_m)} = \frac{\beta}{\delta}\lambda_1(A_{m}).    
\end{equation}
The phase transition coincides with $R_0 = 1$, which follows from substituting $\tau = \tau_c^{(1)}(G_m)$ in \eqref{eq:basicrepnum}. We say that $R_0(t) = (R_0)_m$ if $t_{m-1} \leq t < t_{m}$. The governing NIMFA equations for the entire time period $[t_0,t_{M}]$ for a node $i$ are given by:
\begin{widetext}
\begin{equation}\label{eq:nimfatemporal}
\arraycolsep=1.4pt\def\arraystretch{2.2}
\left\lbrace\begin{array}{ll}
\dfrac{\text{d}v_i(t)}{\text{d}t}=-\delta v_i(t) + \beta(1-v_i(t))\sum_{j=1}^{N} (a_1)_{ij} v_j(t), &\qquad t \in [t_0,t_1), \\
\qquad\vdots & \\
\dfrac{\text{d}v_i(t)}{\text{d}t}=-\delta v_i(t) + \beta(1-v_i(t))\sum_{j=1}^{N} (a_M)_{ij} v_j(t), &\qquad t \in [t_{M-1},t_M).
\end{array}\right.
\end{equation}
\end{widetext}
In terms of the probability vector $V(t) = [v_1(t) \; v_2(t) \dots v_N(t)]^T$, the matrix representation of (\ref{eq:nimfatemporal}) on each interval $[t_{m-1},t_m)$ is
\begin{equation}\label{sys_matrix}
    \dfrac{\text{d}V(t)}{\text{d}t} = (\beta A_m-\delta I)V(t) -\beta \text{diag}(V(t))A_mV(t),
\end{equation}
where $I$ denotes the $N\times N$ identity matrix and diag($x$) is the diagonal matrix with the elements $x_i$ of the $N\times1$ vector $x$ on the diagonal. Since the derivative of $v_i(t)$ exists on the entire interval $[t_0,t_M)$, we know that $v_{i}(t)$ is continuous on $[t_0,t_M)$. The starting conditions at each update time $v_i(t_m)$ are the limits $v_i(t_m - \varepsilon)$ as $\varepsilon \to 0^+$. It follows from (\ref{eq:nimfatemporal}) that the derivative is, in general, discontinuous at $t_m$: $$
\df{v_{i}}{t}\Big|_{t=t_m - \varepsilon} \neq \df{v_{i}}{t}\Big|_{t=t_m},
$$ 
in the limit $\varepsilon \to 0^+$ if $A_{m} \neq A_{m+1}$.

We will, without loss of generality, take $\delta = 1$, because for any $\delta > 0$, we can rescale the time variable in units of the average curing time $\frac{1}{\delta}$. Indeed, we replace the time $t$ with $\theta = \delta t$ in (\ref{NIMFASIS}), while keeping the rates the same in the new time units:
\begin{equation}\label{NIMFASISdelta1}
    \df{v_i}{\theta} =  -v_i(\theta)+ \tau(1-v_{i}(\theta))\sum_{j=1}^{N}a_{ij}v_j(\theta),
\end{equation}
which are the ``rescaled'' NIMFA governing equations. The same method can be applied to the system (\ref{eq:nimfatemporal}), where an equivalent system with $\delta =1$ is found. The intervals $[t_{m-1},t_m)$ of the ``rescaled'' system are measured in units of the average curing time $\frac{1}{\delta}$. In the following, we will use \eqref{NIMFASISdelta1} instead of \eqref{NIMFASIS} and we write the dimensionless time $t$ instead of $\theta$ when using \eqref{NIMFASISdelta1} for clarity. In the rescaled NIMFA governing equations \eqref{NIMFASISdelta1}, the effective infection rate $\tau$ equals the infection rate $\beta$ because $\delta = 1$.

\subsection{Timescale separation and the transition times}\label{sec:Problemstatement} 
In this section, we explore the interplay between the timescale of the epidemic process and the timescale of the topology updating process. We assume at first that the inter-update times $T_m$ are constant and equal to $\Delta{t}$. The timescales of the epidemic process are characterized by the average infection time $\frac{1}{\beta}$ between infection attempts on links and the average curing time $\frac{1}{\delta}$. Fig. \ref{fig:timeregimesplot} shows the prevalence $y(t)$ of three temporal NIMFA processes, that correspond to the three regimes in Fig. \ref{fig:tikzschematischeoverview}. Three processes with the same infection rate $\beta$, the same curing rate $\delta$ and random Erd\H{o}s-R\'enyi \footnote{An Erd\H{o}s-R\'enyi random graph (ER graph) $G_p(N)$ is characterized by the link between each pair of the $N$ nodes existing with probability $p$, independent of any other link (see, e.g. \cite{van2014performance}).} contact graphs $G_m$ with the same distribution, but with different inter-update times $\Delta{t}$ are illustrated. The solid red line in Fig. \ref{fig:timeregimesplot} shows the averaging behavior of the annealed regime. The dotted blue line illustrates the convergence to equilibrium on each network topology of the quenched regime. The dashed black line shows the irregular process of the intermediate regime.

\begin{figure*}[ht]
    \centering
    \includegraphics[width=\textwidth]{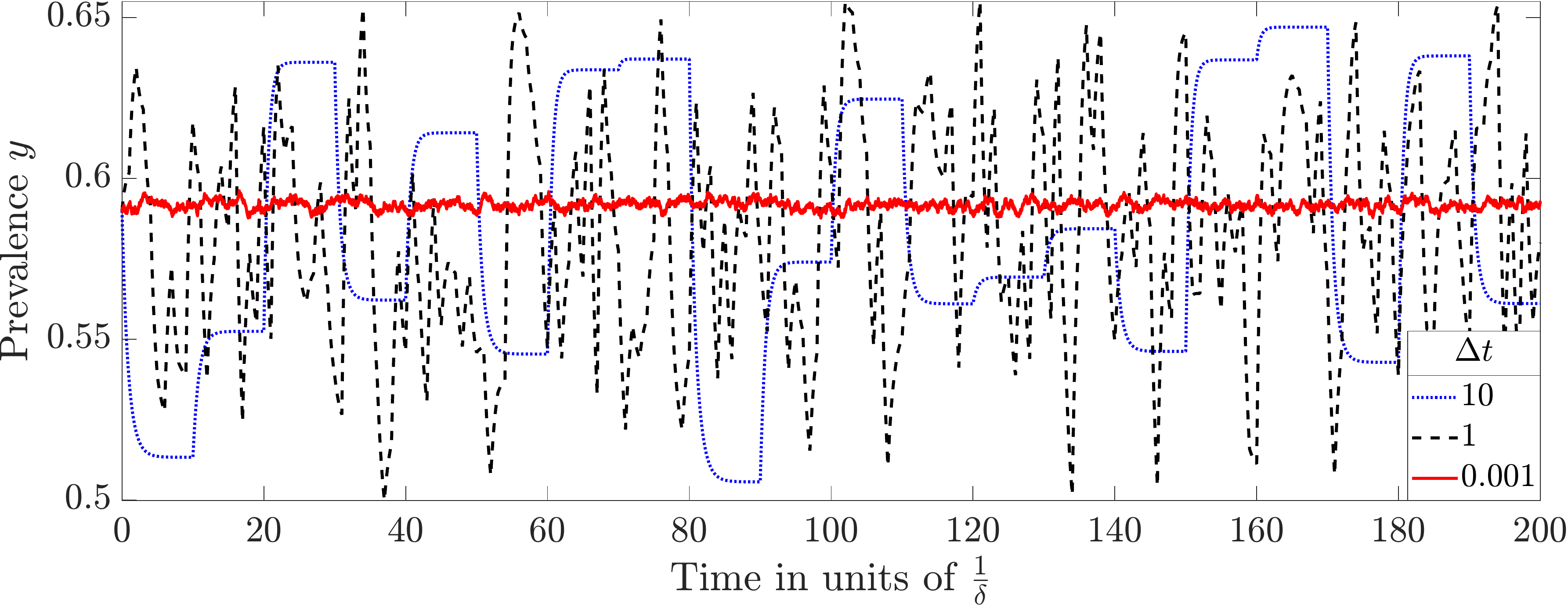}
    \caption{NIMFA SIS on a time-varying network for different inter-update times $\Delta{t}$. Parameters: graph size $N =50$; infection rate $\beta =0.1$; curing rate $\delta = 1$. The time-varying network is given by a sequence of Erd\H{o}s-R\'enyi graphs, where the link density $p$ is chosen uniformly at random from the interval $[0.4,0.6]$. The solid red line (small inter-update time $\Delta{t}=0.001$) shows the averaging behavior when nearing the annealed regime. The dashed black line (medium inter-update time $\Delta{t}=1$) shows the irregularity of the intermediate regime. The dotted blue line (large inter-update time $\Delta{t}=10$) shows the convergence on each topology from the quenched regime. }
    \label{fig:timeregimesplot}
\end{figure*}

We introduce the transition times $\mathrm{\underline{T}}(r)$ and $\mathrm{\overline{T}}(r)$ as functions of a graph $G$, the infection rate $\beta$, the curing rate $\delta$ and the accuracy tolerance $r$. The transition times or the boundaries of the regimes $\mathrm{\underline{T}}(r)$ and $\mathrm{\overline{T}}(r)$ are only meaningful as a function of the accuracy tolerance $r$. The accuracy tolerance $r$ identifies temporal processes that can be approximated well with quenched or annealed processes. Here, the accuracy tolerance $r$ can be lowered to increase the accuracy of the approximation. Specifically, we classify any process which has $T_m \leq \mathrm{\underline{T}}(r;G_m)$ for all $m=1,\dots,M$ as annealed and any process which has $T_m \geq \mathrm{\overline{T}}(r;G_m)$ for all $m=1,\dots,M$ as quenched. In this work, we restrict ourselves to an analysis of the upper-transition time $\mathrm{\overline{T}}(r)$. We will also restrict ourselves to NIMFA SIS. We argue that our results using NIMFA are a good approximation of the physical Markovian SIS model. We compare the NIMFA prevalence to the expected/average prevalence of a Markovian SIS process \footnote{The Markovian SIS process average is conditioned on non-extinction} with the same parameters on the same sequence of graphs at different inter-update times $\Delta t$, which is shown in Fig. \ref{fig:markovnimfacomparison}. Since the processes are similar for all inter-update times, we claim that our mean-field results are representative of Markovian SIS epidemics on temporal networks.

\begin{figure*}[ht]
    \centering
    \begin{subfigure}[]{0.32\textwidth}
        \centering
        \includegraphics[width=\linewidth]{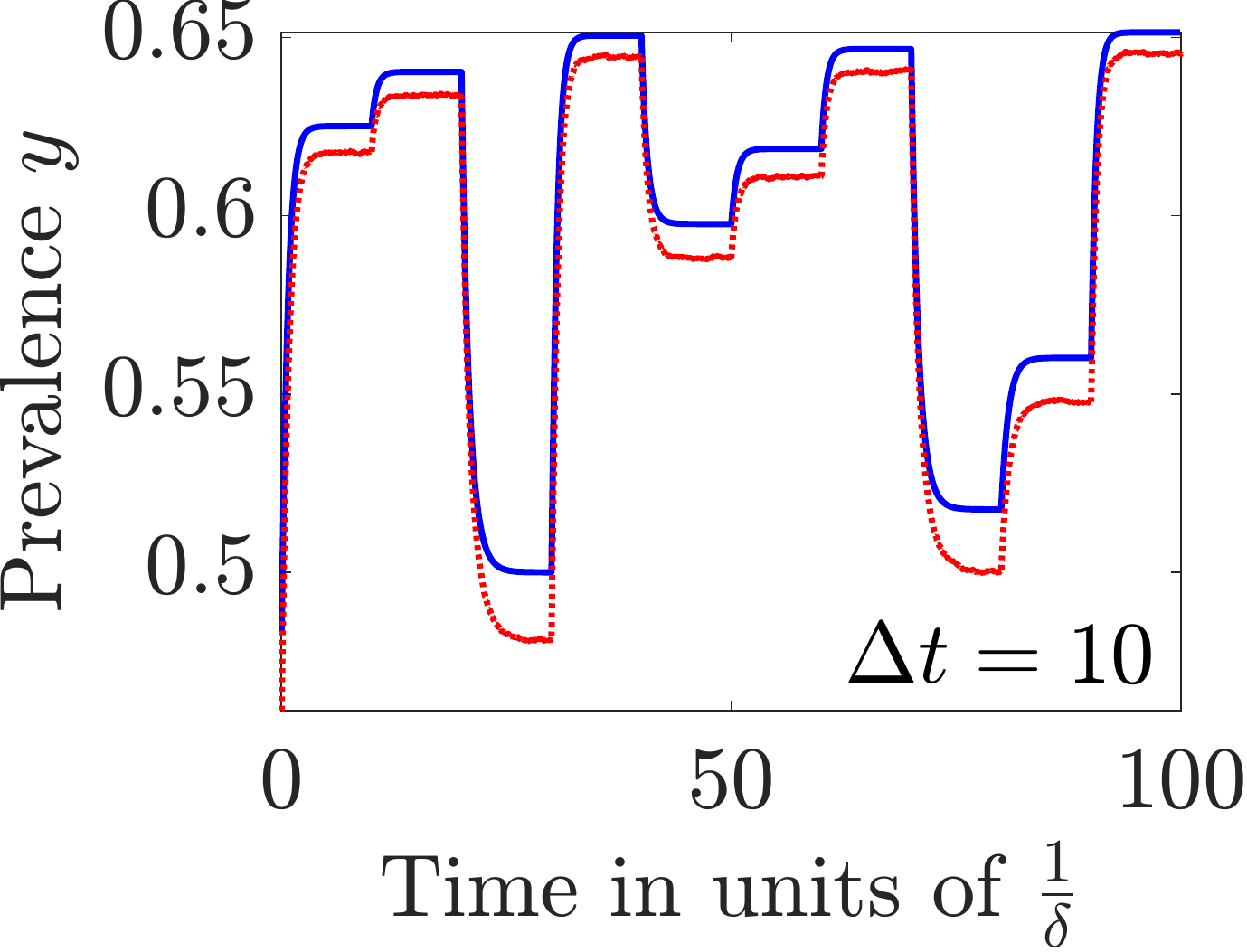} 
    \end{subfigure}
    \begin{subfigure}[]{0.32\textwidth}
        \centering
        \includegraphics[width=\linewidth]{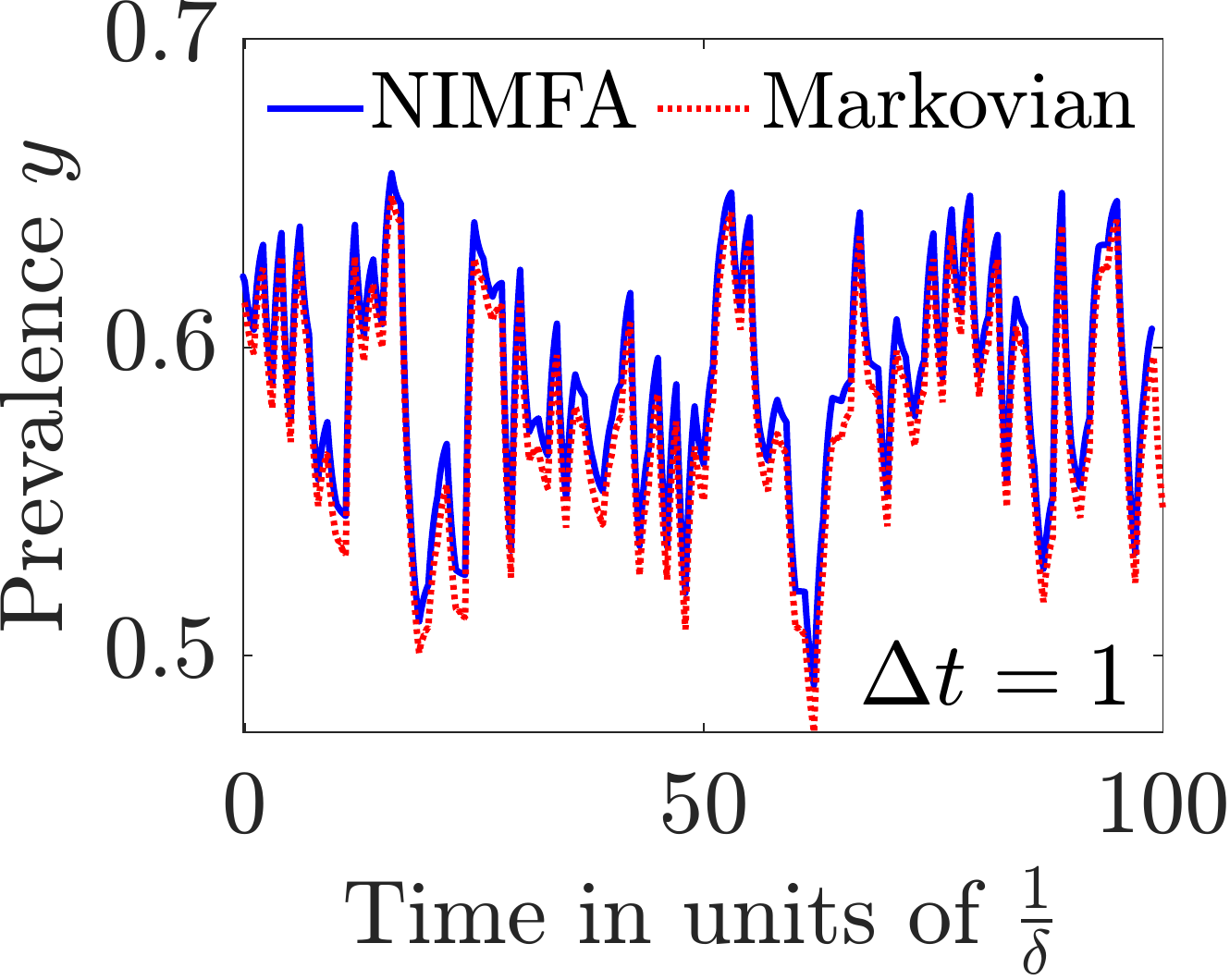} 
    \end{subfigure}
    \begin{subfigure}[]{0.32\textwidth}
        \centering
        \includegraphics[width=\linewidth]{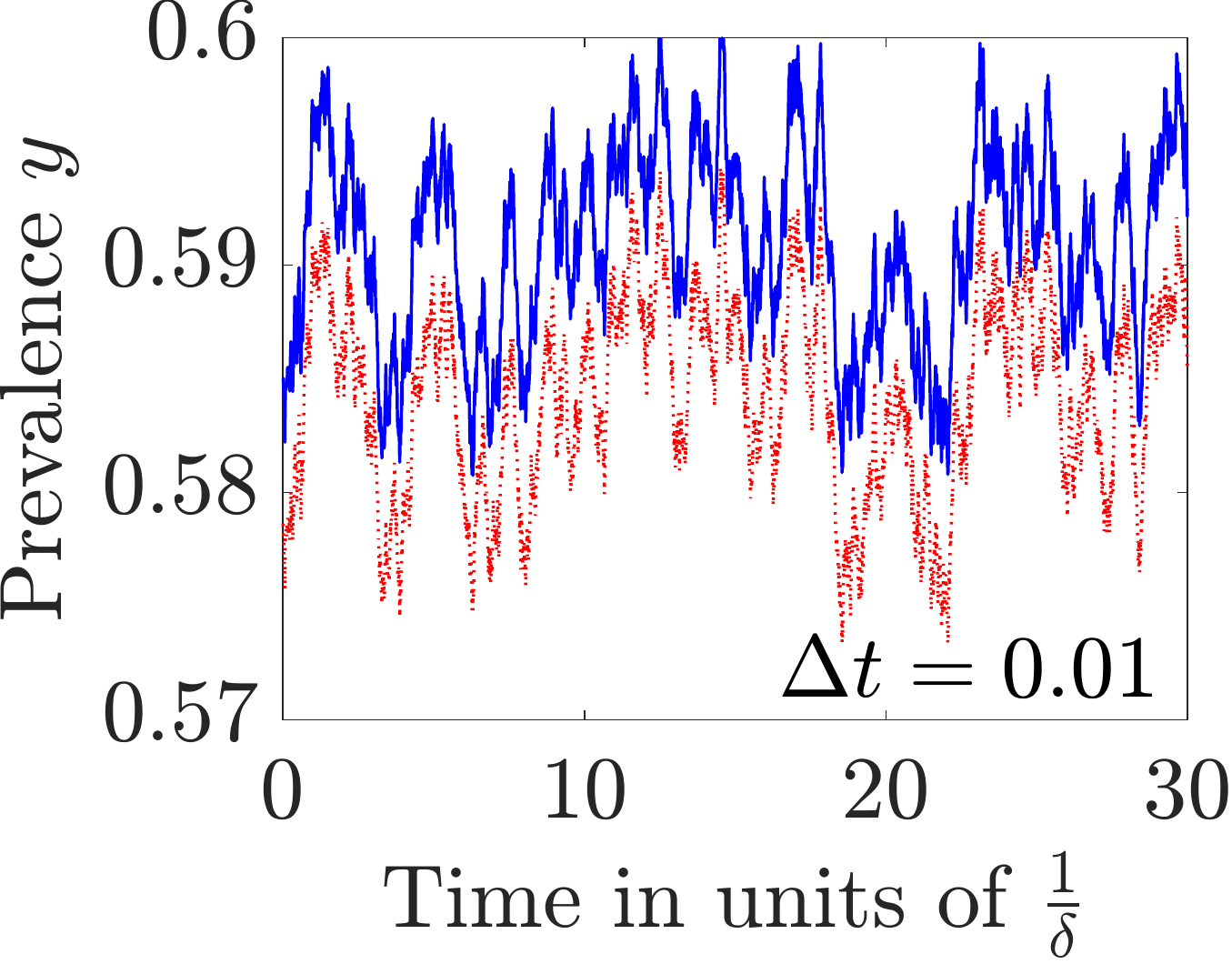} 
    \end{subfigure}
    \caption{Comparison of NIMFA SIS and Markovian SIS on time-varying networks for different inter-update times $\Delta{t}$. Left: $\Delta{t}=10$, Middle: $\Delta{t}=1$, Right: $\Delta{t}=0.01$. The solid blue line corresponds to the NIMFA process and the dashed red line to the Markovian process.  Parameters are the same for NIMFA and Markovian. The infection rate $\beta =0.1$ and curing rate $\delta = 1$. The time-varying network is given by a sequence of Erd\H{o}s-R\'enyi graphs, with $N=50$ and link density $p$ picked uniformly at random from the interval $[0.4,0.6]$. The graph sequence for NIMFA and Markovian SIS are the same in each sub-figure, but the sequences are different between sub-figures.}
    \label{fig:markovnimfacomparison}
\end{figure*}

In order to define a well-posed problem, we will make the following simplifying assumptions:

\begin{enumerate}
    \item The number of nodes $N$ and the number of topologies $M$ are fixed and finite.
    \item We consider the mean-field NIMFA SIS process instead of the Markovian SIS process. 
    \item We confine ourselves to homogeneous SIS with link infection rates $\beta_{ij} = \beta$ for all links $i\sim j$ and nodal curing rates $\delta_i = \delta$ for all nodes $i$.
    \item In our numerical analysis all graphs $G_m$ are Erd\H{o}s-R\'enyi graphs $G(N,p)$, where the link density $p$ is a uniformly distributed random variable. We do not require that the ER-graphs are connected, because real-world time-varying contact networks can be disconnected. Appendix \ref{app:assumptionofERgraphs} discusses whether ER-graphs are representative of general graphs. In more realistic temporal networks, the graphs would plausibly change less at each update time, which would result in shorter transition times. Investigating specific temporal processes is beyond the scope of this paper.
    \item We have simulated a range of values of the number of nodes $N$ and infection rate $\beta$. With the exception of some disconnected graphs, the behaviour of the transition time was not influenced significantly by either $N$ or $\beta$ in these simulations. Graphs with similar basic reproduction number $R_0$ will have similar upper-transition times, even if $N$ and $\beta$ differ. However, for large values of $N$, ER-graphs become increasingly more regular. Since changing $N$ and $\beta$ does not influence the results for a given $R_0$, all simulations shown in this work have $N=50$ nodes and infection rate $\beta=0.1$; where $\beta$ is chosen to have a good range of possible $R_0$ values. The analytical bounds hold for general $N$ and $\beta$. 

\end{enumerate}

\section{The upper-transition time} \label{sec:timeregimesoftemporalnetworkepidemics}

The temporal NIMFA SIS process gains a Markov-like property when the inter-update times $T_m$ tend to infinity. When the inter-update times $T_m$ increase, the graphs $\{G_k\}_{k<m}$ have a decreasing influence on the state vector at the next update time $V(t_m)$. When the influence of the graphs $\{G_k\}_{k<m}$ becomes negligible, the process becomes approximately memoryless to the network updates: the current graph $G_m$ determines the probability vector at the next network update $V(t_m)$ almost completely. The Markov-like memorylessness arises from the fact that, in the quenched limit of $T_m \to \infty$, the static NIMFA SIS process will tend to its steady-state vector $V_{\infty}$ from any starting point $V(0)\neq 0$. Therefore, when $T_m$ increases, the difference between the state vector $V(t_{m})$ at the end of the interval $[t_{m-1},t_{m})$ and the steady-state vector $V_{\infty}$ decreases for all $V(t_{m-1})$, when $V(t_{m-1}) \neq 0$. For large inter-update times $T_m$, the state vector at the next update time $V(t_m) \approx V_{\infty}(G_m)$ (which implies $y(t_m) \approx y_{\infty}(G_m)$), independent of all previous graphs $\{G_k\}_{k<m}$. In Fig. \ref{fig:TemporalMarkovProperty}, it is shown that the prevalence at the end of the second interval $y(t_2)$ is approximately independent of the first graph $G_1$ and depends (almost) solely on $G_2$. \\

Since the state vector $V(t)$ for $t \in [t_m,t_{m+1})$ is completely determined by the starting state $V(t_{m})$ and the graph $G_{m+1}$, the memorylessness of the state vector $V(t_m)$ extends to the entire interval $[t_m,t_{m+1})$. 
When $T_{m-1}$ tends to infinity, $V(t_{m-1})$ is determined only by the graph $G_{m-1}$. Therefore, the state vector $V(t)$ and prevalence $y(t)$ on the interval $[t_{m-1}, t_m)$ are fully determined by the graphs $G_{m-1}$ and $G_m$. Similar to the starting state $V(t_m)$, the state vector $V(t)$ on the interval $[t_{m-1}, t_m)$ is approximately memoryless for large inter-update times $T_m$. Fig. \ref{fig:TemporalMarkovProperty} shows the Markov-like memorylessness of the prevalence $y(t)$ on the interval $[t_2, t_3)$, which is (almost) independent of the first graph $G_1$.\\

\begin{figure}[ht]
    \centering
    \includegraphics[width=0.45\textwidth]{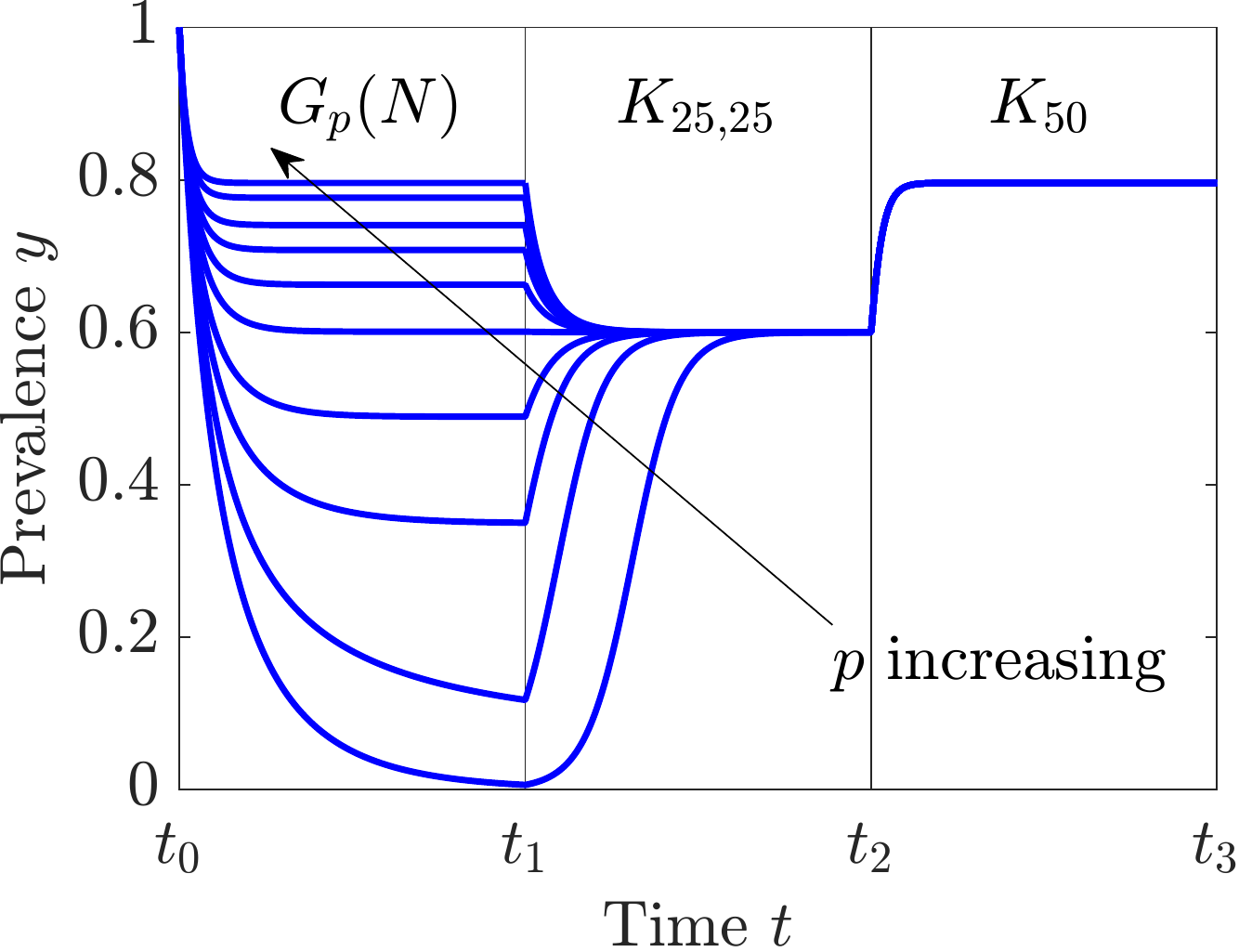}
    \caption{Ten different NIMFA SIS processes on time-varying networks with long inter-update times (i.e. $t_k - t_{k-1}$ is large). The first graph $G_1$ on the interval $[t_0, t_1)$, is different for each process: the different graphs are ER graphs with $N=50$ and link density $p=\{0.1,0.2,0.3,0.4,0.5,0.6,0.7,0.8.0.9,1\}$. The second graph  $G_2$ on the interval $[t_1,t_2)$ is the same for each process and is the complete bipartite graph $K_{25,25}$. The final graph $G_3$ on the interval $[t_2,t_3)$ is also the same for each process and is the complete graph on $50$ nodes $K_{50}$. The graph $G_1$ has visibly negligible influence on the interval $[t_2, t_3)$ when $G_2$ and $G_3$ are fixed due to the Markov-like memorylessness property with respect to previous graphs that defines the quenched regime.}
    \label{fig:TemporalMarkovProperty}
\end{figure}

When $(R_0)_m > 1$ in the quenched regime, the steady-state vector $V_{\infty}(G_m)$ approximates the infection probability vector $V(t)$ at the end of the $m$-th interval $V(t_m)$. The case of $(R_0)_m \leq 1$ should be treated carefully, because the steady state $V_{\infty} = 0$ describes an epidemic which is extinct. Hence, when $(R_0)_m<1$ in the quenched regime, we should approximate the entries of the infection probability vector at the topology update as $V_i(t_{m}) \geq \epsilon$, with a small $0<\varepsilon \ll 1$, because in NIMFA $V(t) = 0$ is only actually reached in the limit as $t\to \infty$ and would mean that the epidemic process stops.

The Markov-like attributes that appear at large inter-update times $T_m$ are what quantify a process as ``quenched''. We investigate whether we can specify the inter-update time $T_m = \mathrm{\overline{T}}(r)$, as a function of the effective infection rate $\tau$, the graph $G_m$ and an accuracy tolerance $r$, such that for $T_m \geq \mathrm{\overline{T}}(r,G_m,\tau)$, the NIMFA SIS process will reach the steady-state prevalence $y_{\infty}$ on the interval $[t_{m-1},t_m)$ with an error $|y_{\infty}- y(t_m)| \leq r$, i.e. at most the accuracy tolerance $r$. Then, at the update time $t_m$, the process has converged and has lost its dependence on the initial condition with respect to the accuracy tolerance $r$. The time $\mathrm{\overline{T}}(r,G_m,\tau)$ is called the \emph{upper-transition time}. In short, we write $\mathrm{\overline{T}}(r) = \mathrm{\overline{T}}(r,G_m,\tau)$.  \\
An intuitive first definition of the upper-transition time $\mathrm{\overline{T}}(r)$ is the smallest time $t$ such that the error $|y(t) - y_{\infty}|\leq r$ for all starting values $v_i(0) \in (0,1]$ for all nodes $i$. This definition implies that the steady state is reached up to the accuracy tolerance $r$. However, for small starting infection probability vectors $||V(0)||_1 = \varepsilon \ll 1$ and graphs $G$ with basic reproduction number $R_0(G,\tau) > 1$, the time of convergence to reach $|y(t) - y_{\infty}| \leq r$ can be made arbitrarily long by selecting a sufficiently small $\varepsilon$, due to Lemma \ref{lemma:arbitrarilyslowfromzero}, which is proven in Appendix \ref{app:lemmaproof}. 
\begin{lemma}\label{lemma:arbitrarilyslowfromzero}
    Given a graph $G$, an effective infection rate $\tau > \tau_c^{(1)}$, an arbitrary large time $\mathcal{T}$ and an accuracy tolerance $r<y_{\infty}$, there exists a starting infection probability vector $V(0) = \varepsilon(r) u > 0$, where $u$ is the all-one vector, such that $|y(t)-y_{\infty}| > r$ 
 for all $t\leq \mathcal{T}$. 
\end{lemma}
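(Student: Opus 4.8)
The plan is to exploit the fact that near the all-healthy state $V=0$ the quadratic term in NIMFA is negligible, so the growth of the epidemic is controlled by the linearization $\dot V = (\tau A - I)V$. Since $\tau > \tau_c^{(1)} = 1/\lambda_1(A)$, the symmetric matrix $\tau A - I$ has largest eigenvalue $\tau\lambda_1(A) - 1 > 0$, so from a small uniform seed $V(0) = \varepsilon u$ the prevalence can grow no faster than $e^{(\tau\lambda_1(A)-1)t}$. Concretely, I will prove the pointwise upper bound $y(t) \le \varepsilon\, e^{(\tau\lambda_1(A)-1)t}$ valid for all $t \ge 0$, and then pick $\varepsilon$ so small that the right-hand side stays below $y_\infty - r$ throughout $[0,\mathcal{T}]$. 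This forces $y_\infty - y(t) > r$, hence $|y(t)-y_\infty| > r$, for every $t \le \mathcal{T}$, which is exactly the claim (recall $r < y_\infty$ guarantees $y_\infty - r > 0$).

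The first step is to dominate the NIMFA flow by the linear flow. Let $W(t) := e^{(\tau A - I)t}V(0)$ solve the linearized system with $W(0)=V(0)=\varepsilon u$. Writing \eqref{sys_matrix} with $\delta=1$ as $\dot V = (\tau A - I)V - \tau\,\mathrm{diag}(V)\,A\,V$ and treating the quadratic term as a known forcing along the \emph{true} NIMFA trajectory $V(s)$, variation of constants gives the exact identity
\begin{equation*}
W(t) - V(t) = \int_0^t e^{(\tau A - I)(t-s)}\,\tau\,\mathrm{diag}(V(s))\,A\,V(s)\,ds .
\end{equation*}
Here I will use two structural facts: (i) the positive orthant is forward invariant for NIMFA, so $V(s)\ge 0$ entrywise for all $s\ge 0$ (in fact $V(s)\in[0,1]^N$, as established in \cite{lajmanovich1976deterministic,khanafer2016stability}), whence $\mathrm{diag}(V(s))\,A\,V(s)\ge 0$; and (ii) $\tau A - I$ is a Metzler matrix (its off-diagonal entries $\tau a_{ij}$ are nonnegative), so $e^{(\tau A - I)t}$ is entrywise nonnegative for $t\ge 0$. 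The integrand is therefore nonnegative, yielding the componentwise domination $V(t)\le W(t)$ for all $t\ge 0$.

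The second step is a spectral bound on $W(t)$. Since $A$ is symmetric, I diagonalize $A = \sum_{k=1}^N \lambda_k x_k x_k^{T}$ in an orthonormal eigenbasis with $\lambda_1 \ge \dots \ge \lambda_N$, so that $e^{(\tau A - I)t} = \sum_k e^{(\tau\lambda_k - 1)t}x_k x_k^{T}$. Averaging $W(t) = \varepsilon\, e^{(\tau A - I)t}u$ over the nodes and using $\lambda_k \le \lambda_1$ together with $\sum_k (u^{T}x_k)^2 = \|u\|_2^2 = N$ gives
\begin{equation*}
y(t) \le \tfrac{1}{N}u^{T}W(t) = \tfrac{\varepsilon}{N}\sum_{k=1}^N e^{(\tau\lambda_k - 1)t}(u^{T}x_k)^2 \le \varepsilon\, e^{(\tau\lambda_1(A) - 1)t}.
\end{equation*}
Because $\tau\lambda_1(A) - 1 > 0$, the right-hand side is increasing, so its maximum over $[0,\mathcal{T}]$ is at $t=\mathcal{T}$. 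Choosing $\varepsilon(r) = \tfrac12 (y_\infty - r)\, e^{-(\tau\lambda_1(A) - 1)\mathcal{T}} > 0$ then gives $y(t) \le \tfrac12 (y_\infty - r) < y_\infty - r$ for all $t\le \mathcal{T}$, and the statement follows with $V(0) = \varepsilon(r)\,u > 0$.

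The only genuinely delicate point is the first step: one must dominate the quadratic NIMFA term rigorously rather than merely ``neglecting'' it. I expect the Metzler/variation-of-constants argument above to be the cleanest route, since it turns the intuitive statement that the nonlinearity only slows growth into the exact identity for $W-V$ with a manifestly nonnegative integrand; the supporting invariance $V(s)\in[0,1]^N$ is standard for NIMFA and can be cited. An alternative would be to invoke a comparison (Kamke--M\"uller) theorem for quasimonotone cooperative systems directly, observing that the linear field is cooperative and dominates the NIMFA field on the positive orthant. Both approaches hinge on the same sign structure, so either will close the argument.
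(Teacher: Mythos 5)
Your proposal is correct, and it closes the argument by the same overall strategy as the paper: establish an exponential upper bound $y(t)\le y(0)e^{\alpha t}$ on the prevalence, then choose the seed $\varepsilon(r)$ of order $(y_\infty-r)e^{-\alpha\mathcal{T}}$ so that the bound stays below $y_\infty-r$ on $[0,\mathcal{T}]$, forcing $y_\infty-y(t)>r$. Where you differ is in how the exponential bound is obtained, and your route is genuinely sharper. The paper works directly with the scalar ODE for $y(t)$, applies the crude estimates $(1-v_i)\le 1$ and $a_{ij}\le 1$, and invokes Gr\"onwall, which yields the rate $\alpha=\tau N-1$. You instead dominate the full vector flow by the linearization via Duhamel's formula, using that $\tau A-I$ is Metzler (so $e^{(\tau A-I)t}\ge 0$ entrywise) and that $[0,1]^N$ is forward invariant (so the quadratic term is a nonnegative correction), and then bound $u^{T}e^{(\tau A-I)t}u$ spectrally; this yields the natural rate $\alpha=\tau\lambda_1(A)-1=R_0-1$, which is smaller than $\tau N-1$ and hence permits a larger admissible $\varepsilon(r)$. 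Your technique is in fact the one the paper itself uses elsewhere: the inequality $V(t)\le e^{(\tau A-I)t}V(0)$ and the consequent prevalence bound appear in the proof of Lemma \ref{lemma:secondupperbound} (see \eqref{eq:tussenstapproofupperbound2}--\eqref{eq:prevalencebound}) and underlie the growth lower bound \eqref{eq:lowerboundgrowth}, whereas there the comparison is cited rather than proved; your Duhamel/Metzler argument supplies a self-contained proof of that comparison. The trade-off is that the paper's proof of this particular lemma is shorter and entirely elementary, while yours carries heavier machinery but produces the tight, graph-dependent rate and a reusable domination lemma. One cosmetic advantage of your version: the safety factor $\tfrac12$ in $\varepsilon(r)$ makes the strict inequality $|y(t)-y_\infty|>r$ immediate, whereas the paper leans on a strict differential inequality (justified by $a_{ii}=0$) that degenerates to equality at $t=0$.
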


 To ensure that the upper-transition time $\mathrm{\overline{T}}(r)$ is finite, we bound the infection probabilities $v_i(0) \geq r$ with the accuracy tolerance $r$ and define the upper-transition time $\mathrm{\overline{T}}(r)$ for a fixed effective infection rate $\tau$ and a fixed graph $G$ as 
\begin{equation}\label{eq:deftbar}
\mathrm{\overline{T}}(r) = \text{argmin}_{t \geq 0} \left\{|y(t) - y_{\infty}| \leq r \; \forall V(0) \geq ru \right\},
\end{equation} 
where $y(t)$ indicates the NIMFA SIS prevalence on the graph $G$ and we write $\forall V(0) \geq ru$ as shorthand for ``for all $V(0)$ such that $v_{i}(0)\in [r,1]$ for all nodes $i$ in $G$''. The definition of the upper-transition time $\mathrm{\overline{T}}(r)$ in \eqref{eq:deftbar} ensures that the steady state $y_{\infty}$ is reached by the time $t=\mathrm{\overline{T}}(r)$, provided that the initial state $V(0)$ is not too small and satisfies $V(0) \geq ru$. To stress the dependence on the underlying, fixed graph $G_m$, we also denote the upper-transition time as $\mathrm{\overline{T}}(r, G_m)$. \\
\\

For NIMFA on temporal networks, we emphasise that $T_m \geq \mathrm{\overline{T}}(r, G_m)$ for all graphs $G_m \in \mathcal{G}$ does \emph{not} imply that $|y(t_m) - y_{\infty}| < r$ for every graph $G_m$, even if the initial state satisfies $V(0) \geq r u$. The underlying reason is that $V(0) \geq ru$ does not imply $V(t_m) \geq ru$ for all graphs $G_m$. For instance, the graph $G_1$ may correspond to a basic reproduction number $(R_0)_1 \leq 1$, due to which the viral state $V(t)$ converges to $0$ as $t \rightarrow \infty$. Hence, when the graph $G_1$ changes to $G_2$ at time $t_1$, it is possible that the viral state vector obeys $V(t_1) < r u$. Then the prevalence $y(t)$ of the temporal process decreases below $r$ and the prevalence at each following graph update $y(t_m)\Big|_{t_m > t}$ is no longer guaranteed to be within the accuracy tolerance $r$ around the steady-state prevalence $y_{\infty}$ even though $T_m \geq \mathrm{\overline{T}}(r,G_m)$ for all graphs $G_m \in \mathcal{G}$ (i.e. even though we are in the approximately quenched regime). While it would be preferable that $T_m \geq \mathrm{\overline{T}}(r,G_m)$ for all graphs $G_m \in \mathcal{G}$ implies $|y(t_{m}) - y_{\infty}| \leq r$ for all topologies $m$ and all starting infection probability vectors $V(0)$, Lemma \ref{lemma:arbitrarilyslowfromzero} shows that a lower bound on the starting infection probabilities $v_i(0)$ is necessary for the upper-transition time $\mathrm{\overline{T}}(r)$ to be finite. We argue that not considering processes where the prevalence $y(t)$ has dropped below $y(t)=r$ is a reasonable choice when interpreting $y(t) \leq r$ as a die-out. NIMFA has no actual die-out in finite time, a characteristic that can be considered nonphysical or unrealistic. Since NIMFA SIS is an approximation of Markovian SIS conditioned on non-extinction in the first place, we make the assumption that no ``NIMFA die-outs'' occur.\\

In the following, we will assume that $T_1 = T_2 = \dots = T_M = \Delta t$. Only in this specific case, the requirement $T_m \geq \mathrm{\overline{T}}(r,G_m)$ for all graphs $G_m \in \mathcal{G}$ becomes $\Delta t \geq \max_{G_m \in \mathcal{G}}(\mathrm{\overline{T}}(r,G_m))$.  The upper-transition time $\mathrm{\overline{T}}(r,\mathcal{G})$ of the graph sequence $\mathcal{G}$ can now be defined as $\max_{G_m \in \mathcal{G}}(\mathrm{\overline{T}}(r,G_m))$.

\subsection{Numerical results}
The upper-transition time  $\mathrm{\overline{T}}(r)$ can only be determined approximately, because there is no simple closed formula for the steady-state prevalence $y_{\infty}$ for general graphs. Therefore, we numerically solve the NIMFA equations \eqref{NIMFASISdelta1} to calculate the steady state prevalence $y_{\infty}$. We investigate an heuristic for the upper-transition time $\mathrm{\overline{T}}(r)$ that does not require $y_{\infty}$ in Appendix \ref{app:derivativeconvegencetime}.

Fig. \ref{fig:tbarzoomed} shows the numerical approximation of the upper-transition time $\mathrm{\overline{T}}(r)$, with accuracy tolerance $r = 10^{-4}$, for different values of the basic reproduction number $R_0$ and the starting prevalence $y(0)$. Specifically, each node has the same starting infection probability $v_i(0) = y(0)$, for different values of $y(0)$. The sharp lines in Fig. \ref{fig:tbarzoomed} show that the upper-transition time $\mathrm{\overline{T}}(r)$ is almost fully determined for each graph by the basic reproduction number $R_0$. Additionally, Fig. \ref{fig:tbarzoomed} illustrates the asymptotic behavior around the epidemic threshold at $R_0 = 1$. The asymptote splits the figure in two regimes: $R_0 \leq 1$ and $R_0 > 1$. Fig. \ref{fig:tbarzoomed} shows that, if $R_0 \leq 1$, then the curves (from top to bottom) are in decreasing order of $y(0)$. Since, with a higher initial prevalence $y(0)$, the process must decay more to reach the threshold of $|y(t) - y_{\infty}| = y(t) <r$. 
Therefore, Fig. \ref{fig:tbarzoomed} shows that the upper-transition time $\mathrm{\overline{T}}(r)$ is zero for $y(0) = 10^{-4}$ (blue line) below the epidemic threshold. For $R_0 > 1$, the curves (from top to bottom) are in increasing order of $y(0)$, if we ignore the dips. The inset in Fig. \ref{fig:tbarzoomed} shows the dips below the curves in more detail. The dips occur when the starting prevalence $y(0)$ is close to the steady-state prevalence $y_{\infty}$ for the corresponding values of the basic reproduction number $R_0$. The upper-transition time $\mathrm{\overline{T}}(r)$ is small, because the initial difference $|y(0)-y_{\infty}|$ is small.

\begin{figure*}[ht]
    \centering
    \includegraphics[width=\textwidth]{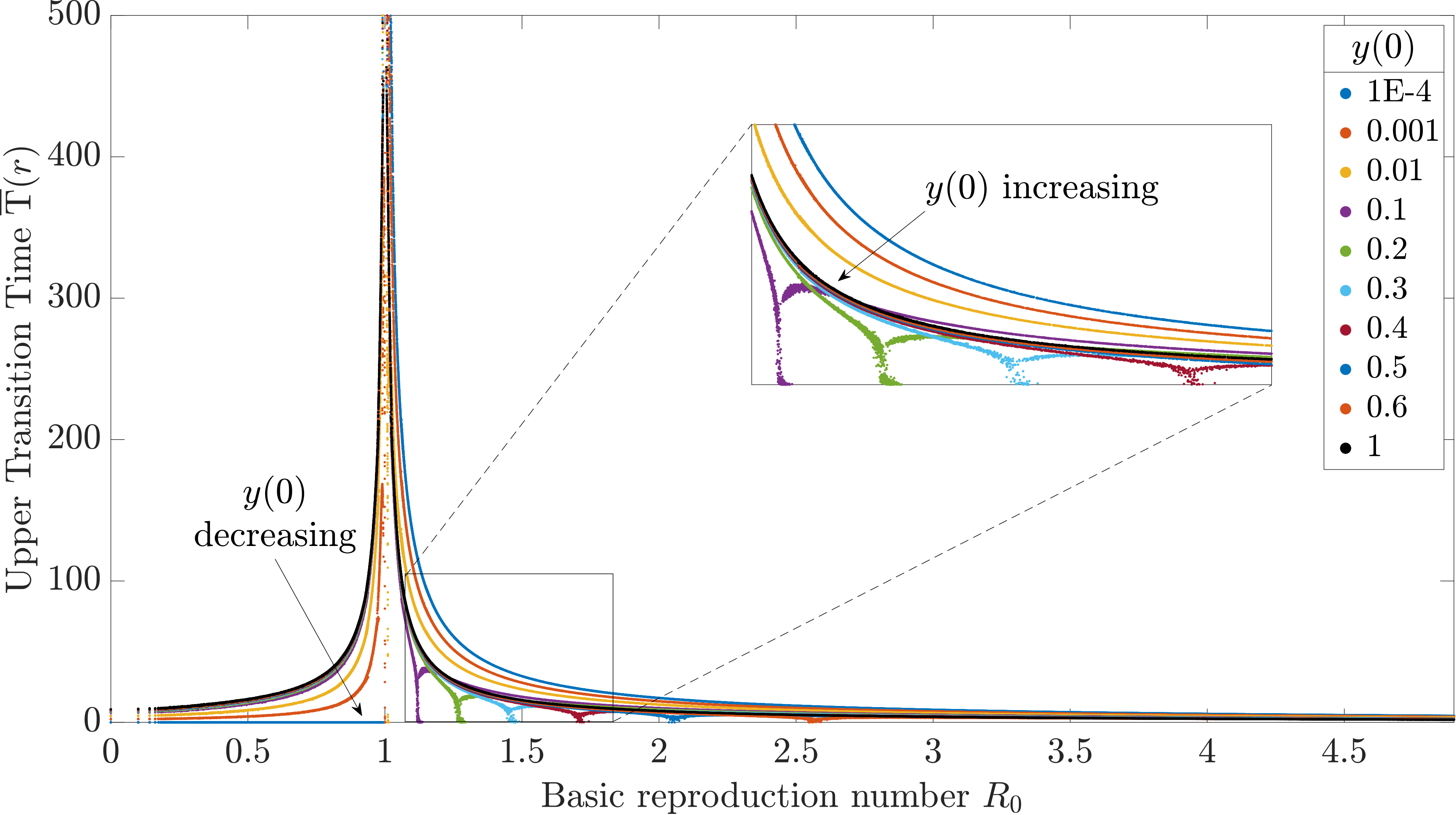}
    \caption{The upper-transition time $\mathrm{\overline{T}}(r)$ versus the basic reproduction number $R_0$ for different values of the starting prevalence $y(0)$. The chosen parameters are a graph size $N=50$, accuracy tolerance $r=10^{-4}$, infection rate $\beta = 0.1$. Each graph is an Erd\H{o}s-R\'enyi graph $G_{p}(N)$ with link density $p$ chosen uniformly at random: $p\sim \text{Unif}(0,1)$. For $R_0 > 1$, the small values of $y(0)$ correspond to the top curves, i.e. to a higher upper-transition time $\mathrm{\overline{T}}(r)$. For $R_0<1$, the small values of $y(0)$ correspond to the bottom curves, i.e. to a lower upper-transition time $\mathrm{\overline{T}}(r)$. For $y(0) = 10^{-4}$ the upper-transition time $\mathrm{\overline{T}}(r)$ is zero per definition. To determine the upper-transition time $\mathrm{\overline{T}}(r)$ we approximate the steady-state prevalence $y_{\infty}$ with $y_{\infty} \approx y(t_{\text{max}}) = y(10^4)$. The inset shows the dips below the curves in more detail.}
    \label{fig:tbarzoomed}
\end{figure*}

\subsection{Applying the Quenched approximation}\label{sec:results}
Given that the inter-updates times $T_1,T_2,\dots, T_M = \Delta t$ are all equal, we have extended the definition of the upper-transition time $\mathrm{\overline{T}}(r) = \mathrm{\overline{T}}(r,\mathcal{G})$ to sequences of graphs. In the following, we consider the problem of predicting the viral state vector $V(t)$ when only the sequence of graphs $G_1, ..., G_m$ is known. Predicting an epidemic in this setting is relevant when it is possible to obtain (an estimate of) the underlying, time-varying network structure of a population, but it is difficult to observe the viral state of the individuals. For instance, in some settings, estimates of contact networks could be obtained from human mobility data and social contact patterns \cite{mossong2008social,verelst2021socrates}, whereas accurately estimating the viral state might require expensive surveillance systems. Here, we focus on predicting the viral state $V(t)$ on the graph $G_m$, during the corresponding time interval $[t_{m-1}, t_m)$, in the quenched regime. We require only the graph $G_m$, the previous graph $G_{m-1}$ and the inter-update time $\Delta t > \mathrm{\overline{T}}(r,\mathcal{G})$ to be known \footnote{As an extension of our prediction method in this application setting, one could consider not only the most recent graph $G_{m-1}$ to be known, but the whole graph sequence $G_1, ..., G_{m-1}$.}. Particularly, our prediction method does not consider any observations of the viral state $V(t)$, neither at times $t < t_{m-1}$ nor at times $t \in [t_{m-1},t_m)$. We assume that there are no 'die-outs'.
The prediction method is applying the quenched approximation: the state vector $V(t_{m-1} + t)$ is predicted to equal the state vector $V(t)$ of a NIMFA SIS process on a static graph $G_m$ starting in $V(0) = V_{\infty}(G_{m-1})$. \\
In this section, we show how the quality of the prediction improves with $\Delta t$. Similar to Fig. \ref{fig:timeregimesplot} and Fig. \ref{fig:markovnimfacomparison}, the graph sequence $\mathcal{G}$ exists of i.i.d. ER-graphs with link density $p$ uniformly distributed on the interval $[0.3,0.8]$. For each interval $[t_{m-1},t_m)$ we make a ``quenched prediction'' based on $G_{m-1}$ and $G_m$. We calculate $V_{\infty}(G_{m-1})$ and then predict the epidemic on the interval to be equal to the NIMFA process on the static graph $G_m$ starting in $V(0) = V_{\infty}(G_{m-1})$. Fig. \ref{fig:predictions} shows these predictions for a sequence of ER-graphs with link density $p \in [0.3,0.8]$. As expected, Fig. \ref{fig:predictions} shows that the predictions improve with increasing inter-update time $\Delta t$. Additionally, when $y_{\infty}(G_{m-1})$ and $y_{\infty}(G_m)$ are close to each other, the prediction is accurate even at $\Delta t = 1$. The fact that the upper-transition time $\mathrm{\overline{T}}(r)$ is small when  $y_{\infty}(G_{m-1}) \approx y_{\infty}(G_m)$, corresponds to the dips in Fig. \ref{fig:tbarzoomed}, which appear when $y(0) \approx y_{\infty}$. Interestingly, the largest errors in Fig. \ref{fig:predictions} occur after intervals with a (large) decrease in the prevalence $y(t)$. Additional simulations with the same parameters also showed the largest errors after intervals with decreasing prevalence $y(t)$.
The reason that the estimate $V(t_m) = V_{\infty}(G_{m})$ is worse when $y(t_{m-1})$ is (much) larger than $y(t_m)$ is that the graph $G_{m}$ has a lower basic reproduction number $R_0$. Above the epidemic threshold $R_0 > 1$, graphs with lower basic reproduction numbers $R_0$ generally converge slower (largely independent of $V(0)$), as shown in Fig. \ref{fig:tbarzoomed}. The graphs with low basic reproduction number $R_0$ correspond to the intervals where the prevalence strongly decreases in Fig. \ref{fig:predictions}, because the steady-state prevalence $y_{\infty}$ increases with the basic reproduction number $R_0$.

\begin{figure}
  \centering
  \begin{subfigure}[]{0.47\textwidth}
  \centering
      \includegraphics[width=\textwidth]{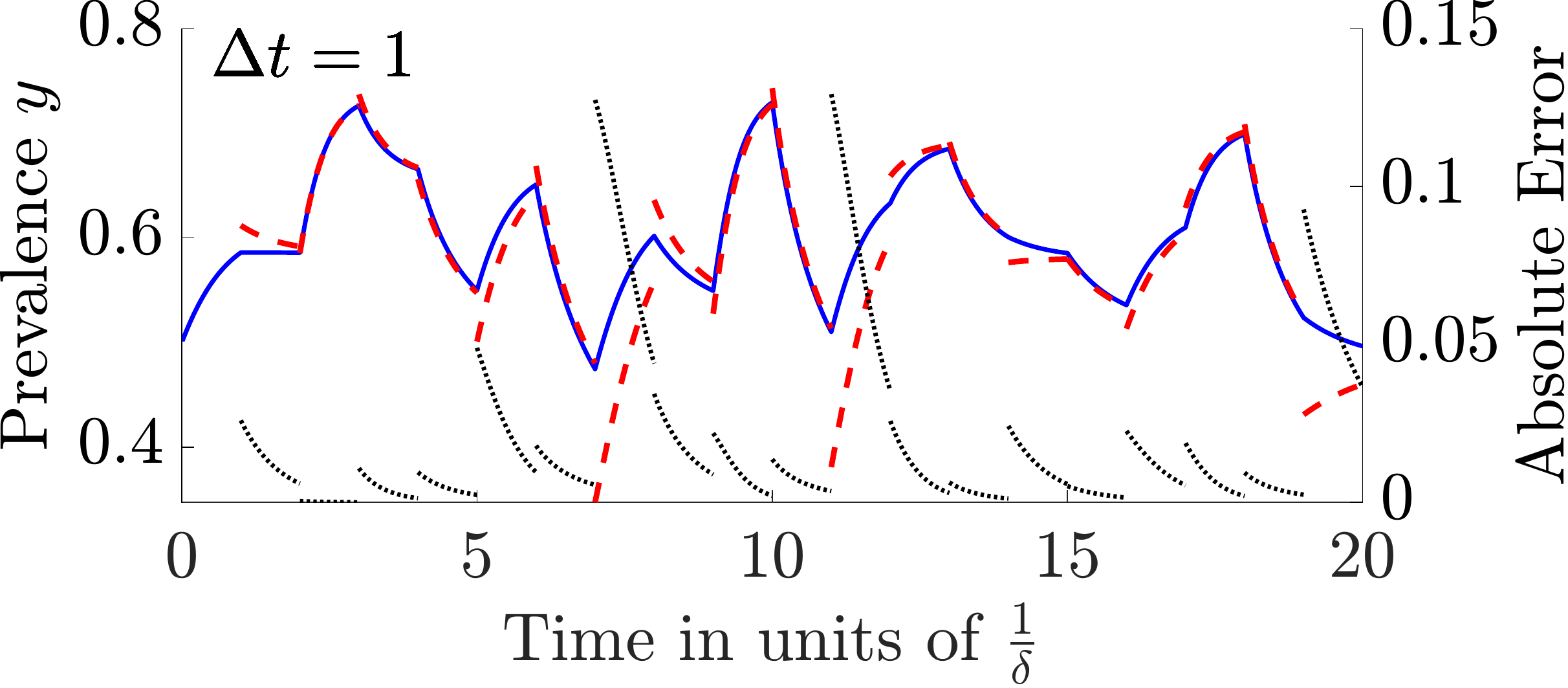}
  \end{subfigure}
  \begin{subfigure}[]{0.47\textwidth}
  \centering
      \includegraphics[width=\textwidth]{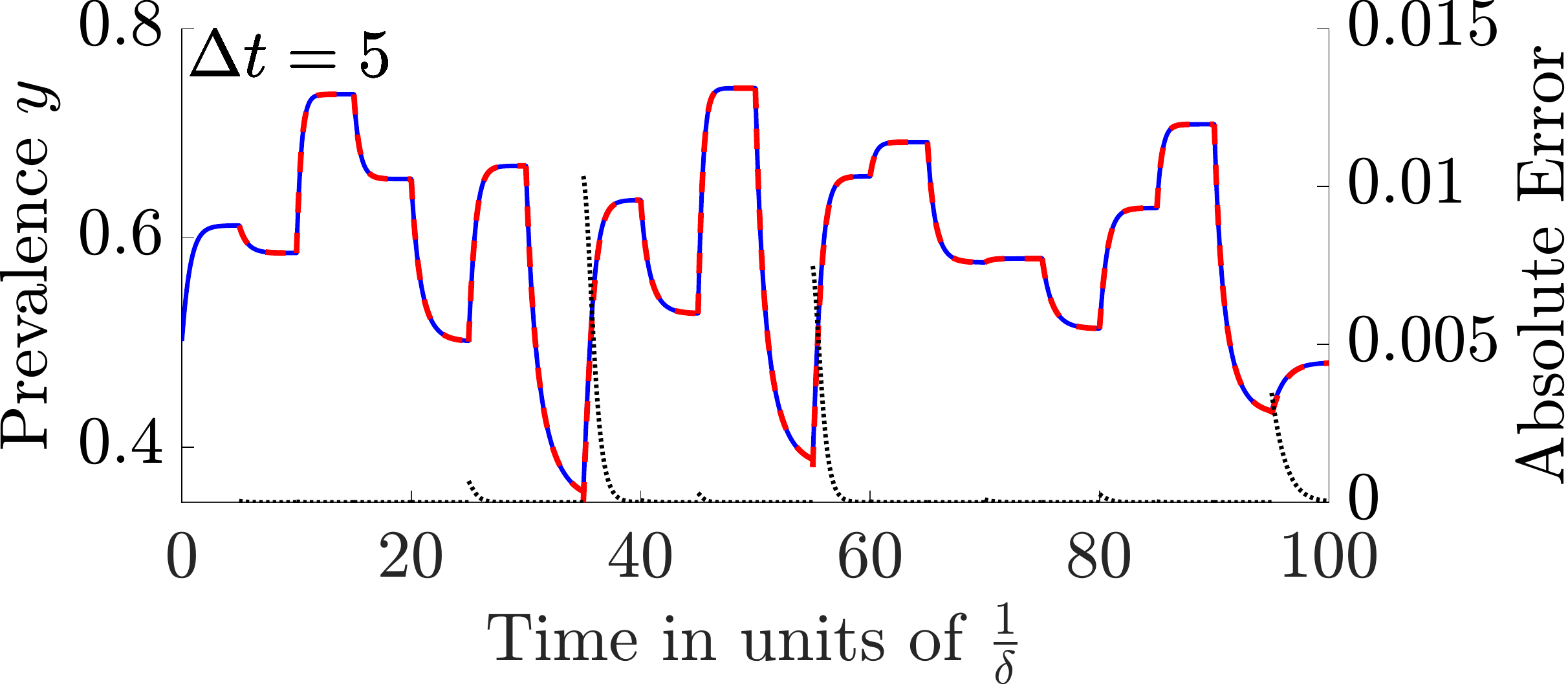}
  \end{subfigure}
  \begin{subfigure}[]{0.47\textwidth}
  \centering
      \includegraphics[width=\textwidth]{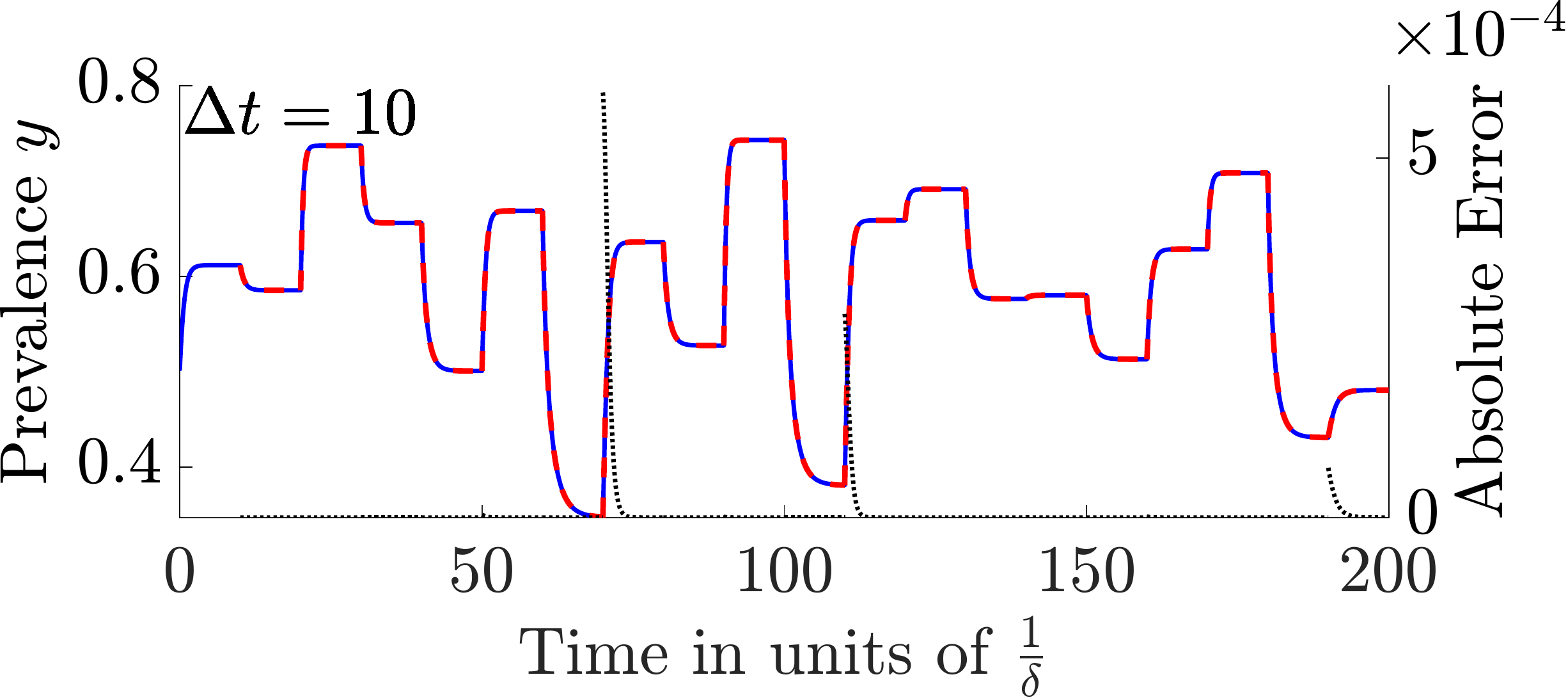}
  \end{subfigure}
  \caption{On the left $y$-axis the prevalence of a NIMFA process on a time-varying network (solid blue) is compared with the prevalence of the ``quenched prediction'' of each interval $[t_{m-1},t_m)$ starting from $y_{\infty}(G_{m-1})$ (dashed red) for different inter-update times $\Delta t$. The top figure has $\Delta t = 1$, the middle has $\Delta t = 5$ and the bottom one has $\Delta t = 10$. On the right $y$-axis the absolute error (dotted black) between the prediction and the process is shown. Each graph $G_m$ is an ER-graph with link density $p\in[0.3,0.8]$ and the graphs $G_m$ are the same between subfigures.}
\label{fig:predictions}
\end{figure}
In addition, Fig. \ref{fig:dieoutsareanissue} illustrates the quenched prediction when the process ``dies out''. We will estimate the starting condition of the interval with $V(t_m) = r u$ if $y_{\infty}(G_{m-1}) < r$. Fig. \ref{fig:dieoutsareanissue} shows the problem illustrated in Lemma \ref{lemma:arbitrarilyslowfromzero}. Since the inter-update time is well above the transition time for many of the graphs in the sequence, the prevalence drops below $r$ and the predictions fail, which is a major weakness of our method.\\
Firstly, Lemma \ref{lemma:arbitrarilyslowfromzero} shows that no matter how small the accuracy tolerance $r$ is chosen, ``die-outs'' will continue to cause bad predictions. Secondly, precisely when the approximation should be accurate (namely when $\Delta t$ is large), the process will stay on a graph with $R_0 < 1$ long enough to reach a prevalence $y(t) < r$ more often. Luckily, most real-world epidemics are no longer interesting after they die-out. Hence, as mentioned before, the no ``NIMFA die-outs" assumption is reasonable in these cases. However, if, for example, multiple COVID-19 waves are modelled (which is possible with NIMFA because there is no real die-out between waves), our quenched approximation will likely fail to predict a new wave accurately after a ``die-out''.

\begin{figure}
    \centering
    \includegraphics[width=0.47\textwidth]{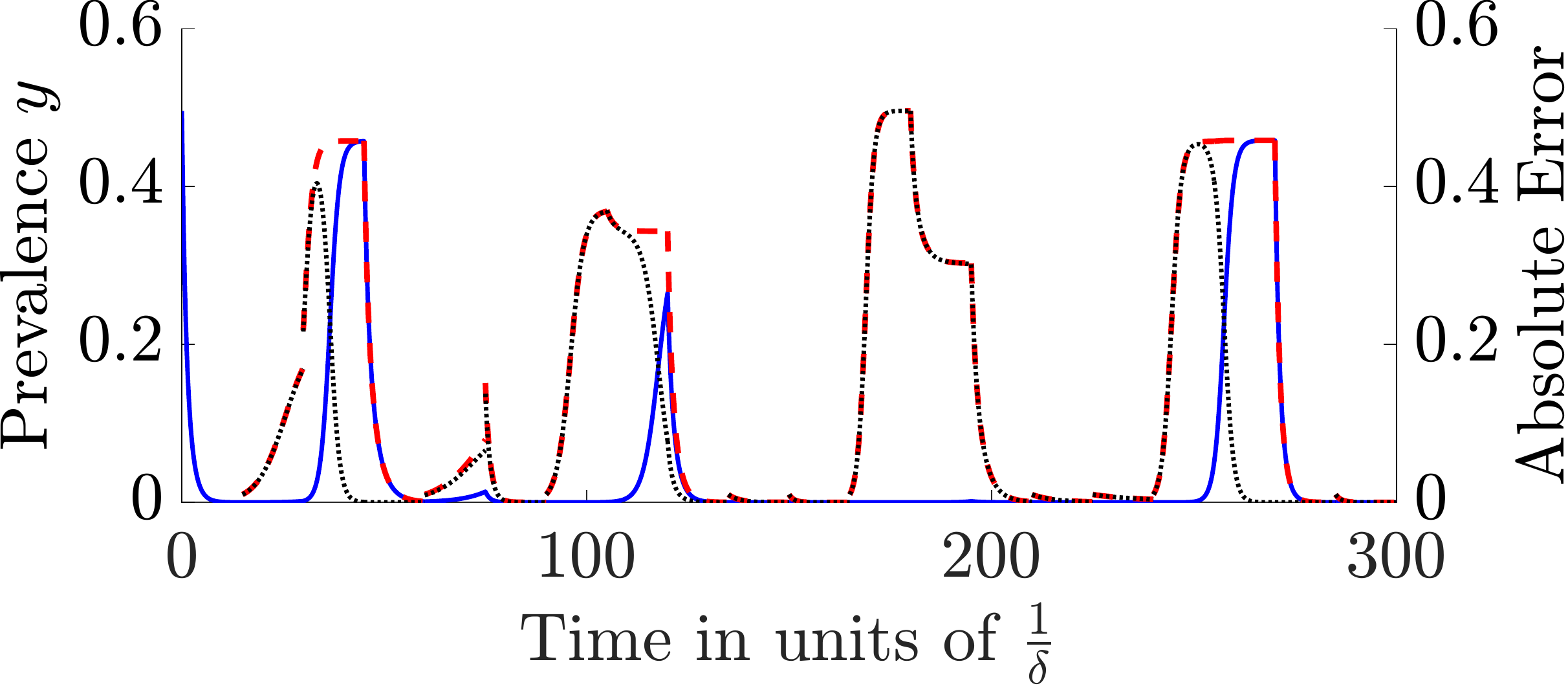}
    \caption{On the left $y$-axis the prevalence of a NIMFA process on a time-varying network (solid blue) is compared with the prevalence of the ``quenched prediction'' of each interval $[t_{m-1},t_m)$ starting from $y_{\infty}(G_{m-1})$ (dashed red) with inter-update times $\Delta t = 10$. When $y_{\infty}(G_{m-1}) = 0$, we set $y(t_m) = r u$ instead. On the right $y$-axis the absolute error (dotted black) between the prediction and the process is shown. Graphs are specifically chosen for the epidemic to be both above and below the epidemic threshold on different intervals. The issue illustrated in Lemma \ref{lemma:arbitrarilyslowfromzero} is visible: if the prevalence decreases much below $y = r u$, the error in the prediction is large.}
    \label{fig:dieoutsareanissue}
\end{figure}

\section{Bounds for the upper-transition time}\label{sec:bounds}
We bound the convergence towards the steady state $V_{\infty}$ from above and below separately.
We write $V(t) \geq V_{\infty}$ if $v_i(t) \geq v_{\infty,i}$ for all nodes $i$ and similarly, $V(t) \leq V_{\infty}$ if $v_i(t) \leq v_{\infty,i}$ for all nodes $i$. We call the convergence to the steady state from an infection probability vector $V(t) \geq V_{\infty}$ the \textit{decay} of $V(t)$ and convergence to the steady state from an infection probability vector $V(t) \leq V_{\infty}$ the \textit{growth} of $V(t)$. We upper bound the upper-transition time $\mathrm{\overline{T}}(r)$ by deriving bounds for decay (Section \ref{sec:upperbounds1}) and for growth (Section \ref{sec:upperbounds2}) and taking the maximum to combine them in Section \ref{sec:thegeneralupperbound}.
The following Theorem \ref{thm:prassedevriendt} (Theorem 5 in \cite{prasse2021clustering}) tells us that the slowest allowed growth starts in $V(0) = ru$ and the slowest decay starts in $V(0) = u$ for all contact graphs $G$, curing rates $\delta$ and infection rates $\beta$.
\begin{theorem}[Theorem 5 in \cite{prasse2021clustering}]{\label{thm:prassedevriendt}}
    Consider two NIMFA systems with respective positive curing rates $\delta_i$ and $\Tilde{\delta}_{i}$, non-negative infection rates $\beta_{ij}$ and $\Tilde{\beta}_{ij}$ and viral states $v_i(t)$ and $\Tilde{v_i}(t)$. Suppose that the initial viral states $v_i (0)$ and $\Tilde{v_i}(0)$ are in $[0,1]$ for all nodes $i$ and that matrices $B$ and $\Tilde{B}$, with elements $\beta_{ij}$ and $\Tilde{\beta}_{ij}$, respectively, are irreducible. Then, if  $\Tilde{\delta}_{i}\leq \delta_i$ and $\Tilde{\beta}_{ij}\geq \beta_{ij}$ for all nodes $i,j$, $\Tilde{V}(0) \geq V(0)$ implies that $\Tilde{V}(t) \geq V(t)$ at every time $t$.
\end{theorem}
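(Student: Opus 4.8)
The plan is to recognise this statement as a comparison principle for cooperative (quasimonotone) systems of ordinary differential equations, and to reduce it to verifying the two structural hypotheses such a principle requires: that the dominating vector field is quasimonotone increasing, and that it dominates the other field pointwise. I would write the two heterogeneous NIMFA systems as $\df{V}{t} = f(V)$ and $\df{\tilde{V}}{t} = g(\tilde{V})$, where $f_i(v) = -\delta_i v_i + (1-v_i)\sum_j \beta_{ij} v_j$ governs $V$ and $g_i(v) = -\tilde{\delta}_i v_i + (1-v_i)\sum_j \tilde{\beta}_{ij} v_j$ governs $\tilde{V}$.

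Before anything else I would record the forward invariance of the cube $[0,1]^N$: on the face $v_i = 0$ one has $\df{v_i}{t} = \sum_j \beta_{ij} v_j \geq 0$, while on the face $v_i = 1$ one has $\df{v_i}{t} = -\delta_i \leq 0$, so a trajectory started in $[0,1]^N$ remains there. This is the ingredient that guarantees $1 - v_i \geq 0$ and $v_j \geq 0$ for all times, which is precisely what makes the subsequent sign arguments valid. Next I would verify the quasimonotone (Kamke) condition for $g$: for $j \neq i$ we have $\partial g_i / \partial v_j = (1 - v_i)\tilde{\beta}_{ij} \geq 0$ on the invariant cube, so the off-diagonal coupling is cooperative.

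I would then establish the pointwise domination $g_i(v) \geq f_i(v)$ for every $v \in [0,1]^N$: the difference equals $(\delta_i - \tilde{\delta}_i)\, v_i + (1-v_i)\sum_j (\tilde{\beta}_{ij} - \beta_{ij})\, v_j$, which is a sum of nonnegative terms exactly because $\tilde{\delta}_i \leq \delta_i$, $\tilde{\beta}_{ij} \geq \beta_{ij}$, and all of $v_i$, $1-v_i$, $v_j$ are nonnegative on the cube. With $g$ quasimonotone increasing, $g \geq f$, the exact solution $\tilde{V}$ of the $g$-flow, the subsolution $V$ satisfying $\df{V}{t} = f(V) \leq g(V)$ of the same flow, and the initial ordering $V(0) \leq \tilde{V}(0)$, the standard comparison theorem for quasimonotone systems delivers $V(t) \leq \tilde{V}(t)$ for all $t$, which is the claim.

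I expect the only genuine obstacle to be the tangential case, in which the two trajectories merely touch rather than cross transversally. Arguing directly, let $t^{\ast}$ be the first time a coordinate ordering is violated; by continuity $\tilde{v}_k(t^{\ast}) = v_k(t^{\ast})$ for some node $k$ while $\tilde{v}_j(t^{\ast}) \geq v_j(t^{\ast})$ for all $j$. Differencing the two $k$-th equations at $t^{\ast}$ gives $\df{(\tilde{v}_k - v_k)}{t} = (\delta_k - \tilde{\delta}_k)\, v_k + (1-v_k)\sum_j (\tilde{\beta}_{kj}\tilde{v}_j - \beta_{kj} v_j) \geq 0$, since $\tilde{\beta}_{kj}\tilde{v}_j \geq \beta_{kj}\tilde{v}_j \geq \beta_{kj} v_j$; this forbids the coordinate from dropping below zero, but only weakly, so the clean way to close the argument is the usual $\eps$-perturbation underlying the comparison theorem: replace $g$ by $g + \eps u$, obtain strict inequalities, and let $\eps \to 0^{+}$. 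Finally I would note that the irreducibility hypothesis on $B$ and $\tilde{B}$ is not needed for the claimed weak inequality; it serves rather to upgrade the conclusion to the strict ordering $\tilde{V}(t) > V(t)$ for $t > 0$ via the strong monotonicity of irreducible cooperative flows.
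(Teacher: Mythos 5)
Your proof is sound, but there is nothing in this paper to compare it against: the statement is quoted verbatim as Theorem~5 of the cited reference \cite{prasse2021clustering} and is imported without proof, serving only as an ingredient for the bounds in Section~\ref{sec:bounds}. Judged on its own merits, your argument is the standard comparison principle for cooperative (quasimonotone) systems, and every step you need is verified correctly: forward invariance of $[0,1]^N$ (checking the faces $v_i=0$ and $v_i=1$), the Kamke condition $\partial g_i/\partial v_j=(1-v_i)\Tilde{\beta}_{ij}\geq 0$ for $j\neq i$ on the invariant cube, and the pointwise domination $g_i(v)-f_i(v)=(\delta_i-\Tilde{\delta}_i)v_i+(1-v_i)\sum_j(\Tilde{\beta}_{ij}-\beta_{ij})v_j\geq 0$, which together let you treat $V$ as a subsolution of the $\Tilde{V}$-flow and invoke the quasimonotone comparison theorem. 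You also correctly identify the one delicate point --- the first-crossing argument only yields a weak inequality at the touching time --- and the $\eps$-perturbation (replace $g$ by $g+\eps u$, conclude strictly, let $\eps\to 0^{+}$) is exactly the standard way to close it. Your closing remark is also accurate: irreducibility of $B$ and $\Tilde{B}$ is not needed for the weak ordering $\Tilde{V}(t)\geq V(t)$; it is the hypothesis under which cooperative flows become strongly monotone, upgrading the conclusion to a strict ordering for $t>0$. In short, the proposal is a correct, self-contained proof of a result this paper only cites, and it is arguably more general than what the paper's usage requires, since the paper applies the theorem only with homogeneous rates $\delta_i=\delta$, $\beta_{ij}=\beta a_{ij}$.
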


In the following, we derive bounds for the convergence of NIMFA by considering two cases. First, we consider a \emph{growing} epidemic, where $V(0) < V_{\infty}$, for all nodes $i$. Second, we consider a \emph{decaying} epidemic, where $V(0) > V_{\infty}$, for all nodes $i$. We bound both cases with their extremal cases, which are $V(0) = ru$ and $V(0) = u$ respectively. While the proofs for the bounds in this section rely on Assumption \ref{assumption:vzero} below for generality, numerical simulations indicate that the bounds are applicable to general conditions on the initial viral state $V(0)$. Specifically, the simulations suggest that mixed cases, for which the bounds do not hold, are rare, if not non-existent. The underlying reason is that the mixed cases converge faster than at least one of the extremal cases. This property is assumed to be true in Assumption \ref{assumption:vzero}. Formally extending our results to mixed cases is subject for future work. \\
\begin{assumption}\label{assumption:vzero}
    A NIMFA SIS process with effective infection rate $\tau$, on the graph $G$, with a mixed initial condition $V(0)$, where $ r<= v_i(0) < v_{\infty, i}$ for some nodes $i$ and $v_j(0) > \text{ max }\{v_{\infty, j},r\}$, for some nodes $j$, converges slower to the steady state prevalence $y_{\infty}$ than the same process with the initial condition either $V(0) = u$ or $V(0) = ru$
\end{assumption}
To compare different processes, we introduce the notation $y(t;G,\tau,V(0))$ for the prevalence of the static NIMFA SIS process at time $t$, on the graph $G$ with effective infection rate $\tau$ and starting infection probability vector $V(0)$.

\subsection{ Conjectures for upper bounds for the decay to the all healthy state and endemic steady state}\label{sec:upperbounds1}
In this section, we first investigate the decay from the all infected state $V(0) = u$ on regular graphs and we state two conjectures that upper bound the upper-transition time. Conjecture \ref{thm:decaybound1} gives an upper bound for the case $R_0 \leq 1$ and Conjecture \ref{conjecture:taugroterdantauc} gives an upper bound for the case $R_0 > 1$. Afterwards, we derive a second bound for decay below the epidemic threshold in Lemma \ref{lemma:secondupperbound}.\\

 By Theorem \ref{thm:prassedevriendt}, the slowest decay towards the all-healthy state $V_{\infty} = 0$, for fixed effective infection rates $\tau < \tau_c^{(1)}(K_N) =$$\frac{1}{N-1}$, occurs in the complete graph $K_N$. The decay is slowest on the complete graph, because each node $i$ neighbours all other nodes $j$. Therefore, each node will receive the maximal possible infection attempts for each state vector $V(t)$. 

Based on extensive numerical simulations, we conjecture here that the prevalence $y(t;K_N,\tau^{(1)}_c(K_N),u)$ on the complete graph $K_N$ at the epidemic threshold $\tau_c^{(1)}(K_N)$, starting in the all-infected state $V(0) = u$, decays slower than \emph{any} prevalence $y(t;G,\tau,u)$, on any graph $G$, when $\tau \leq \tau_c^{(1)}(G)$, even if $\tau_c^{(1)}(K_N) \leq \tau \leq \tau_c^{(1)}(G)$. 
\begin{conjecture} \label{thm:decaybound1}
Given a graph $G$ and $\tau \leq \tau^{(1)}_c(G)$, the prevalence $y(t;G,\tau,V(0))$ of the epidemic process on $G$ with effective infection rate $\tau$ and starting infection probability vector $V(0)$ satisfies
\begin{eqnarray}\label{eq:thmdecaybound}
y(t;G,\tau,V(0)) \leq y(t;G,\tau^{(1)}_c(G),u)\nonumber\\ \leq y(t;K_N,\tau^{(1)}_c(K_N),u) = \frac{1}{1+t}.    
\end{eqnarray}
\end{conjecture}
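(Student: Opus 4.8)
The plan is to prove the chain in \eqref{eq:thmdecaybound} as three separate claims: (i) the rightmost equality $y(t;K_N,\tau_c^{(1)}(K_N),u)=\frac{1}{1+t}$; (ii) the leftmost inequality $y(t;G,\tau,V(0))\le y(t;G,\tau_c^{(1)}(G),u)$; and (iii) the central, cross-graph inequality $y(t;G,\tau_c^{(1)}(G),u)\le y(t;K_N,\tau_c^{(1)}(K_N),u)$. Claims (i) and (ii) are elementary consequences of, respectively, an explicit ODE solution and the monotone comparison Theorem~\ref{thm:prassedevriendt}; claim (iii) is the crux, and is the reason the statement is phrased as a conjecture rather than a theorem.

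For claim (i), on $K_N$ every node has $N-1$ neighbours and $\lambda_1(A_{K_N})=N-1$, so $\tau_c^{(1)}(K_N)=\frac{1}{N-1}$. Starting from $V(0)=u$ the symmetry of $K_N$ is preserved by \eqref{NIMFASISdelta1}, so $v_i(t)=v(t)$ for all $i$, and the governing equation collapses to the scalar ODE
\begin{equation*}
\df{v}{t} = -v + \tfrac{1}{N-1}(1-v)(N-1)v = -v^2, \qquad v(0)=1,
\end{equation*}
whose solution is $v(t)=\frac{1}{1+t}$. Since $y=v$ here, claim (i) follows. I note that the same computation holds verbatim on \emph{any} $d$-regular graph, where $\lambda_1=d$ and symmetry is likewise preserved from $V(0)=u$, so every regular graph attains equality in \eqref{eq:thmdecaybound}; this identifies regular graphs as the extremal family and will guide the approach to claim (iii). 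For claim (ii) I would apply Theorem~\ref{thm:prassedevriendt} with the $(\tilde{\cdot})$-system being the process on $G$ with rate $\tau_c^{(1)}(G)$ and initial state $u$, and the untilded system the process on $G$ with rate $\tau$ and initial state $V(0)$: both use $\delta=1$; $\tau\le\tau_c^{(1)}(G)$ gives $\tilde\beta_{ij}=\tau_c^{(1)}(G)a_{ij}\ge\tau a_{ij}=\beta_{ij}$ entrywise; and $\tilde V(0)=u\ge V(0)$ since $v_i(0)\in[0,1]$. Hence $\tilde V(t)\ge V(t)$ for all $t$, and averaging over nodes yields (ii). (For disconnected $G$ one applies the comparison componentwise, since Theorem~\ref{thm:prassedevriendt} requires irreducibility; isolated nodes obey $\df{v}{t}=-v$ and decay strictly faster.)

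The hard part is claim (iii). The natural temptation—applying Theorem~\ref{thm:prassedevriendt} once more to dominate $G$ by $K_N$—fails: on a shared edge the rates compare as $\tau_c^{(1)}(K_N)=\frac{1}{N-1}\le\frac{1}{\lambda_1(G)}=\tau_c^{(1)}(G)$ because $\lambda_1(G)\le N-1$, so $K_N$ carries the \emph{smaller} per-edge rate and the monotone comparison points the wrong way. Resolving the trade-off between more edges and a lower threshold rate is exactly the difficulty. My plan is instead to work at the level of the prevalence dynamics. Writing $d_j$ for the degree of node $j$, one finds at threshold $\tau=1/\lambda_1(G)$ that
\begin{equation*}
\df{}{t}\sum_i v_i = \sum_j v_j\!\left(\frac{d_j}{\lambda_1(G)}-1\right) - \frac{1}{\lambda_1(G)}\,V^{T}AV .
\end{equation*}
For a regular graph the linear bracket vanishes identically and the quadratic term reproduces $\df{v}{t}=-v^2$; the content of (iii) is that degree heterogeneity should make the right-hand side \emph{more} negative relative to $y^2$. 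The idea is to project onto the Perron eigenvector $x>0$ of $A$ (with $Ax=\lambda_1 x$), for which the linear part cancels exactly and
\begin{equation*}
\df{}{t}\,x^{T}V = -\frac{1}{\lambda_1(G)}\sum_{i,j} x_i\, a_{ij}\, v_i v_j ,
\end{equation*}
and then to transfer this control back to the uniform prevalence $y$ using the spectral bound $\lambda_1(G)\ge 2L/N$ together with a convexity or Cauchy--Schwarz estimate on the quadratic form.

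The main obstacle I anticipate is the non-closure of the dynamics: neither $y$ nor the Perron-weighted prevalence $x^{T}V$ satisfies an autonomous ODE, and bounding $y(t)$ uniformly in $t$ requires controlling $V^{T}AV$ and the degree-weighted sum $\sum_j d_j v_j$ simultaneously, for every non-regular $G$ and along the entire trajectory. In particular, showing that the heterogeneity contribution $\sum_j v_j(\tfrac{d_j}{\lambda_1(G)}-1)$ never reverses the comparison with $-y^2$—despite its sign being node-dependent, positive on high-degree and negative on low-degree nodes—is precisely what eludes a clean argument. This is the step I expect to resist a rigorous proof and for which the statement currently relies on extensive numerical evidence, hence its status as Conjecture~\ref{thm:decaybound1}.
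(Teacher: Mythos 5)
Your proposal is correct as far as a conjecture can be ``proved,'' and it follows essentially the same route as the paper. Your claim (i) is exactly Lemma~\ref{lemma:frac11plust} in Appendix~\ref{app:lemmafrac11plust}: symmetry collapses the threshold dynamics to $\df{y}{t}=-y^2$, and the paper, like you, notes this holds for every $k$-regular graph, identifying the extremal family. Your claim (ii), reducing general $\tau\leq\tau_c^{(1)}(G)$ and general $V(0)$ to the case $\tau=\tau_c^{(1)}(G)$, $V(0)=u$ via Theorem~\ref{thm:prassedevriendt}, is the same (largely implicit) first step taken in Appendix~\ref{app:substantationconjecture}, and your componentwise handling of disconnected graphs is consistent with the paper's remarks following the conjecture. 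For the crux, your claim (iii), both you and the paper stop short of a proof, and your starting point --- projecting the dynamics onto the Perron eigenvector $x_1$, for which the linear term cancels exactly --- is precisely the paper's: Appendix~\ref{app:substantationconjecture} sets $c(t)=x_1^TV(t)$, decomposes $V(t)=c(t)x_1+\xi(t)$ with $x_1^T\xi(t)=0$, and bounds $Ny(t)\leq c(0)c(t)+\|\xi(0)\|_2\|\xi(t)\|_2$ by Cauchy--Schwarz. Where the paper pushes further than your sketch is in isolating the difficulty into two clean, numerically verified but unproven decay estimates, $c(t)\leq c(0)\frac{1}{1+t}$ in \eqref{eq:requirementonc} and $\|\xi(t)\|_2\leq\|\xi(0)\|_2\frac{1}{1+t}$ in \eqref{eq:requirementonxi}, which together would close the argument via \eqref{eq:requirementforconjecture}; your plan to ``transfer control back to $y$'' runs into the same two obstacles without separating them this cleanly. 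One useful point you make explicitly that the paper does not: the reason Theorem~\ref{thm:prassedevriendt} cannot be applied across graphs is that $K_N$ carries the \emph{smaller} per-edge rate ($\tau_c^{(1)}(K_N)\leq\tau_c^{(1)}(G)$), so the monotone comparison points the wrong way --- a correct and worthwhile observation about why the statement resists the obvious attack.
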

Where $y(t;K_N,\tau^{(1)}_c(K_N),u) = \frac{1}{1+t}$ is proven in Lemma \ref{lemma:frac11plust} in Appendix \ref{app:lemmafrac11plust}. Analytical substantiation for Conjecture \ref{thm:decaybound1} is provided in Appendix \ref{app:substantationconjecture}. Additionally, we conjecture that the requirement $\tau \leq \tau_c^{(1)}(G)$ is not necessary and that the prevalence $y(t;K_N,\tau^{(1)}_c(K_N),u)$ also converges to the steady state $V(0) = 0$ slower than any prevalence $y(t;G,\tau,u)$ converges to its endemic steady state prevalence $y_{\infty}(G,\tau)$ when $\tau > \tau_c^{(1)}(G)$. 

\begin{conjecture}\label{conjecture:taugroterdantauc}
For any $\tau > \tau_c$ the prevalence $y(t;G,\tau,V(0))$ of the epidemic process on $G$ with effective infection rate $\tau$ starting in $V(0)\geq V_{\infty}$  decays faster to the steady state prevalence $y_{\infty}(G;\tau)$ than $y(t;K_N,\tau^{(1)}_c(K_N),u)$ to $y_\infty =0$. In other words, it holds that:
\begin{eqnarray}
|y(t;G,\tau,V(0)) - y_{\infty}(G;\tau)| \leq |y(t;G,\tau,u) - y_{\infty}(G;\tau)|\nonumber\\ 
\leq y(t;K_N,\tau^{(1)}_c(K_N),u) = \frac{1}{1+t}.\nonumber\\
\end{eqnarray}
\end{conjecture}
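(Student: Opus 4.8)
The plan is to split the chain in Conjecture~\ref{conjecture:taugroterdantauc} into its two halves and treat them separately. The first inequality, $|y(t;G,\tau,V(0)) - y_{\infty}(G;\tau)| \le |y(t;G,\tau,u) - y_{\infty}(G;\tau)|$, should follow directly from the monotone comparison principle of Theorem~\ref{thm:prassedevriendt}. In the decay regime $V(0) \ge V_{\infty}$ we also have $V(0) \le u$ componentwise, so $V_{\infty} \le V(0) \le u$. Applying Theorem~\ref{thm:prassedevriendt} to the same NIMFA system (identical $G$ and $\tau$) with the stationary trajectory $V_{\infty}$ and the trajectory from $V(0)$ gives $V(t;V(0)) \ge V_{\infty}$, while comparing the trajectories from $V(0)$ and from $u$ gives $V(t;V(0)) \le V(t;u)$. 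Hence $0 \le y(t;G,\tau,V(0)) - y_{\infty} \le y(t;G,\tau,u) - y_{\infty}$, which is the first inequality with the absolute values resolved. (For disconnected $G$, where the irreducibility hypothesis of Theorem~\ref{thm:prassedevriendt} fails, one would argue component-by-component.)

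The second inequality, $y(t;G,\tau,u) - y_{\infty} \le 1/(1+t)$, is the crux, and my plan is to reduce it to a scalar Riccati differential inequality. Writing $g(t) = y(t;G,\tau,u) - y_{\infty} \ge 0$ and $W(t) = V(t) - V_{\infty} \ge 0$, I would show that $g$ satisfies $\df{g}{t} \le -g^{2}$ with $g(0) = 1 - y_{\infty} \le 1$. Since $h(t) = 1/(1+t)$ solves $\df{h}{t} = -h^{2}$ with $h(0)=1 \ge g(0)$, the standard comparison theorem for ODEs (the right-hand side $-x^{2}$ is locally Lipschitz) then yields $g(t) \le 1/(1+t)$, reproducing the complete-graph-at-threshold envelope of Lemma~\ref{lemma:frac11plust}. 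Differentiating the prevalence and eliminating the steady-state terms via the fixed-point relation $u^{T} V_{\infty} = \tau\bigl(u^{T} A V_{\infty} - V_{\infty}^{T} A V_{\infty}\bigr)$, a direct computation gives
\[
\df{g}{t} = \frac{1}{N}\Bigl[\tau\, u^{T} A W - u^{T} W - 2\tau\,(A V_{\infty})^{T} W - \tau\, W^{T} A W\Bigr],
\]
so the target reduces to the purely algebraic quadratic inequality
\[
\tau\, u^{T} A W - u^{T} W - 2\tau\,(A V_{\infty})^{T} W - \tau\, W^{T} A W \le -\tfrac{1}{N}\bigl(u^{T} W\bigr)^{2}
\]
along the trajectory, where $0 \le W \le u - V_{\infty}$.

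I expect the verification of this quadratic inequality to be the main obstacle, and the reason the statement is only conjectured. The quadratic terms $-\tau W^{T} A W$ and $-\tfrac1N (u^{T}W)^{2}$ are favourable (both nonpositive for $W \ge 0$), but the linear form $\bigl(\tau u^{T}A - u^{T} - 2\tau (AV_{\infty})^{T}\bigr)W$ is not sign-definite: for high-degree nodes the coefficient $\tau d_i - 1 - 2\tau (AV_{\infty})_i$ can be positive, so the negative quadratic contributions must be shown to dominate. As a sanity check, the homogeneous (complete-graph, $V_{\infty}=y_{\infty} u$) reduction collapses to $\df{g}{t} = -R_0 g^{2} - (R_0-1)g \le -g^{2}$ for $R_0 > 1$, confirming the inequality there with room to spare. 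For general graphs the difficulty is the spatial heterogeneity and non-normality of the Jacobian $J = \tau A - I - \tau\,\mathrm{diag}(V_{\infty})A - \tau\,\mathrm{diag}(A V_{\infty})$, whose stability for $\tau > \tau_c^{(1)}(G)$ guarantees eventual exponential decay (through $W(t) \le e^{Jt}W(0)$ by a Metzler comparison, since the quadratic term only helps) but not the uniform-in-$t$ envelope $1/(1+t)$ during the transient. The most promising routes I see are to seek a Lyapunov function for the $W$-dynamics — adapting the known Lyapunov functions establishing global stability of the endemic NIMFA equilibrium — that directly certifies the weighted quadratic bound, or alternatively to dominate the $W$-system by a homogenised surrogate NIMFA process at its own threshold and invoke Conjecture~\ref{thm:decaybound1}.
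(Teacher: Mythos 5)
First, a point of reference: the paper does \emph{not} prove this statement. It is presented as Conjecture~\ref{conjecture:taugroterdantauc}, supported only by the simulations in Fig.~\ref{fig:conjecturefigure}; the only analytical material offered, Appendix~\ref{app:substantationconjecture}, concerns the companion Conjecture~\ref{thm:decaybound1} (decay at or below the threshold), and even that is a proof outline resting on the numerically verified but unproven inequalities \eqref{eq:requirementonc} and \eqref{eq:requirementonxi}. So your proposal must stand on its own, and it does not. Your first step is sound: since $V_{\infty}\leq V(0)\leq u$ componentwise, Theorem~\ref{thm:prassedevriendt} (applied per connected component) sandwiches the trajectory between the constant solution $V_{\infty}$ and the trajectory started at $u$, which resolves the absolute values and yields the first inequality. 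The genuine gap is the second inequality: you reduce it to the pointwise Riccati inequality $\df{g}{t}\leq -g^{2}$ for $g(t)=y(t;G,\tau,u)-y_{\infty}$, equivalently the quadratic inequality in $W=V-V_{\infty}$ that you display, and you correctly concede that you cannot verify it — the linear form $\bigl(\tau u^{T}A-u^{T}-2\tau(AV_{\infty})^{T}\bigr)W$ is not sign-definite, and nothing in your argument shows that the nonpositive quadratic terms dominate it along the trajectory. That unproven inequality \emph{is} the content of the conjecture for decaying processes, so the proposal is a reformulation rather than a proof.

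Beyond being unfinished, the reduction may be a dead end: requiring $\df{g}{t}\leq -g^{2}$ at every time is strictly stronger than the integrated envelope $g(t)\leq \frac{1}{1+t}$ that the conjecture asserts (the former implies the latter when $g(0)\leq 1$, but not conversely), so it is conceivable that the conjecture is true while your differential inequality fails transiently on heterogeneous graphs — in which case no amount of work on the quadratic form would close the argument. It is instructive to contrast this with the route the paper sketches for Conjecture~\ref{thm:decaybound1}: decompose $V(t)=c(t)x_{1}+\xi(t)$ along the principal eigenvector, bound $c(t)$ and $\|\xi(t)\|_{2}$ \emph{separately} by $\frac{1}{1+t}$ envelopes (integrated bounds, not pointwise differential ones), and recombine via Cauchy--Schwarz. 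If you want to push further, adapting that spectral decomposition to the endemic equilibrium — i.e., to $W=V-V_{\infty}$ with the Jacobian $J$ you wrote down, in the spirit of \cite{prasse2020time} — is more promising than the scalar Riccati route; but be aware that even the paper's own outline leaves its key inequalities numerically supported and unproven, which is precisely why the statement remains a conjecture.
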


Simulations in Fig. \ref{fig:conjecturefigure} support Conjectures \ref{thm:decaybound1} and \ref{conjecture:taugroterdantauc} and suggest that the prevalence $y(t;K_N,\tau^{(1)}_c(K_N),u)$ upper bounds not only decay processes with effective infection rate $\tau \leq \tau_c^{(1)}$, but converges slower to the steady-state prevalence $y_{\infty}$ than any decay process, for any effective infection rate $\tau$. In Fig. \ref{fig:conjecturefigure}, the dashed red line represents $y(t;K_N,\tau^{(1)}_c(K_N),u) = \frac{1}{1+t}$. Each of the 100 blue lines per sub-figure corresponds to a decay process starting in $y(0) = u$ on an ER-graph. In each figure, $\tau$ changes as a multiple of $\tau^{(1)}_{c}(K_N)$ and is the same for each graph. In addition to the ER-graphs in Fig. \ref{fig:conjecturefigure}, the decay on several graphs with a fixed structure (including, for example, the path or star graph on $N$ nodes) was simulated for varied effective infection rates $\tau$. On the graphs with fixed structure, the decay from $y(0) =1$ to the steady-state prevalence $y_{\infty}$ is also faster than on $K_N$ with $\tau = \tau_c^{(1)}(K_N)$.
\begin{figure}[ht]
    \centering
    \begin{subfigure}[]{0.2\textwidth}
        \centering
        \includegraphics[width=\linewidth]{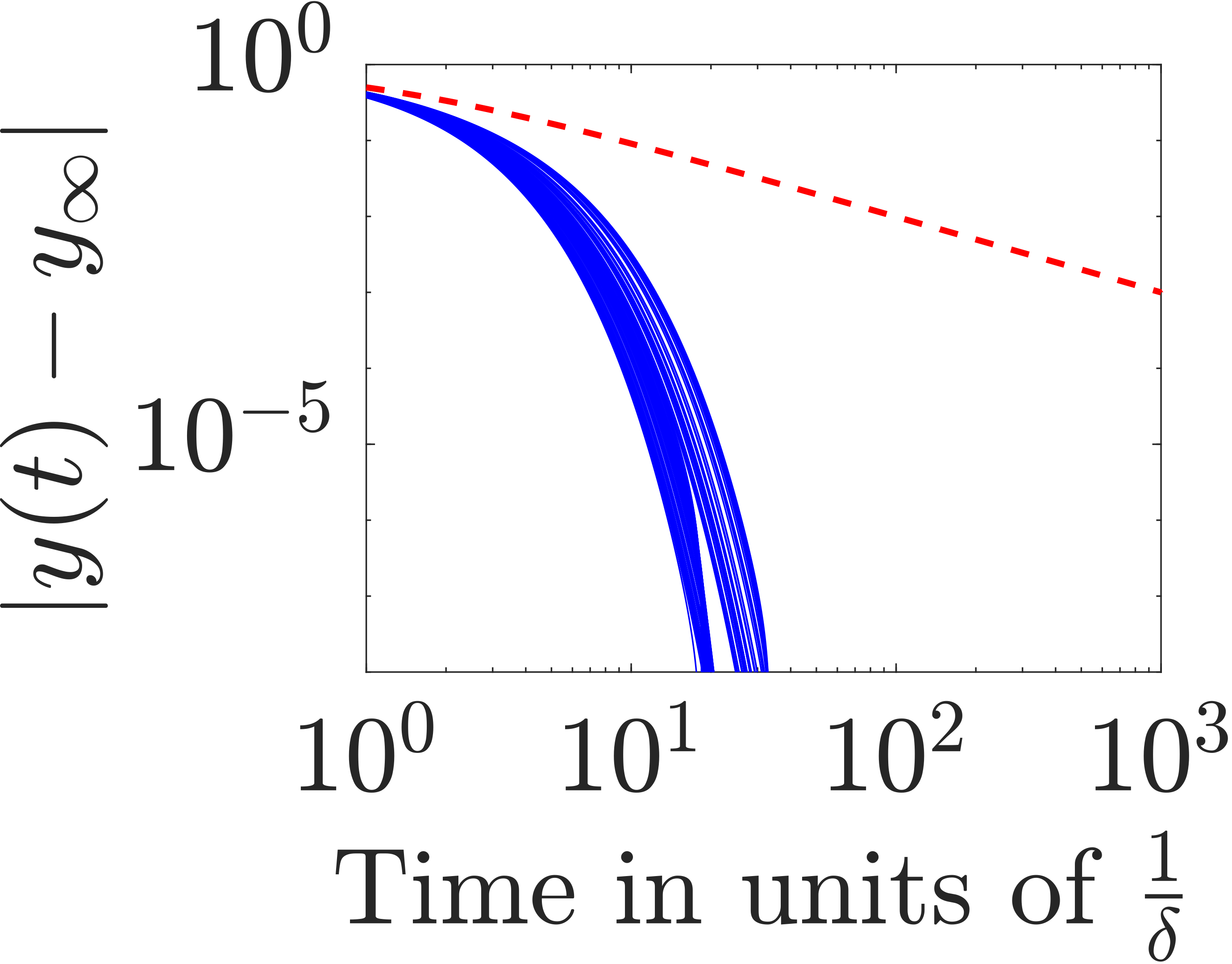} 
        \caption{$\tau = \frac{1}{2}\tau_c^{(1)}(K_N)$} \label{fig:conj1}
    \end{subfigure}
    \begin{subfigure}[]{0.2\textwidth}
        \centering
        \includegraphics[width=\linewidth]{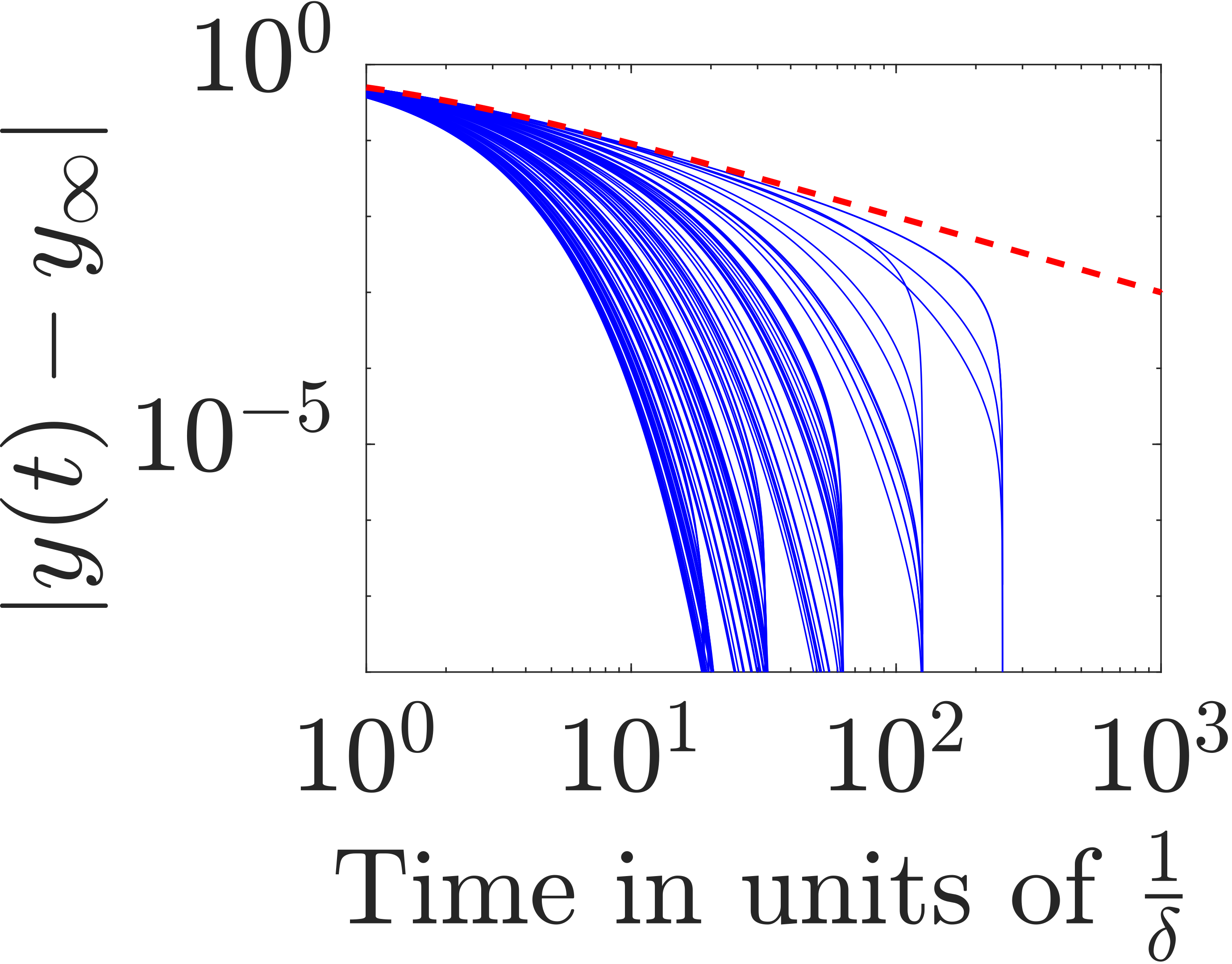} 
        \caption{$\tau = \tau_c^{(1)}(K_N)$} \label{fig:conj2}
    \end{subfigure}
    
    \begin{subfigure}[]{0.2\textwidth}
    \centering
        \includegraphics[width=\linewidth]{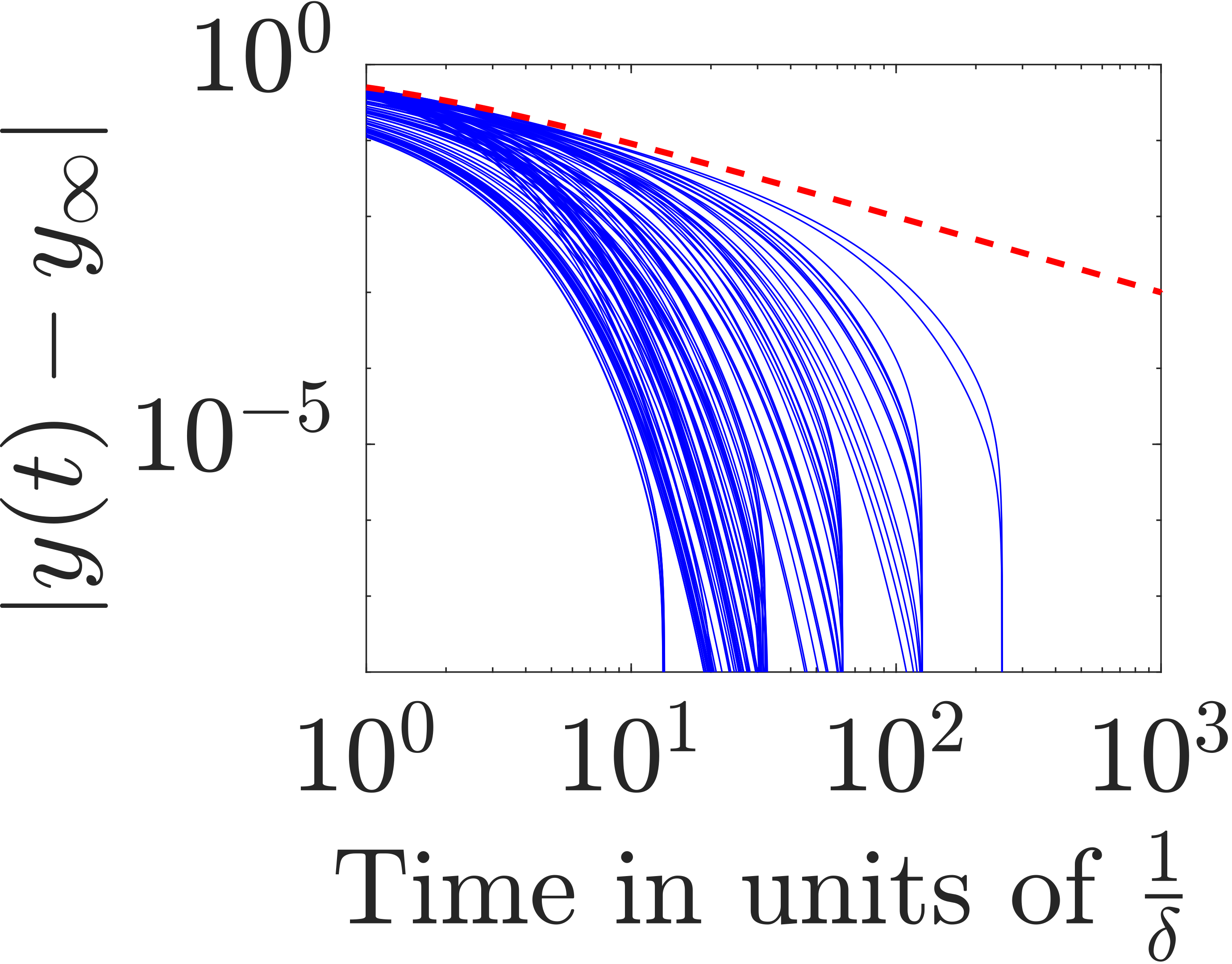} 
        \caption{$\tau = 2\tau_c^{(1)}(K_N)$} \label{fig:conj3}
    \end{subfigure}
    \begin{subfigure}[]{0.2\textwidth}
    \centering
        \includegraphics[width=\linewidth]{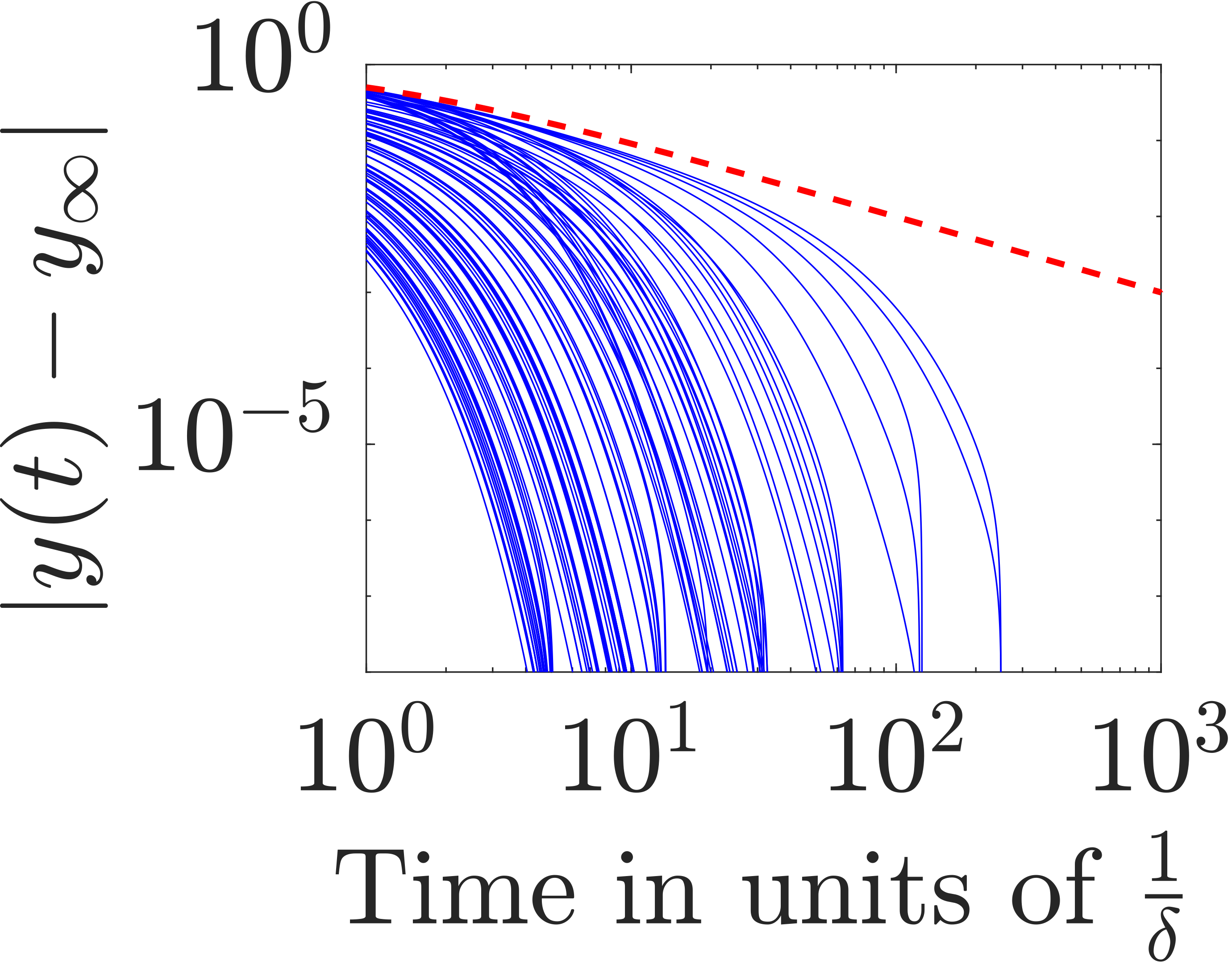} 
        \caption{$\tau = 5\tau_c^{(1)}(K_N)$} \label{fig:conj4}
    \end{subfigure}
    \caption{Error between the prevalence $y(t)$ and the steady-state prevalence $y_{\infty}$ (approximated by $y(t_{\text{max}}$)) versus time. The dashed red line is the function $y(t) = \frac{1}{1+t}$. Each of the blue curves corresponds to a decay process starting in $y(0)=u$ on} different $ER(N,p)$ graphs where $p \sim \text{Unif}(0,1)$. The effective infection rate $\tau$ is varied for the different figures. \label{fig:conjecturefigure}
\end{figure}

If Conjecture \ref{thm:decaybound1} is true, then the prevalence $y(t) = \frac{1}{1+t}$ upper bounds the prevalence of all epidemic processes with $R_0 \leq 1$. In particular, the bound holds for disconnected graphs with $R_0 \leq 1$, because each subgraph corresponding to a connected component will have $R_0 \leq 1$ and the prevalence of the disconnected graph is upper bounded by the largest of the subgraph prevalences. 
If Conjecture \ref{conjecture:taugroterdantauc} is true, then it follows that $y(t) = \frac{1}{1+t}$ upper bounds all decaying epidemic processes with $R_0 > 1$ as well. Assuming both conjectures are true, we can combine Conjecture \ref{thm:decaybound1} with Conjecture \ref{conjecture:taugroterdantauc} and invert the upper bound to find that every static NIMFA process with $V(0) \geq V_{\infty}$ will satisfy $|y(t) - y_{\infty}| \leq r$ at time $\frac{1-r}{r}$ such that
\begin{equation}\label{eq:bartr0leq1}
    \mathrm{\overline{T}}(r) \leq \frac{1-r}{r}.
\end{equation}

For effective infection rates $\tau \leq \tau_c^{(1)}$, we derive a second upper bound on $\mathrm{\overline{T}}(r)$ in Lemma \ref{lemma:secondupperbound}.
\begin{lemma}\label{lemma:secondupperbound}
Given a graph $G$ and $\tau < \tau^{(1)}_c(G)$, the prevalence $y(t;G,\tau,V(0))$ of the epidemic process on $G$ with effective infection rate $\tau$ and starting infection probability vector $V(0)$ satisfies
\begin{equation}\label{eq:thmdecaybound2}
y(t;G,\tau,V(0)) \leq y(t;G,\tau,u) \leq e^{(R_0(G,\tau) - 1)t},
\end{equation}
which leads to an upper bound on the upper-transition time $\mathrm{\overline{T}}(r)$:
\begin{equation}\label{eq:decayupperbound2}
    \mathrm{\overline{T}}(r) \leq \frac{1}{1- R_0(G,\tau)}\log\left(\frac{1}{r}\right).
\end{equation}
\end{lemma}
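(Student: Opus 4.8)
The plan is to prove the two inequalities of \eqref{eq:thmdecaybound2} in turn and then invert the exponential estimate. The first inequality follows immediately from Theorem \ref{thm:prassedevriendt}. Every admissible initial condition satisfies $v_i(0)\in[0,1]$, so $V(0)\leq u$ componentwise, while the all-infected state $u$ evolves under the \emph{same} graph $G$, curing rate $\delta=1$ and infection rate $\tau$. Applying Theorem \ref{thm:prassedevriendt} with the two systems taken identical except for their initial data, $\tilde V(0)=u\geq V(0)$ gives $\tilde V(t)\geq V(t)$ for all $t$; averaging over the nodes yields $y(t;G,\tau,V(0))\leq y(t;G,\tau,u)$.

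For the second inequality I would linearise \eqref{NIMFASISdelta1}. Bounding $1-v_i(t)\leq 1$ in each nodal equation gives, with $\delta=1$,
\[
\frac{\mathrm{d}v_i}{\mathrm{d}t}=-v_i+\tau(1-v_i)\sum_{j=1}^{N}a_{ij}v_j\leq -v_i+\tau\sum_{j=1}^{N}a_{ij}v_j,
\]
so that $\frac{\mathrm{d}V}{\mathrm{d}t}\leq(\tau A-I)V$ holds componentwise. The matrix $M=\tau A-I$ is Metzler, since its off-diagonal entries $\tau a_{ij}\geq 0$; consequently $e^{Mt}$ is entrywise nonnegative for $t\geq 0$. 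Writing the differential inequality as $\dot V-MV=g(t)\leq 0$ and using variation of parameters, $V(t)=e^{Mt}V(0)+\int_0^t e^{M(t-s)}g(s)\,\mathrm{d}s\leq e^{Mt}V(0)$, because the integrand is a nonnegative matrix times a nonpositive vector. Hence $V(t)\leq e^{(\tau A-I)t}V(0)$ componentwise, and choosing $V(0)=u$ and multiplying by $u^{T}\geq 0$ gives
\[
y(t;G,\tau,u)\leq \frac{1}{N}\,u^{T}e^{(\tau A-I)t}u=\frac{e^{-t}}{N}\,u^{T}e^{\tau A t}u.
\]

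The key step is the spectral estimate of $u^{T}e^{\tau A t}u$, which is where the symmetry of $A$ is essential. Diagonalising $A=\sum_{k}\lambda_k x_k x_k^{T}$ in an orthonormal eigenbasis with $\lambda_1\geq\cdots\geq\lambda_N$, and using $\tau,t>0$,
\[
u^{T}e^{\tau A t}u=\sum_{k=1}^{N}e^{\tau\lambda_k t}\,(u^{T}x_k)^2\leq e^{\tau\lambda_1 t}\sum_{k=1}^{N}(u^{T}x_k)^2=N\,e^{\tau\lambda_1 t},
\]
the last equality being Parseval's identity $\sum_k (u^{T}x_k)^2=\|u\|_2^2=N$. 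Substituting and recalling $R_0(G,\tau)=\tau\lambda_1(A)$ yields $y(t;G,\tau,u)\leq e^{(\tau\lambda_1-1)t}=e^{(R_0-1)t}$, which together with the first inequality establishes \eqref{eq:thmdecaybound2}.

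Finally I would invert the bound. As $\tau<\tau_c^{(1)}(G)$ we have $R_0(G,\tau)<1$, hence $y_\infty=0$ and $|y(t)-y_\infty|=y(t)$. For every admissible $V(0)\geq ru$ we then have $y(t;G,\tau,V(0))\leq e^{(R_0-1)t}$; requiring the right-hand side to be at most $r$ and solving $(R_0-1)t\leq\log r$ (with $R_0-1<0$) gives $t\geq\frac{1}{1-R_0}\log\frac{1}{r}$. By the definition \eqref{eq:deftbar} of $\mathrm{\overline{T}}(r)$, this proves $\mathrm{\overline{T}}(r)\leq\frac{1}{1-R_0(G,\tau)}\log\left(\frac{1}{r}\right)$. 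I expect the only real obstacle to be the rigorous justification of the componentwise comparison $V(t)\leq e^{(\tau A-I)t}V(0)$: one must invoke the positivity of the exponential of the Metzler matrix $\tau A-I$ (equivalently, the cooperativity of the linearised field) rather than manipulate the scalar inequalities naively, and one must keep the decomposition in the \emph{symmetric} eigenbasis to obtain the exact prefactor $1$ in $e^{(R_0-1)t}$.
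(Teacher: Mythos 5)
Your proof is correct and follows essentially the same route as the paper: drop the nonlinear term to obtain $\frac{\mathrm{d}V}{\mathrm{d}t} \leq (\tau A - I)V$, dominate the solution by the matrix exponential $e^{(\tau A - I)t}V(0)$, extract the spectral factor $e^{(R_0(G,\tau)-1)t}$, and invert to get the bound on $\mathrm{\overline{T}}(r)$. The only differences are technical detail: the paper passes through the norm chain $Ny(t)=\|V(t)\|_1 \leq \sqrt{N}\|V(t)\|_2$ together with $\|e^{(\tau A - I)t}\|_2 = e^{(R_0-1)t}$ instead of your Parseval evaluation of $u^{T}e^{\tau A t}u$, and it cites the comparison $V(t)\leq e^{(\tau A - I)t}V(0)$ from the literature where you supply the (welcome) Metzler-matrix justification explicitly.
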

\begin{proof}
    We start with the matrix NIMFA equation \eqref{sys_matrix} and upper bound the derivative of the infection probability vector $V(t)$ by disregarding the non-linear term. After rescaling time such that $\delta = 1$ we find
 \begin{equation}\label{eq:tussenstapproofupperbound2}
    \df{V(t)}{t} \leq (\tau A - I)V(t),
     \end{equation}
     whose solution is given in \cite{Van2013non} as:
    \begin{equation}
        V(t) \leq e^{(\tau A - I)t}V(0).
    \end{equation}
    After using the norm $||\cdot||_2$ we obtain: 
    \begin{eqnarray}
    \label{eq:1normV0}
       ||V(t)||_2 \leq ||V(0)||_2 \; e^{\lambda_1(\tau A - I)t} \nonumber \\ = ||V(0)||_2 \; e^{(\tau\lambda_1(A) - 1)t} = ||V(0)||_2 \; e^{(R_0(G,\tau) - 1)t},
    \end{eqnarray}
where the first inequality follows from:
$$
||e^Mx||_2 \leq ||e^M||_2||x||_2 \leq e^{||M||_2}||x||_2 = e^{\lambda_1(M)}||x||_2, 
$$
 where $x$ is a $N\times 1$ vector, $M$ is a positive and symmetric $N \times N$ matrix and $\lambda_1(M)$ is the spectral radius of $M$. 
Since $||V(t)||_1 = N y(t) \leq \sqrt{N}||V(t)||_2$, we obtain from \eqref{eq:1normV0}:
   \begin{equation}\label{eq:prevalencebound}
        y(t) \leq \frac{\sqrt{N}}{N}||V(t)||_2 \leq \frac{1}{\sqrt{N}}||V(0)||_2 \; e^{(R_0(G,\tau) -1)t}.
    \end{equation}
    Substituting $||V(0)||_2 = ||u||_2 = \sqrt{N}$ in \eqref{eq:prevalencebound} gives the right-hand side inequality in \eqref{eq:thmdecaybound2}. Evaluating \eqref{eq:thmdecaybound2} at the time $\mathcal{T} = \frac{1}{1- R_0(G,\tau)}\log\left(\frac{1}{r}\right)$ yields:
\begin{eqnarray*}
    y(\mathcal{T};G,\tau,V(0)) \leq e^{(R_0(G,\tau) -1)\frac{1}{1- R_0(G,\tau) }\log\left(\frac{1}{r}\right)} \nonumber\\ = e^{-\log\left(\frac{1}{r}\right)} = r,
\end{eqnarray*}
from which the upper bound \eqref{eq:decayupperbound2} follows.
\end{proof}

The upper bounds \eqref{eq:bartr0leq1} and \eqref{eq:decayupperbound2} complement each other, because the bound \eqref{eq:bartr0leq1} is accurate around $R_0 = 1$, where the bound \eqref{eq:decayupperbound2} diverges and the bound \eqref{eq:decayupperbound2} is tighter than \eqref{eq:bartr0leq1} for small $R_0$. We derive the value $\mathcal{R}$ of the basic reproduction number $R_0$ where the bounds intersect by solving $\frac{1}{1- R_0}\log\left(\frac{1}{r}\right) = \frac{1-r}{r}$ for $R_0$ and obtain 
\begin{equation}\label{eq:crossingpointbounds}
\mathcal{R} = 1- \frac{r}{1-r}\log\left(\frac{1}{r}\right).
\end{equation}
Since 
$$ \lim_{r \to 0} \frac{r}{1-r}\log\left(\frac{1}{r}\right) = 0,$$
the intersection $\mathcal{R}$ is closer to $R_0 = 1$ for stricter (smaller) accuracy tolerances $r$. For the accuracy tolerances $r$ used in our simulations, the interval where \eqref{eq:bartr0leq1} is tighter than \eqref{eq:decayupperbound2} is already negligibly small.

\subsection{The upper bound for growth to the endemic steady state}\label{sec:upperbounds2}
For fixed $\tau$ and graphs $G$ with $R_0(G,\tau) > 1$, we briefly recall theory deduced in \cite{prasse2020time}. 
\begin{assumption}[Assumption 1 in \cite{prasse2020time}]\label{assumption:prasse1}
Assume that $\delta_i > 0$ for all nodes $i$ and $\beta_{ij} \geq 0$ for all nodes $i,j$. Additionally, we assume in the limit of $R_0 \downarrow 1$ that it holds that $\delta_i \nrightarrow 0$ and $\delta_i \nrightarrow \infty$ for all nodes $i$.
\end{assumption}
Assumption \ref{assumption:prasse1} is trivially satisfied in our case, because we assume all $\delta_i=\delta=1$, and all $\beta_{ij}=\beta>0$. We will use results in the limit $R_0 \downarrow 1$ as an approximation for the case when $R_0 > 1$. Since the upper-transition time is largest around the epidemic threshold, this approximation should be good for these $R_0$ values of interest. The precise mathematical description of the limit $R_0 \downarrow 1$ can be found in \cite{prasse2020time}.

\begin{assumption}[Assumption 3 in \cite{prasse2020time}]\label{assumption:prasse2}
For every basic reproduction number $R_0 > 1$, the infection matrix $B$ is symmetric and irreducible. Furthermore, in the limit $R_0 \downarrow 1$, the infection rate matrix $B$ converges to a symmetric and irreducible matrix. This holds if and only if $B$ (and its limit) corresponds to a connected undirected graph.
\end{assumption}
Assumption \ref{assumption:prasse2} only considers connected graphs \cite{van2014performance} directly. If a graph has disconnected components, the analysis can be applied to the connected components separately, because the connected components are independent. Hence, it is natural to assume, without loss of generality, that the graph is connected.

\begin{lemma}[Corollary 1 in \cite{prasse2020time}] \label{lemma:prasse1} 
Suppose that Assumptions 1 and 2 hold and that the initial viral state $V(0)$ equals $V(0) = \xi_0V_{\infty}$ for some scalar $\xi_0 \in (0,1)$. Then, for any scalar $\xi_1 \in [\xi_0, 1)$ the largest time $\hat{t}$ at which the viral state satisfies $V(\hat{t}) \leq \xi_1v_{\infty,i}$ for every node $i$ converges to
\begin{equation}\label{eq:t01}
\hat{t} = \frac{\tau_c}{(\tau - \tau_c)\delta} \log \left(\frac{\xi_1}{\xi_0}\frac{1-\xi_0}{1-\xi_1}\right),  
\end{equation}
when the basic reproduction number $R_0$ approaches $1$ from above.
\end{lemma}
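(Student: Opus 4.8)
The plan is to exploit the fact that, in the limit $R_0 \downarrow 1$, the full $N$-dimensional NIMFA dynamics collapse onto a one-dimensional slow manifold spanned by the principal eigenvector of $A$, on which the prevalence obeys a logistic equation whose explicit solution yields \eqref{eq:t01}. Let $x_1$ denote the positive principal eigenvector of $A$, normalised by $x_1^T x_1 = 1$, so that $A x_1 = \lambda_1(A) x_1$ and $\tau_c = 1/\lambda_1(A)$. Near the threshold the endemic steady state is small and aligned with $x_1$, that is $V_{\infty} = \eta_{\infty} x_1 + o(\eta_{\infty})$ with $\eta_{\infty} = O(R_0 - 1)$. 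Because the non-principal eigenmodes of the linearisation $\beta A - \delta I$ carry growth rates $\delta(\tau\lambda_k - 1) < 0$ for $k \geq 2$ (strictly negative for $R_0$ near $1$), any transverse component of $V(t)$ is damped on an $O(1)$ timescale, whereas the principal mode grows on the much slower timescale $O\bigl(1/(R_0-1)\bigr)$. Since the prescribed initial condition $V(0) = \xi_0 V_{\infty}$ already lies, to leading order, in the $x_1$-direction, I would argue that $V(t) = \zeta(t) V_{\infty}$ up to higher-order corrections throughout the evolution. This alignment is what collapses the componentwise condition $V(\hat t) \leq \xi_1 v_{\infty,i}$ to a single scalar threshold on $\zeta$.

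The second step is to derive the scalar law for $\zeta$. Writing $V = \eta x_1$ and projecting the matrix NIMFA equation \eqref{sys_matrix} onto $x_1$ gives $\dot\eta = \delta(R_0-1)\eta - b\eta^2$, where $b = \beta\lambda_1(A)\sum_i (x_1)_i^3 > 0$ collects the cubic self-interaction coming from the $\mathrm{diag}(V)AV$ term. This is a logistic equation with carrying capacity $\eta_{\infty} = \delta(R_0-1)/b$, consistent with $V_{\infty} = \eta_{\infty} x_1$. Rescaling $\zeta = \eta/\eta_{\infty}$ eliminates $b$ and produces the normalised logistic law
\begin{equation}\label{eq:logisticzeta}
\df{\zeta}{t} = \delta(R_0 - 1)\,\zeta(1-\zeta), \qquad \zeta(0) = \xi_0 .
\end{equation}
I emphasise that $b$ drops out entirely, which is precisely why the final formula \eqref{eq:t01} depends only on $R_0$ (equivalently on $\tau,\tau_c,\delta$) and on the boundary fractions $\xi_0,\xi_1$, and not on the detailed structure of $G$.

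The third step is to integrate \eqref{eq:logisticzeta}. Separating variables yields $\zeta/(1-\zeta) = \bigl(\xi_0/(1-\xi_0)\bigr)e^{\delta(R_0-1)t}$. Since $\zeta$ is strictly increasing from $\xi_0$ toward $1$, the largest time at which $V(t) \leq \xi_1 v_{\infty,i}$ for every node $i$ is exactly the instant $\hat t$ at which $\zeta(\hat t) = \xi_1$. Solving gives
\begin{equation*}
\hat t = \frac{1}{\delta(R_0-1)}\log\!\left(\frac{\xi_1}{\xi_0}\frac{1-\xi_0}{1-\xi_1}\right),
\end{equation*}
and substituting $R_0 - 1 = (\tau - \tau_c)/\tau_c$ reproduces \eqref{eq:t01}.

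The main obstacle is the first step: rigorously justifying that the evolution stays confined to the principal direction and that the neglected terms vanish uniformly as $R_0 \downarrow 1$. The ansatz $V = \eta x_1$ is not exactly invariant, since the nonlinearity forces a transverse component of order $\eta^2 = O\bigl((R_0-1)^2\bigr)$; controlling this coupling between the damped transverse modes and the slow mode over the long $O\bigl(1/(R_0-1)\bigr)$ horizon requires the center-/slow-manifold reduction established in \cite{prasse2020time} under Assumptions \ref{assumption:prasse1} and \ref{assumption:prasse2}. Invoking that analysis supplies the precise meaning of the ``converges to'' in the statement and closes the argument.
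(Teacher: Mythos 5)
The first thing to note is that the paper contains no proof of this lemma at all: it is imported verbatim as Corollary 1 of \cite{prasse2020time}, so the only meaningful comparison is with the analysis in that reference. Your sketch reconstructs, correctly at the formal level, exactly the mechanism underlying the cited result: near the threshold the endemic state aligns with the principal eigenvector $x_1$ with magnitude $\eta_\infty = \delta(R_0-1)/b$, where $b = \beta\lambda_1(A)\sum_i (x_1)_i^3$; projecting \eqref{sys_matrix} onto $x_1$ gives the logistic law $\dot\eta = \delta(R_0-1)\eta - b\eta^2$; the normalisation $\zeta = \eta/\eta_\infty$ removes $b$; and integrating and substituting $R_0 - 1 = (\tau-\tau_c)/\tau_c$ recovers \eqref{eq:t01}. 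The algebra checks out, and collapsing the componentwise condition $v_i(\hat t) \leq \xi_1 v_{\infty,i}$ to the single scalar condition $\zeta(\hat t)=\xi_1$ is legitimate once alignment holds, by strict monotonicity of the logistic solution. The genuine analytic content --- showing that the transverse components, forced at order $\eta^2 = O\bigl((R_0-1)^2\bigr)$ and damped at an $O(1)$ rate, stay $o(\eta_\infty)$ uniformly over the slow horizon $O\bigl(1/(R_0-1)\bigr)$, which is what gives the phrase ``converges to'' its meaning --- is precisely what \cite{prasse2020time} establishes (convergence of the viral state to a scaled logistic curve as $R_0 \downarrow 1$), and you correctly identify this as the one missing step and defer to that reference. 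Since the paper itself defers the entire lemma to the same reference, your proposal is no less self-contained than the paper's treatment; judged as a standalone proof, it is a correct reduction whose single hard step is outsourced exactly where the original authors placed it.
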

\begin{lemma}[Equation 24 in \cite{prasse2020time}] \label{lemma:prasse2}
Given Assumptions 1 and 2 and that the initial viral state $V(0)$ is small or parallel to the steady-state vector $V_{\infty}$ we have that
$$
\frac{v_i(t)}{v_j(t)} \rightarrow \frac{(x_1)_i}{(x_1)_j},
$$
at every time $t$ when $R_0 \downarrow 1$, where $x_1$ is the non-negative eigenvector corresponding to the largest eigenvalue of the adjacency matrix $A$.
\end{lemma}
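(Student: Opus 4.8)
The plan is to work in the orthonormal eigenbasis $\{x_1,\dots,x_N\}$ of the symmetric adjacency matrix $A$, ordered so that $\lambda_1 > \lambda_2 \geq \cdots \geq \lambda_N$ (the strict spectral gap $\lambda_1 > \lambda_2$ is guaranteed by Perron--Frobenius, since Assumption~\ref{assumption:prasse2} restricts us to connected, hence irreducible, graphs), and to exploit the separation of timescales between the principal mode and all transverse modes as $R_0 \downarrow 1$. First I would decompose the viral state as $V(t) = c(t)x_1 + W(t)$ with $W(t) \perp x_1$, and project the rescaled equation \eqref{sys_matrix} (with $\delta = 1$, $\beta = \tau$) onto $x_1$ and onto its orthogonal complement. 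Using $Ax_1 = \lambda_1 x_1$ and $\tau\lambda_1 - 1 = R_0 - 1$, the principal coordinate obeys $\dot c = (R_0 - 1)c - \tau\, x_1^\top \operatorname{diag}(V)AV$, while the transverse part obeys $\dot W = (\tau A - I)\big|_{x_1^\perp} W - \tau P_\perp \operatorname{diag}(V)AV$, where $P_\perp = I - x_1 x_1^\top$ (here I use that $A$ symmetric makes $x_1^\perp$ invariant under $\tau A - I$).

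The key structural observation is that the operator $(\tau A - I)$ restricted to $x_1^\perp$ has top eigenvalue $\tau\lambda_2 - 1$, which tends to $\lambda_2/\lambda_1 - 1 < 0$ as $R_0 \downarrow 1$; this contraction rate stays bounded away from zero, whereas the principal rate $R_0 - 1 \to 0^+$ becomes neutral in the limit. Next I would establish the scaling $\|V(t)\| = O(R_0 - 1)$ for the admissible initial data, so that the quadratic forcing $\tau \operatorname{diag}(V)AV = O\big((R_0-1)^2\big)$. For $V(0)$ parallel to $V_\infty$ this uses that $V_\infty$ itself aligns with $x_1$ and is $O(R_0 - 1)$ near threshold: the steady-state relation $(\tau A - I)V_\infty = \tau\operatorname{diag}(V_\infty)AV_\infty$ has a forcing that is $O(\|V_\infty\|^2)$, which at leading order pushes $V_\infty$ into the near-kernel of $\tau A - I$, i.e.\ the $x_1$ direction. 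For small $V(0)$ the same conclusion follows from the linear regime together with the $x_1$-mode dominance.

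Combining the bounded transverse contraction with the $O\big((R_0-1)^2\big)$ forcing (and the higher-order transverse part of the initial data), variation of constants and a Gr\"onwall estimate give $\|W(t)\| = O\big((R_0-1)^2\big)$, while $c(t) = O(R_0 - 1)$. Hence $\|W(t)\|/c(t) = O(R_0 - 1) \to 0$, so $V(t)/\|V(t)\| \to x_1/\|x_1\|$ and therefore $v_i(t)/v_j(t) \to (x_1)_i/(x_1)_j$. This is precisely the transverse counterpart of the scalar logistic reduction along $x_1$ that yields Lemma~\ref{lemma:prasse1}.

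The hard part will be making the slaving estimate uniform: the principal (logistic) dynamics actually evolve on the slow horizon $t = O\big(1/(R_0-1)\big)$, consistent with the factor $\tau_c/(\tau - \tau_c)$ in \eqref{eq:t01}, and a naive Gr\"onwall bound degrades as this horizon grows. One must instead use the strictly negative transverse rate $\tau\lambda_2 - 1$ to close a self-consistent a~priori bound $\|W(t)\| = O\big((R_0-1)^2\big)$ that holds uniformly in $t$. A secondary subtlety concerns the phrase ``at every time $t$'': for data not lying exactly on the slow subspace, the transient $e^{(\tau A - I)|_{x_1^\perp}t}W(0)$ only decays on an $O(1)$ timescale, so the convergence to the direction $x_1$ is clean only once this initial layer is past; the hypothesis that $V(0)$ be parallel to $V_\infty$ (or small and essentially aligned) removes this layer and delivers the statement for all $t$.
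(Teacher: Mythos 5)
You should first know that the paper does not prove Lemma \ref{lemma:prasse2} at all: it is imported verbatim (``Equation 24'') from \cite{prasse2020time}, so there is no in-paper proof to compare against. The closest in-paper relative is Appendix \ref{app:substantationconjecture}, which uses exactly your decomposition $V(t) = c(t)x_1 + \xi(t)$ with $\xi(t)\perp x_1$ and cites Theorem 2 of \cite{prasse2020time} for the transverse bound. Your plan --- project \eqref{sys_matrix} onto $x_1$ and onto $x_1^{\perp}$, use the Perron--Frobenius spectral gap so that the transverse rate $\tau\lambda_2 - 1 \to \lambda_2/\lambda_1 - 1 < 0$ stays bounded away from zero while the principal rate $R_0 - 1$ degenerates, bound the quadratic forcing by $O\big((R_0-1)^2\big)$, and close a uniform-in-time slaving estimate by bootstrap --- is essentially the argument of that source, and you correctly identify the uniformity of the slaving estimate as the real work.

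Two concrete repairs are needed before your sketch is a proof. First, the conclusion requires $||W(t)||_2/c(t) \to 0$, but you only assert $c(t) = O(R_0-1)$, which is an \emph{upper} bound and hence the wrong direction for the denominator. You must show $c(t)$ is bounded \emph{below}, uniformly in $t$, by $\Theta(R_0-1)$; this follows, e.g., from the monotone logistic growth of the principal mode from $c(0) = \xi_0\,\Theta(R_0-1)$, or from the order preservation of Theorem \ref{thm:prassedevriendt} applied to $V(0) = \xi_0 V_{\infty} \leq V_{\infty}$ (which also yields your a priori bound $||V(t)||_2 = O(R_0-1)$ for free). Without that lower bound, $||W(t)||_2 = O\big((R_0-1)^2\big)$ proves nothing about the ratio. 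Second, the claim that the ``small $V(0)$'' case ``follows from the linear regime together with $x_1$-mode dominance'' is false as written: for small data not essentially aligned with $x_1$ (say concentrated on one node $k$), at a fixed finite time $t$ the linear regime gives $v_i(t)/v_j(t) \to [e^{(A/\lambda_1 - I)t}]_{ik}/[e^{(A/\lambda_1 - I)t}]_{jk}$, which differs from $(x_1)_i/(x_1)_j$. So either ``small'' must be read as ``small and essentially aligned'', guaranteeing $||W(0)||_2/c(0) \to 0$ as $R_0 \downarrow 1$ (your parenthetical is a necessity, not a convenience, and it contradicts your earlier sentence), or ``at every time $t$'' must be read on the slow timescale $(R_0-1)t$ fixed, where the $O(1)$ transverse transient has already died --- which is the reading your final paragraph gestures at. Finally, a small point: Perron--Frobenius is needed twice, once for the gap $\lambda_1 > \lambda_2$ and once for $(x_1)_j > 0$, so that the limiting ratio is well defined.
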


We will use Lemma \ref{lemma:prasse1} and Lemma \ref{lemma:prasse2} to upper bound the growth towards the steady state $V_{\infty}(G)$ in Lemma \ref{lemma:upperboundbart1}. Lemma \ref{lemma:prasse1} gives an expression for the convergence time to the steady state $\hat{t}$ up to a proportionality tolerance $\xi_1$ in the limit $R_0 \downarrow 1$. Lemma \ref{lemma:prasse2} ensures that we can pick the proportionality tolerance $\xi_1$ in such a way that the convergence time $\hat{t}$ implies $y(\hat{t}) \geq y_{\infty} -r$ in the limit $R_0 \downarrow 1$. The convergence time to the steady state $\hat{t}$ is then an upper bound for the upper-transition time $\mathrm{\overline{T}}(r)$ in the limit $R_0 \downarrow 1$. We derive this bound, because the upper-transition time $\mathrm{\overline{T}}(r)$ is largest for graphs around $R_0 = 1$.  However, as shown in Section \ref{sec:combination}, the bound holds for all $R_0 > 1$ in our numerical simulations. The unexpected accuracy of the bound (far) above the epidemic threshold is either because the error in $\hat{t}$ is negative for large $R_0$ (making \eqref{eq:t01} an upper bound for all $R_0 > 1$) or because the additional bounding steps in the derivation are large enough for the bound to hold for large $R_0$. In the following, we assume, without loss of generality, that the nodes $i$ are labeled such that $v_{\infty,1} \geq v_{\infty,2} \geq \dots \geq v_{\infty,N}$ and write $d_{\textnormal{max}}$ for the highest degree in the graph $G$.

\begin{lemma}[Upper bound on $\mathrm{\overline{T}}(r)$ for growth]\label{lemma:upperboundbart1}
 For a connected graph $G$, it holds that for all $V(0)$ with $v_i(0) \in [r,v_{\infty,i}]$, assuming $|y(0) - y_{\infty}| > r$, the upper-transition time $\mathrm{\overline{T}}(r)$; the first time such that $|y(\mathrm{\overline{T}}(r)) - y_{\infty}| \leq r$, is bounded by
\begin{eqnarray}\label{eq:lemmagrowthequation}
 \mathrm{\overline{T}}(r) \leq
    \frac{2}{R_0(G,\tau) - 1} \log \left(\frac{v_{\infty_1}-r}{r}\right) \nonumber\\ 
    \leq \frac{2}{R_0(G,\tau) - 1} \log \left(\frac{\tau d_{\textnormal{max}}}{r\left(\tau d_{\textnormal{max}}+1\right)}-1\right),  
\end{eqnarray}
when the basic reproduction number $R_0 $ approaches $1$ from above. When $|y(0) - y_{\infty}| \leq r$, the upper-transition time $\mathrm{\overline{T}}(r) = 0$.
\end{lemma}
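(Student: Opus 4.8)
The plan is to use the comparison principle (Theorem \ref{thm:prassedevriendt}) to reduce to the slowest growth trajectory, sandwich that trajectory between two trajectories launched from initial conditions \emph{proportional} to $V_\infty$, and then read off the convergence time from the closed form in Lemma \ref{lemma:prasse1}. First I would observe that among all admissible initial conditions with $v_i(0) \in [r, v_{\infty,i}]$, the pointwise-smallest is $V(0) = ru$. By Theorem \ref{thm:prassedevriendt}, any trajectory started from $V(0) \geq ru$ dominates the $ru$-trajectory componentwise for all $t$, so the $ru$-trajectory grows slowest towards $V_\infty$; on the growth branch it therefore determines $\mathrm{\overline{T}}(r)$, and it suffices to bound its convergence time.

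Next I would convert the scalar target into a componentwise one. Since $y(t)\leq y_\infty$ in growth, the condition $|y(t)-y_\infty|\leq r$ is equivalent to $y(t)\geq y_\infty-r$, and by averaging this is implied by $v_i(t)\geq v_{\infty,i}-r$ for every node $i$. To apply Lemma \ref{lemma:prasse1}, which is phrased for states parallel to $V_\infty$, I would pick $\xi_0 = r/v_{\infty,1}$ so that $\xi_0 V_\infty \leq ru$ componentwise (using $v_{\infty,i}\leq v_{\infty,1}$); Theorem \ref{thm:prassedevriendt} then places the trajectory from the parallel state $\xi_0 V_\infty$ below the $ru$-trajectory, so \emph{its} convergence time upper bounds $\mathrm{\overline{T}}(r)$. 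For the target level I would take $\xi_1 = (v_{\infty,1}-r)/v_{\infty,1}$. Lemma \ref{lemma:prasse2} guarantees that, in the limit $R_0 \downarrow 1$, this parallel trajectory stays proportional to $V_\infty$, so that reaching $V = \xi_1 V_\infty$ forces $v_i = \xi_1 v_{\infty,i} \geq v_{\infty,i}-r$ for all $i$ (the binding node being $i=1$), and hence $y\geq y_\infty-r$.

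Substituting $\xi_0$ and $\xi_1$ into the time $\hat t$ of Lemma \ref{lemma:prasse1} then gives the result: with $\delta=1$ the prefactor is $\tau_c^{(1)}/(\tau-\tau_c^{(1)}) = 1/(R_0-1)$, and the two symmetric choices yield $\frac{\xi_1}{1-\xi_1} = \frac{1-\xi_0}{\xi_0} = \frac{v_{\infty,1}-r}{r}$, so the logarithm's argument becomes $\left(\frac{v_{\infty,1}-r}{r}\right)^2$ and the factor $2$ in \eqref{eq:lemmagrowthequation} appears. The second inequality follows by monotonicity of $\log$ from the uniform bound $v_{\infty,1} \leq \frac{\tau d_{\textnormal{max}}}{1+\tau d_{\textnormal{max}}}$, which I would derive from the NIMFA fixed-point relation $v_{\infty,i} = \tau(1-v_{\infty,i})\sum_j a_{ij}v_{\infty,j} \leq \tau(1-v_{\infty,i})d_{\textnormal{max}}$ and solving for $v_{\infty,i}$. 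The degenerate case $|y(0)-y_\infty|\leq r$ gives $\mathrm{\overline{T}}(r)=0$ directly from the definition \eqref{eq:deftbar}.

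The main obstacle is the asymptotic nature of the two borrowed results: Lemma \ref{lemma:prasse1} and Lemma \ref{lemma:prasse2} hold only in the limit $R_0 \downarrow 1$, so the delicate step is justifying that the two sandwiching parallel trajectories genuinely bound the prevalence of the $ru$-trajectory, and that combining the $\xi_0$ and $\xi_1$ gaps into a clean factor of two is legitimate rather than an artifact of stacking two limits. (As the surrounding text notes, the resulting bound, though derived for $R_0 \downarrow 1$, is observed numerically to remain valid for all $R_0 > 1$, which is consistent with the bounding steps being conservative.)
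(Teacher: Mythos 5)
Your proposal is correct and follows essentially the same route as the paper's own proof: the reduction to the slowest trajectory $V(0)=ru$ via Theorem \ref{thm:prassedevriendt}, the choices $\xi_0 = r/v_{\infty,1}$ and $\xi_1 = 1-r/v_{\infty,1}$, the use of Lemma \ref{lemma:prasse2} to upgrade the single-node conclusion of Lemma \ref{lemma:prasse1} to all nodes (yielding the factor $2$ in the logarithm), and the handling of the degenerate case $|y(0)-y_\infty|\leq r$ are all exactly the paper's steps. The only cosmetic difference is that you derive $v_{\infty,1} \leq \tau d_{\textnormal{max}}/(1+\tau d_{\textnormal{max}})$ directly from the NIMFA fixed-point equation, whereas the paper cites this bound as equation (15) of \cite{VanMieghem2008Virusspreadinnetworks}.
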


\begin{proof}
By Theorem \ref{thm:prassedevriendt}, we only need to bound the case when $V(0) = ru$. We consider Lemma \ref{lemma:prasse1} and take $\xi_0 = \frac{r}{v_{\infty,1}}$ and $\xi_1 = (1- \frac{r}{v_{\infty,1}})$. For all nodes $i$, it holds that $\frac{v_{\infty,i}}{v_{\infty,1}} \leq 1$. We obtain:
$$
v_i(0) = \xi_0 v_{\infty,i} = r\frac{v_{\infty,i}}{v_{\infty,1}} \leq r,
$$ 
$$
\xi_1 v_{\infty,i} = \left(1-\frac{r}{v_{\infty,1}}\right)v_{\infty,i} = v_{\infty,i} - r \frac{v_{\infty,i}}{v_{\infty,1}} \geq v_{\infty,i} - r.
$$ 
With our chosen $\xi_0$ and $\xi_1$, the requirements $\xi_0 \in (0,1)$ and $\xi_1 \in [\xi_0, 1)$ in Lemma \ref{lemma:prasse1} become $\frac{r}{v_{\infty,1}} < 1$ and $\frac{r}{v_{\infty,1}} \leq 1 - \frac{r}{v_{\infty,1}}$, because $\frac{r}{v_{\infty,1}} > 0$ always holds. 

When either inequality does not hold, we argue that $|y(0)-y_{\infty}| < r$  and thus $\mathrm{\overline{T}}(r) = 0$. 

If $\frac{r}{v_{\infty,1}} \geq 1$, then we have $v_{\infty,1} \leq r$ and thus $|y(0)-y_{\infty}| \leq |r - v_{\infty,1}| < r$. Similarly, if $\frac{r}{v_{\infty,1}} > 1 - \frac{r}{v_{\infty,1}}$, then $\frac{r}{v_{\infty,1}} >\frac{v_{\infty,1} - r}{v_{\infty,1}}$, implying that $r > v_{\infty,1} - r$ and thus $|y(0)-y_{\infty}| \leq |r - v_{\infty,1}| < r$. 

In both cases, the upper-transition time $\mathrm{\overline{T}}(r)$ is zero, because the starting prevalence $y(0)$ is already closer to the steady-state prevalence $y_{\infty}$ than the accuracy tolerance $r$.

With the choice of $\xi_0 = \frac{r}{v_{\infty,1}}$ and $\xi_1 = (1- \frac{r}{v_{\infty,1}})$ the time $\hat{t}$ in (\ref{eq:t01}) is an upper bound for $\mathrm{\overline{T}}(r)$ in case of growth. Firstly, we have $v_i(0) \leq r$ for all $i$, which lower bounds the allowed starting conditions $v_i(0) \geq r$ from the definition (\ref{eq:deftbar}). Then, Theorem \ref{thm:prassedevriendt} states that the growth from $\xi_0V_{\infty}$ is slower than the slowest growth considered for the upper-transition time $\mathrm{\overline{T}}(r)$, namely the growth from $V(0) = ru$. Secondly, in the limit $R_0 \downarrow 1$, at time $\hat{t}$, we have $|v_i(\hat{t}) - v_{\infty,i}| \leq r$ for all nodes $i$. Lemma \ref{lemma:prasse1} states that $|v_i(\hat{t}) - v_{\infty,i}| \leq r$ holds for at least one node $i$, but Lemma \ref{lemma:prasse2} guarantees that, for all nodes $i$, the value of $\xi_1 v_{\infty,i}$ is reached at the same time $\hat{t}$, because the initial viral state vector $V(0)$ is parallel to the steady-state vector $V_{\infty}$. After substituting $\xi_0$ and $\xi_1$ into Lemma \ref{lemma:prasse1}, we find for $V(0) < V_{\infty}$ on connected graphs that
\begin{eqnarray}\label{eq:thateersteversie}
    \mathrm{\overline{T}}(r) \leq \hat{t} = \frac{1}{R_0(G,\tau) - 1} \log \left(\frac{(1-\frac{r}{v_{\infty,1}})^2}{(\frac{r}{v_{\infty,1}})^2}\right) \nonumber\\  = \frac{2}{R_0(G,\tau) - 1} \log \left(\frac{v_{\infty,1}-r}{r}\right) ,
\end{eqnarray}
where we have replaced $\frac{\tau_c}{(\tau - \tau_c)\delta}$ in \eqref{eq:t01} with $\frac{1}{R_0(G,\tau)-1}$, because we take $\delta = 1$ and 
\begin{eqnarray*}   
\frac{\tau_c^{(1)}(G)}{\tau - \tau_c^{(1)}(G)} = \frac{\frac{\tau}{R_0(G,\tau)}}{\tau - \frac{\tau}{R_0(G,\tau)}} = \nonumber\\ \frac{\tau}{R_0(G,\tau)\tau - \tau} = \frac{1}{R_0(G,\tau) -1}.
\end{eqnarray*}

Finally, we can upper bound $v_{\infty,1}$ invoking equation (15) from \cite{VanMieghem2008Virusspreadinnetworks}:
\begin{equation}\label{eq:vanmieghemvinfibound}
    0 \leq v_{\infty,i} \leq 1 - \frac{1}{1 + \tau d_i},
\end{equation}
where $d_i$ is the degree of node $i$. Equation \eqref{eq:vanmieghemvinfibound} holds for all nodes $i$ and thus also for the node with maximum degree such that $v_{\infty,1} \leq 1 - \frac{1}{1 + \tau d_{\text{max}}}$. Substituting this upper bound into (\ref{eq:thateersteversie}) proves the second inequality in (\ref{eq:lemmagrowthequation}) after simplification.
\end{proof}

\subsection{Summary: the general upper bound for time-variant networks}\label{sec:thegeneralupperbound}
We have conjectured and proved upper bounds for individual graphs on $\mathrm{\overline{T}}(r)$ for decay processes in (\ref{eq:bartr0leq1}) and \eqref{eq:decayupperbound2} and proved an upper bound for growth on connected graphs in the limit $R_0 \downarrow 1$ in Lemma \ref{lemma:upperboundbart1}. We combine the results and define an upper bound $\hat{T}_c(r,G)$ on $\mathrm{\overline{T}}(r)$ for a connected graph, specified by the subscript `$c$': 
\begin{eqnarray}\label{eq:combinedtbar}
    \hat{T}_c(r,G) = \frac{1}{1- R_0(G,\tau)}\log\left(\frac{1}{r}\right)\text{ if }R_0(G,\tau) \leq \mathcal{R},\nonumber\\
    \frac{1-r}{r} \text{ if }\mathcal{R} \leq R_0(G,\tau) \leq 1 \nonumber\\
    \max\left(\frac{1-r}{r},\frac{2}{R_0(G,\tau) - 1} \log \left(\frac{\tau d_{\textnormal{max}}}{r\left(\tau d_{\textnormal{max}}+1\right)}-1\right)\right) \nonumber\\
    \text{if }R_0(G,\tau) > 1 \nonumber\\
\end{eqnarray}
For disconnected simple graphs, the process on each of the components is completely independent. We can upper bound the convergence of the components separately using (\ref{eq:combinedtbar}). Consider a general graph $G$ with connected component subgraphs $C_1,\dots, C_K$, we define the upper bound $\hat{T}(r,G)$: 
\begin{equation}\label{eq:defupperbounddisconnected}
\hat{T}(r,G) = \max_{C_k} \left\{ \hat{T}_c(r,C_k) \right\}.
\end{equation}
Since definition (\ref{eq:defupperbounddisconnected}) also holds for connected graphs, we write our general upper bound for $\mathrm{\overline{T}}(r)$ for a set or sequence of graphs $\mathcal{G} = \{G_1,G_2,\dots,G_M\}$ as
\begin{equation}\label{eq:generalupperboundtbar}
    \mathrm{\overline{T}(r,\mathcal{G})} \leq \max_{G\in \mathcal{G}} \left\{ \hat{T}(r,G) \right\}= \max_{G\in \mathcal{G}} \left\{ \max_{C_k} \left\{ \hat{T}_c(r,C_k) \right\} \right\}.
\end{equation}

\subsection{Lower bounds for the upper-transition time}\label{sec:lowerbounds} In this section, we derive lower bounds on the slowest growth $V(0) = ru$ and decay $V(0) = u$ processes to compare with the upper bounds.
Inequality \eqref{eq:prevalencebound} allows us to deduce a non-trivial lower bound for $\mathrm{\overline{T}}(r)$ in growth processes:
\begin{lemma}
    Given a graph $G$, an effective infection rate $\tau > \tau_c^{(1)}(G)$ and an accuracy tolerance $r$, the upper-transition time $\mathrm{\overline{T}}(r)$ has a lower bound for growth given by
    \begin{equation}\label{eq:lowerboundgrowth}
        \mathrm{\overline{T}}(r) \geq \frac{1}{(R_0(G,\tau) -1)}\log\left(\frac{y_{\infty}(G) - r}{r}\right).
    \end{equation}
\end{lemma}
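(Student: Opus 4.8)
The plan is to invert the exponential upper bound on the prevalence from \eqref{eq:prevalencebound} into a lower bound on the time needed to reach the steady state. First I would invoke Theorem \ref{thm:prassedevriendt} to restrict attention to the extremal slow-growth initial condition $V(0) = ru$, since the upper-transition time for growth is governed by this worst case. The key observation is that \eqref{eq:prevalencebound}, which was obtained by discarding the non-negative non-linear term in \eqref{sys_matrix} and hence remains valid for every $\tau$ (in particular for $\tau > \tau_c^{(1)}(G)$), bounds the prevalence from \emph{above}; for a growing epidemic this upper bound constrains how quickly $y(t)$ can approach $y_{\infty}$ from below.

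Concretely, I would substitute $V(0) = ru$ into \eqref{eq:prevalencebound}. Using $\|ru\|_2 = r\sqrt{N}$, the prefactor $\frac{1}{\sqrt{N}}\|V(0)\|_2$ collapses to $r$, giving
\begin{equation*}
 y(t) \leq r\, e^{(R_0(G,\tau)-1)t}.
\end{equation*}
For growth, $y(t)$ increases monotonically towards $y_{\infty}$, so the upper-transition time is the first instant at which $y_{\infty} - y(t) = r$, i.e. $y(\mathrm{\overline{T}}(r)) = y_{\infty} - r$. Combining this equality with the displayed upper bound evaluated at $t = \mathrm{\overline{T}}(r)$ yields
\begin{equation*}
 y_{\infty} - r = y(\mathrm{\overline{T}}(r)) \leq r\, e^{(R_0(G,\tau)-1)\mathrm{\overline{T}}(r)}.
\end{equation*}

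Since $\tau > \tau_c^{(1)}(G)$ implies $R_0(G,\tau) > 1$, the coefficient $R_0(G,\tau)-1$ is strictly positive, so I can take logarithms and divide without reversing the inequality, obtaining \eqref{eq:lowerboundgrowth} immediately. The main point to handle carefully is the logical inversion itself: an \emph{upper} bound on $y(t)$ translates into a \emph{lower} bound on the transition time precisely because the target value $y_{\infty}-r$ cannot be attained before the dominating exponential $r\,e^{(R_0(G,\tau)-1)t}$ reaches $y_{\infty}-r$. I would also note the degenerate case $y_{\infty} - r \leq r$, for which the right-hand side of \eqref{eq:lowerboundgrowth} is non-positive and the bound is trivially satisfied by $\mathrm{\overline{T}}(r) \geq 0$, so no separate argument is required there.
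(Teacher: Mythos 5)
Your proof is correct and takes essentially the same route as the paper's: both reduce to the extremal initial condition $V(0)=ru$ via Theorem \ref{thm:prassedevriendt}, substitute $||ru||_2=r\sqrt{N}$ into \eqref{eq:prevalencebound} to obtain $y(t)\leq r\,e^{(R_0(G,\tau)-1)t}$, and invert this exponential at the level $y_{\infty}(G)-r$. The only difference is presentational: the paper defines the crossing time $\mathcal{T}$ by $r\,e^{(R_0(G,\tau)-1)\mathcal{T}}=y_{\infty}(G)-r$ and notes $y(\mathcal{T})\leq y_{\infty}(G)-r$, whereas you evaluate the bound at $t=\mathrm{\overline{T}}(r)$ where continuity forces $y=y_{\infty}(G)-r$; the two inversions are logically equivalent.
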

\begin{proof}
    Theorem \ref{thm:prassedevriendt} shows that the slowest growth starts in the lowest allowed starting probability vector $V(0) = r u$.  We have $||V(0)||_2 = ||ru||_2 = \sqrt{N} r$. Substitution in \eqref{eq:prevalencebound} gives
    $$
    y(t) \leq r e^{(R_0(G,\tau) -1)t},
    $$
    where $R_0(G,\tau) - 1 > 0$ is now positive, contrary to Lemma \ref{lemma:secondupperbound}. The time $\mathcal{T}$ such that 
    \begin{equation}\label{eq:tussenstaplowerbound}
     r e^{(R_0(G,\tau) -1)\mathcal{T}} = y_{\infty}(G) - r   
    \end{equation}
    bounds $\mathrm{\overline{T}}(r)$ from below, because $y(\mathcal{T}) \leq r e^{(R_0(G,\tau) -1)\mathcal{T}} = y_{\infty}(G) -r$.
    We solve \eqref{eq:tussenstaplowerbound} for $\mathcal{T}$ and find \eqref{eq:lowerboundgrowth}.
    \end{proof}
We also determine the following lower bound for decay processes.
\begin{lemma}
    Given a graph $G$, an effective infection rate $\tau$ and an accuracy tolerance $r$, the upper-transition time  $\mathrm{\overline{T}}(r)$ has a lower bound for the decay given by
    \begin{equation}\label{eq:lowerbounddecay}
        \mathrm{\overline{T}}(r) \geq \log\left(\frac{1}{y_{\infty}(G)+r}\right).
    \end{equation}
\end{lemma}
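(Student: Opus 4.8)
The plan is to mirror the structure of the growth lower bound, but to bound the decay of each coordinate \emph{from below}, since by Theorem~\ref{thm:prassedevriendt} the slowest decay starts at the all-infected state $V(0)=u$. Because $u \geq ru$, the state $V(0)=u$ is an admissible initial condition in the definition \eqref{eq:deftbar}, so the requirement $|y(t)-y_{\infty}|\leq r$ must hold at $t=\mathrm{\overline{T}}(r)$ in particular for the process started at $u$; hence any time by which this slowest process has not yet entered the band $[y_{\infty},y_{\infty}+r]$ is a lower bound on $\mathrm{\overline{T}}(r)$. Note that the linearised estimate \eqref{eq:prevalencebound} points the wrong way here: discarding the non-linear term \emph{over}-estimates $y(t)$, whereas for a lower bound on the \emph{time} I need a lower bound on $y(t)$, i.e.\ a cap on how fast the prevalence can fall.

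The key step is a coordinate-wise lower bound on the decay rate. Starting from the rescaled NIMFA equation \eqref{NIMFASISdelta1}, the infection term $\tau(1-v_i(t))\sum_{j}a_{ij}v_j(t)$ is non-negative, because the hypercube $[0,1]^N$ is forward-invariant (so $v_i(t)\in[0,1]$ and $1-v_i(t)\geq 0$ along the whole trajectory), while $A\geq 0$ and $V(t)\geq 0$. Discarding this non-negative term gives $\df{v_i(t)}{t}\geq -v_i(t)$ for every node $i$, and integrating this scalar differential inequality yields $v_i(t)\geq v_i(0)e^{-t}$. Setting $V(0)=u$ gives $v_i(t)\geq e^{-t}$ for all $i$, and averaging over the nodes via the definition \eqref{eq:prevalance} produces the prevalence bound $y(t)\geq e^{-t}$.

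Finally, I would substitute into the definition of the transition time. For decay we have $y(t)\geq y_{\infty}$, so $|y(t)-y_{\infty}|\leq r$ is equivalent to $y(t)\leq y_{\infty}+r$. Evaluating at $t=\mathrm{\overline{T}}(r)$ and combining with $y(t)\geq e^{-t}$ gives $e^{-\mathrm{\overline{T}}(r)}\leq y_{\infty}(G)+r$, and taking logarithms yields the claimed bound $\mathrm{\overline{T}}(r)\geq \log\!\left(\frac{1}{y_{\infty}(G)+r}\right)$. I do not anticipate a serious obstacle: the only points needing care are justifying the sign of the non-linear term through invariance of $[0,1]^N$ and confirming that $u$ is admissible under the constraint $V(0)\geq ru$, both of which are immediate, together with the elementary transfer from the coordinate-wise bound $v_i(t)\geq e^{-t}$ to the nodal-average prevalence $y(t)\geq e^{-t}$.
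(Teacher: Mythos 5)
Your proposal is correct and takes essentially the same route as the paper's proof: both discard the non-negative infection term to obtain the differential inequality $\df{v_i}{t} \geq -v_i$ (equivalently $\df{y}{t} \geq -y$), start from the all-infected state $V(0)=u$, conclude $y(t) \geq e^{-t}$, and read off the bound by solving $e^{-t} = y_{\infty}(G)+r$. The only cosmetic difference is that you integrate coordinate-wise and then average, whereas the paper sums over nodes first and applies Gr\"onwall's lemma directly to the prevalence.
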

\begin{proof}
    We consider a decay process and lower bound its prevalence by ignoring the infection process. We have at all times $t$ that $\df{y}{t} \geq - y$, by ignoring the non-negative second term in \eqref{NIMFASISdelta1} and summing over the infection probabilities $v_i$. Using Gr\"onwalls' Lemma \cite{gronwall1919note} we obtain $y(t) \geq y(0)e^{-t}$. The highest lower bound occurs when $y(0) = 1$ and we obtain $|y(t) - y_{\infty}(G)| \geq e^{-t} - y_{\infty}(G)$. We find that $t = \mathcal{T} = \log\left(\frac{1}{y_{\infty}(G)+r}\right)$ is the time such that $e^{-t} = y_{\infty}(G) +r$ and therefore a lower bound on $\mathrm{\overline{T}}(r)$ in a decay process, because $|y(\mathcal{T}) - y_{\infty}| \geq e^{-\mathcal{T}} -y_{\infty}(G) = r$.    
\end{proof}

\subsection{Evaluation of the bounds}\label{sec:combination} 
We simulate the upper-transition time $\mathrm{\overline{T}}(r)$, with $r=10^{-4}$ for the extreme values of $V(0)$, namely, $V(0) = u$ and $V(0) = ru$. Fig. \ref{fig:tbarcompare} shows the upper-transition time $\mathrm{\overline{T}}(r)$ together with the bounds from Section \ref{sec:bounds}. The lower bounds are indicated with the symbol $L_x$ and the upper bounds with $U_x$. Here $x$ is $D$ for decay or $G$ for growth. The upper transition times are indicated with $\mathrm{\overline{T}}_D$ and $\mathrm{\overline{T}}_G$. The upper bound $U_D$, given by \eqref{eq:decayupperbound2} and \eqref{eq:bartr0leq1}, is remarkably accurate for $R_0 \leq 1$, but is less sharp for $R_0 > 1$. This is expected, because the upper bound (\ref{eq:bartr0leq1}) is an upper bound of the asymptote at $R_0 =1$. The upper bound $U_G$, given by \eqref{eq:lemmagrowthequation}, is a solid bound for the upper-transition time $\mathrm{\overline{T}}_G$. Additionally, for large $R_0$, the upper bound $U_G$ approximates the upper-transition time well. The lower bound $L_G$, given by \eqref{eq:lowerboundgrowth}, unexpectedly fits the upper-transition time $\mathrm{\overline{T}}_D$ very well for $R_0 > 1$. The lower bound $L_D$, given by \eqref{eq:lowerbounddecay}, is weak. All bounds hold for all $R_0 \geq 0$. The upper bound $U_D$ and the lower bound $L_G$ are both derived from the same approximation \eqref{eq:prevalencebound}, but $L_G$ seems to perform worse than $U_D$. 

\begin{figure}[ht]
    \centering
    \includegraphics[width=0.5\textwidth]{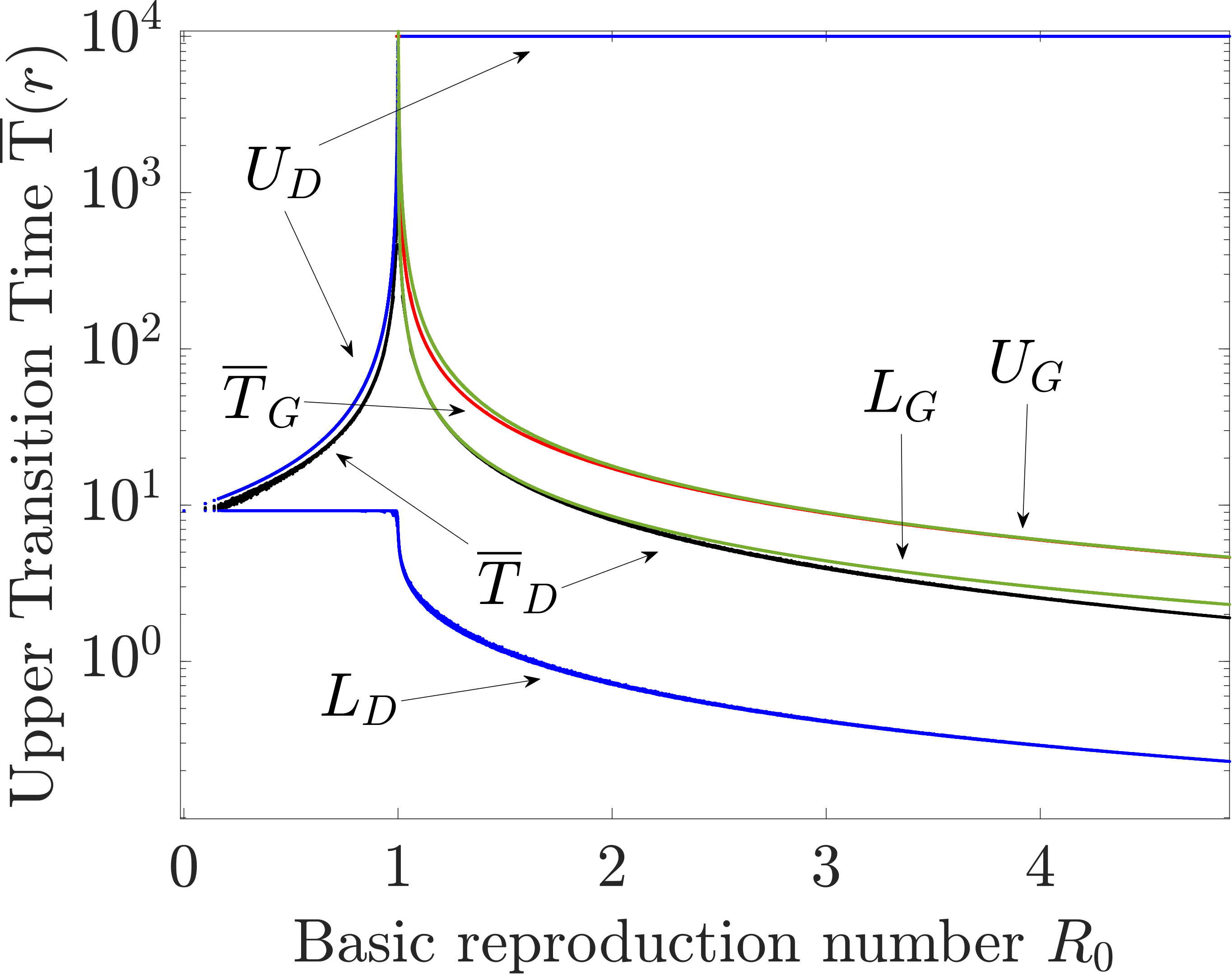}
    \caption{Comparison of the analytically derived bounds and numerically determined upper-transition time $\mathrm{\overline{T}}(r)$, with $r=10^{-4}$. The upper-transition time $\mathrm{\overline{T}}(r)$ is shown for $y(0) = 1$  in black with symbol $\overline{T}_D$ for decay and for $y(0) = r$ in red with symbol $\overline{T}_G$ for growth. The blue lines are the bounds on decay (black) and the green lines are the bounds on growth (red). The lower bounds are indicated with the symbol $L_x$ and the upper bounds are indicated with the symbol $U_x$, where $x$ is $D$ for decay or $G$ for growth. The upper bound $U_D$ is given by equation \eqref{eq:decayupperbound2} for $R_0 < \mathcal{R}$ and \eqref{eq:bartr0leq1} for $R_0 \geq \mathcal{R}$. Here, the intersection point $\mathcal{R}$ is given by \eqref{eq:crossingpointbounds}. The lower bound $L_D$ is given by \eqref{eq:lowerbounddecay} and the lower bound $L_G$ is given by \eqref{eq:lowerboundgrowth}. The upper bound $U_G$ is given by \eqref{eq:lemmagrowthequation}.}
    \label{fig:tbarcompare}
\end{figure}

Lastly, we return to the temporal process and repeat the simulations from Fig. \ref{fig:predictions}. However, we choose the inter-update time $\Delta t$ to be equal to the upper bound for $\mathrm{\overline{T}}(r)$ from \eqref{eq:combinedtbar} with $r=10^{-4}$. We do not consider the upper bound \eqref{eq:bartr0leq1} for $R_0 > 1$, because Fig. \ref{fig:tbarcompare} suggests that \eqref{eq:lemmagrowthequation} is a general upper bound. For the lower bound, we can only use the loose bound \eqref{eq:lowerbounddecay}, because \eqref{eq:lowerbounddecay} is smaller than \eqref{eq:lowerboundgrowth} everywhere. The upper figure in Fig. \ref{fig:predictionsbounds} shows that, at the upper bound, the absolute error is many times smaller than $r$. The lower figure in Fig. \ref{fig:predictionsbounds} shows that, at the lower bound, the absolute error is many times larger than $r$. At it's lowest point the error is $2r$.
\begin{figure}
  \centering
  \begin{subfigure}[]{0.47\textwidth}
  \centering
      \includegraphics[width=\textwidth]{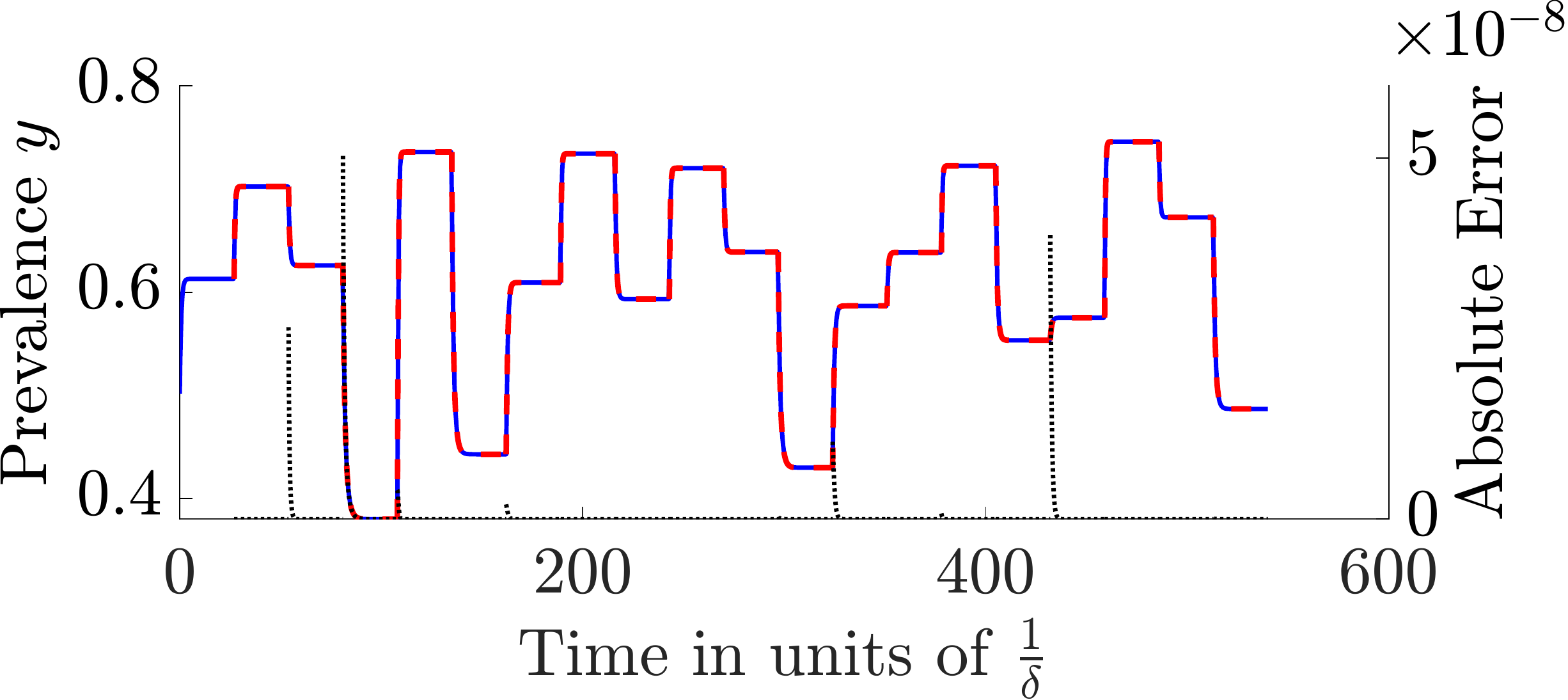}
  \end{subfigure}
  \begin{subfigure}[]{0.47\textwidth}
  \centering
      \includegraphics[width=\textwidth]{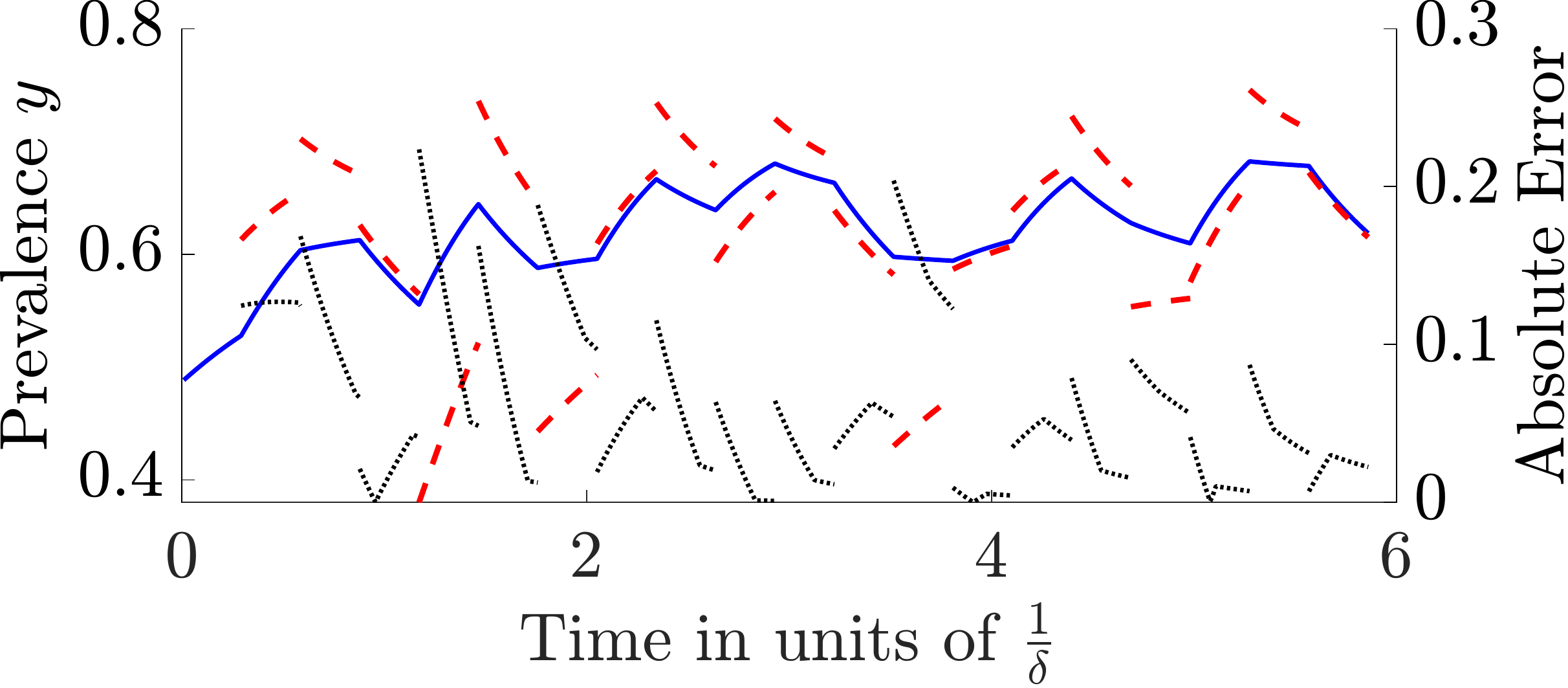}
  \end{subfigure}
  \caption{On the left $y$-axis the prevalence of a NIMFA process on a time-varying network (solid blue) is compared with the prevalence of the ``quenched prediction'' of each interval $[t_{m-1},t_m)$ starting from $y_{\infty}(G_{m-1})$ (dashed red). The inter-update times $\Delta t$ equal the (modified) upper bound \eqref{eq:combinedtbar} in the top figure and the lower bound \eqref{eq:lowerbounddecay} in the bottom figure. On the right $y$-axis the absolute error (dotted black) between the prediction and the process is shown. Each graph $G_m$ is an ER-graph with link density $p\in[0.3,0.8]$ and the graphs $G_m$ are the same between subfigures.}
\label{fig:predictionsbounds}
\end{figure}

\section{Conclusions}\label{sec:summary}
The interplay of the topological process and the epidemic process is a substantial challenge in the analysis of general epidemics on time-variant networks. Timescale separation often restricts studies to simpler scenarios, where the network is assumed static or approximated by the average of the time-variant networks over a finite interval. However, in real-world epidemics, the network changes at a speed comparable to the epidemic process. Hence, a thorough understanding of the intermediate regime, where the timescales cannot be separated, is vital for a realistic and reliable modelling approach. 

In this work, a first step towards the analysis of the intermediate regime is presented. We define the upper-transition time $\mathrm{\overline{T}}(r)$, a threshold quantity which characterizes the border of the intermediate regime and the \emph{quenched} regime, in which the network is approximately static. In an analysis of an SIS epidemic, this threshold quantity $\mathrm{\overline{T}}(r)$ can determine whether a network can be assumed to be static. Indeed, when the inter-update time $\Delta t$ is larger than $\mathrm{\overline{T}}(r)$, the epidemic process can, in most situations, be accurately predicted using the quenched approximation. We show that for fixed infection rate $\beta$, curing rate $\delta$ and initial state vector $V(0)$, but with different graphs, the basic reproduction number $R_0$ determines the upper-transition time $\mathrm{\overline{T}}(r)$. We derive upper and lower bounds for the upper-transition time $\mathrm{\overline{T}}(r)$ in \eqref{eq:generalupperboundtbar}, \eqref{eq:lowerboundgrowth} and \eqref{eq:lowerbounddecay}, and compare them in Fig. \ref{fig:tbarcompare} to numerical estimations of $\mathrm{\overline{T}}(r)$. We introduce the derivative convergence time $t^{\ast}(r^{\ast})$ in Appendix \ref{app:derivativeconvegencetime}, to upper bound the upper-transition time $\mathrm{\overline{T}}(r)$ and we argue that $t^{\ast}(r^{\ast})$ is easier to determine numerically, although the computation time saved varies depending on how one determines the steady-state prevalence $y_{\infty}$.
Additionally, we show that the upper-transition time $\mathrm{\overline{T}}(r)$ is large when networks around the epidemic threshold are present in the temporal process.

Some real-world epidemics, e.g. influenza, are characterized by $R_0$ slightly above the epidemic threshold \cite{biggerstaff2014estimates}. For similar diseases around the epidemic threshold, our work shows that the upper-transition time $\mathrm{\overline{T}}(r)$ is large in units of the average curing rate $\frac{1}{\delta}$. Therefore, these real-world epidemics near the threshold are in the intermediate regime; hence, we expect that the network topology updates play an active role in the disease spread. Additionally, the limits of the quenched approximation and predictions of the process, even if $\Delta{t} \geq \mathrm{\overline{T}}(r)$, as explained in Section \ref{sec:timeregimesoftemporalnetworkepidemics}, show that even in the approximate quenched regime, temporal effects are not negligible in general.

We leave various open paths for future work: 1) investigating heterogeneous NIMFA SIS; 2) considering inter-update times $T_m$ that are random variables with arbitrary distributions $F_{T_m}(x) = \Pr[T_m \leq x]$ instead of fixed $\Delta{t}$ as here; 3) considering a Markovian time-variant SIS process besides NIMFA; 4) since the lower transition time $\mathrm{\underline{T}}(r)$ is not as easily defined as the upper-transition time, an analysis of the lower boundary of the intermediate regime is also a promising topic for further analysis; 5) lastly, while our bounds \eqref{eq:generalupperboundtbar}, \eqref{eq:lowerboundgrowth} and \eqref{eq:lowerbounddecay} are general, our simulations of $\mathrm{\overline{T}}(r)$ were limited to ER-graphs only. We show in Appendix \ref{app:assumptionofERgraphs} that different networks do not behave significantly different for $N=50$. However, a thorough verification of our results on different classes of networks might be of interest. \\

\begin{acknowledgments}
This research has been funded by the European Research Council (ERC) under the European Union’s Horizon 2020 research and innovation programme (grant agreement
No 101019718). MS was supported by the Italian Ministry for University and Research (MUR) through the PRIN 2020 project ``Integrated Mathematical Approaches to Socio-Epidemiological Dynamics'' (No. 2020JLWP23). \\
{\bf Disclaimer:} The views and opinions expressed herein are the authors’ own and do not necessarily state or reflect those of ECDC. ECDC is not responsible for the data and information collation and analysis and cannot be held liable for conclusions or opinions drawn.
\end{acknowledgments}

\appendix
\section{Assumption of ER-graphs}\label{app:assumptionofERgraphs}
Fig. \ref{fig:compareERBAWS} illustrates that for $N=50$ the ER-graphs are representative of general graphs. We plot the upper-transition time $\mathrm{\overline{T}}(r)$ for each graph type (Barab\'{a}si-Albert (BA), Erd\H{o}s-R\'enyi (ER) and Watts-Strogatz (WS)). We draw 3000 realisations of each of the three random graph models, and we assume uniformly distributed parameters for each random graph model, except for the number of nodes $N$, which we consider fixed. \footnote{We denote Unif$(a,b)$ for the continuous uniform distribution on the interval $[a,b]$ and Unif$\{a,b\}$ for the discrete uniform distribution on the set $\{a, a+ 1,\dots, b-1, b\}$}. ER-graphs have one parameter: the link density $p\sim$ Unif$[0,1]$. BA-graphs have two parameters: the size of the starting clique $m_0\sim$ Unif$\{1, N\}$ and the degree of nodes when attached $m\sim$ Unif$\{1,m_0\}$. WS-graphs have two parameters: the average degree divided by two $K\sim$ Unif$\{1, \lfloor\frac{N-1}{2}\rfloor\}$ and the rewiring probability $\beta_{WS}\sim$ Unif$[0,1]$. While Fig. \ref{fig:compareERBAWS} shows that the three different graph types do result in a different transition times $\mathrm{\overline{T}}(r)$, the absolute difference of the transition time seems very small and almost negligible. It is plausible that these differences will become larger when $N$ increases. The inset of Fig. \ref{fig:compareERBAWS}, which is the same as the main figure but with logarithmic axes, illustrates that, while the absolute differences are negligible, there is already a significant relative difference between ER/WS and BA graphs.
\begin{figure*}[ht]
    \centering
    \includegraphics[width=\textwidth]{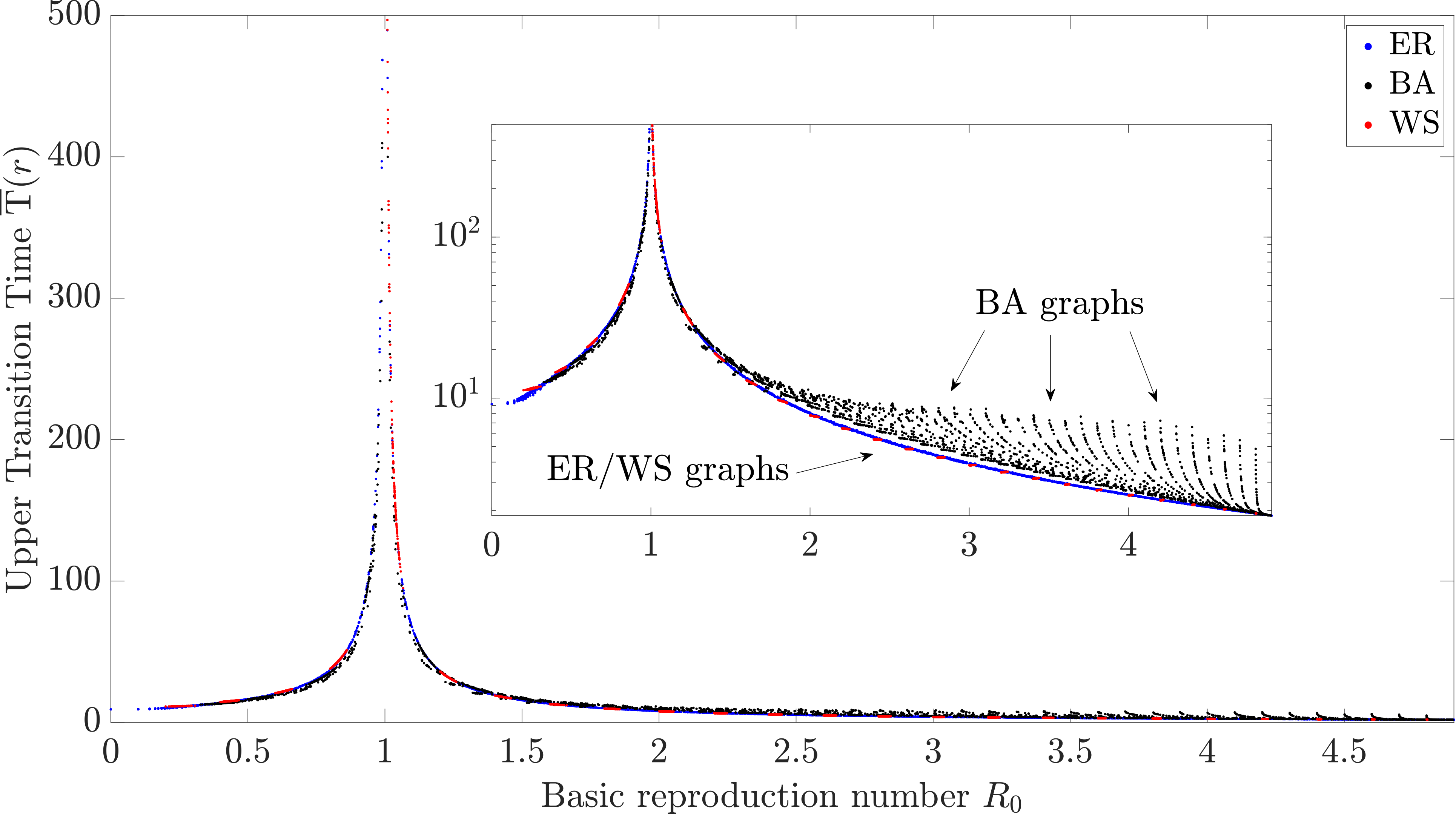}
    \caption{The upper-transition time $\mathrm{\overline{T}}(r)$ versus basic reproduction number $R_0$ for three different random graph types. Shown is the decay process starting in the all-infected initial state vector $V(0) = u$, for $N=50$ and $\beta = 0.1$. In blue, 3000 Erd\H{o}s-R\'enyi (ER) graphs with uniformly distributed parameters ($G_p(N)$ with $p\sim$ Unif$(0,1)$). In black, 3000 Barab\'{a}si-Albert (BA) graphs with uniformly distributed parameters ($m_0\sim$ Unif$\{1,N\}$ and $m\sim$ Unif$\{1,m_0\}$). In red, 3000 Watts-Strogatz (WS) graphs with uniformly distributed parameters ($K\sim$ Unif$\{1,\lfloor \frac{N-1}{2} \rfloor\}$ and $\beta_{WS}\sim$ Unif$(0,1)$). In the inset, the same figure with log-linear axes is shown. In log scale, for $R_0 > 2$, there is a significant difference from BA graphs to ER/WS graphs, which are on top of each other and below the BA graphs.}
    \label{fig:compareERBAWS}
\end{figure*}

\section{Proof of Lemma \ref{lemma:arbitrarilyslowfromzero}}\label{app:lemmaproof}
\begin{proof}
    We consider the starting infection probability vector $V(0) = \varepsilon(r) u$. We will upper bound the prevalence $y(t)$ and then substitute the starting condition and show that $|y(\mathcal{T})-y_{\infty}| > r$. The derivative $\df{y(t)}{t}$ of the prevalence $y(t)$ in \eqref{eq:prevalance} follows from \eqref{NIMFASISdelta1} as:
    $$
    \df{y(t)}{t} = - y(t) + \frac{\tau}{N} \sum_{i=1}^{N}(1-v_i(t))\sum^{N}_{j=1}a_{ij}v_j(t).
    $$
    Since $(1-v_i(t)) \leq 1$ and $a_{ij} \leq 1$, we find that
    $$
    \df{y(t)}{t} < - y(t) + \frac{\tau}{N} \sum_{i=1}^{N}\sum^{N}_{j=1}v_j(t) = -y(t) + \tau N y(t),
    $$
    where we have a strict inequality, because we do not consider self-loops $(a_{ii}=0)$, which is a reasonable assumption for individual-based contact graphs.
    Using Gr\"onwall's Lemma \cite{gronwall1919note} on the differential inequality $\df{y(t)}{t} < (\tau N -1) y(t)$ gives us an upper bound on the prevalence $y(t)$:
   \begin{equation}\label{eq:upperboundprevalence}
    y(t) < y(0)e^{(\tau N -1) t}.   
   \end{equation}
    The starting infection probability vector $V(0) = \varepsilon(r) u$ implies that the starting prevalence $y(0) = \varepsilon(r)$. We set $\varepsilon(r) = \frac{y_{\infty}-r}{e^{(\tau N -1)\mathcal{T}}}$, such that $y(0) = \varepsilon(r) < y_{\infty}$.  The difference $|y(t) - y_{\infty}|$ is: 
    $$
    |y(t) - y_{\infty}| = y_{\infty} - y(t) > y_{\infty} - (y_{\infty} - r)\frac{e^{(\tau N -1) t}}{e^{(\tau N -1) \mathcal{T}}}.
    $$
    Since $\frac{e^{(\tau N -1) t}}{e^{(\tau N -1) \mathcal{T}}} \leq 1$ for times $t \leq \mathcal{T}$, we find $|y(t) - y_{\infty}| > y_{\infty} - y_{\infty} + r = r$, which proves Lemma \ref{lemma:arbitrarilyslowfromzero}.
\end{proof}

\section{Derivative Convergence time}\label{app:derivativeconvegencetime}

As an heuristic for the upper-transition time $\mathrm{\overline{T}}(r)$, we consider the derivative convergence time $t^{\ast}$, which does not depend on $y_{\infty}$. The derivative convergence time is defined as the time $t\geq0$ when a NIMFA SIS process first obeys the inequality $|v_{i}(t+h) - v_{i}(t)| \leq hr^{\ast}$ for all nodes $i$ in the graph $G$:
\begin{equation}\label{eq:deftstar}
t^{\ast} = \min_{t \geq 0} \left\{|v_{i}(t+h) - v_{i}(t)| \leq hr^{\ast} \; ; \; \forall i=1,\dots,N \right\},
\end{equation} 
where $h$ is a step-size parameter and $r^{\ast}$ is an accuracy tolerance. We denote this accuracy tolerance as $r^{\ast}$ instead of $r$, because the derivative convergence time $t^{\ast}(r^{\ast})$ does not scale in the same way with the accuracy tolerance $r^{\ast}$ as the upper-transition time $\mathrm{\overline{T}}(r)$ with the accuracy tolerance $r$.

The derivative convergence time $t^{\ast}$ does not depend on the steady-state prevalence $y_{\infty}$ and therefore does not require information about future times. Therefore, given that $r^{\ast}$ is chosen in such a way that it corresponds to $r$, the derivative convergence time $t^{\ast}(r^{\ast})$ can be determined faster than the upper-transition time $\mathrm{\overline{T}}(r)$ as the calculation of $y_{\infty}$ can be skipped. Additionally, the convergence check is computationally cheaper.

Fig. \ref{fig:tstarzoomed} shows the values of the derivative convergence time $t^{\ast}$ for different values of the basic reproduction number $R_0$ and the starting prevalence $y(0)$. Specifically, we set the initial state vector to $V(0) = y(0)u$ and varied the initial prevalence $y(0)$. The inset in Fig. \ref{fig:tstarzoomed} shows sharp dips below the curves due to the starting value $y(0)$ being close to $y_\infty$ for that specific value of $R_0$, resulting in fast convergence. When the prevalence $y(0)$ is small, the initial changes in the prevalence are also small, resulting in the process instantaneously reaching the stopping criterion $|v_i(t+h) - v_i(t)| \leq hr^{\ast}$. Fig. \ref{fig:tstarzoomed} 
illustrates a trend similar to the one shown in Fig. \ref{fig:tbarzoomed}. Indeed, both the upper-transition time $\mathrm{\overline{T}}(r)$ and the derivative convergence time $t^{\ast}(r^{\ast})$ show the same qualitative asymptotic behaviour around $R_0 = 1$ and have the same dips when the starting prevalence $y(0)$ is close to the steady-state prevalence $y_{\infty}$. Similarly to the upper-transition time $\mathrm{\overline{T}}(r)$, the derivative convergence time $t^{\ast}$ is almost fully determined for each graph by the basic reproduction number $R_0$. The differences are clearer in Fig. \ref{fig:tbartstarcompare}, in which the log-scale shows that the derivative convergence time $t^{\ast}(r^{\ast})$ (thin red lines) has larger tails than the upper-transition time $\mathrm{\overline{T}}(r)$ (thick blue lines). 
\begin{figure*}[ht]
    \centering
    \includegraphics[width=\textwidth]{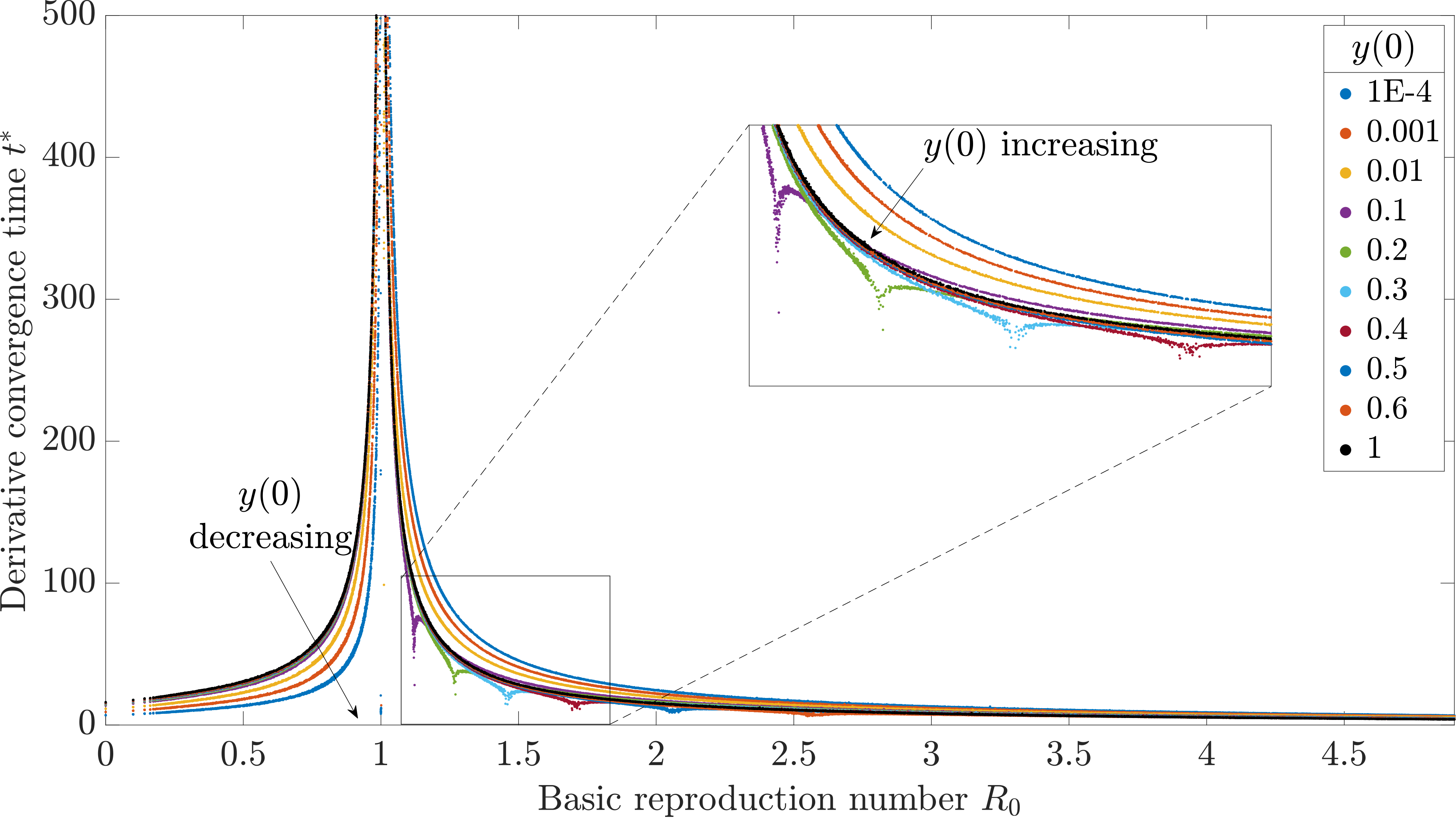}
    \caption{The derivative convergence time $t^{\ast}$ versus the basic reproduction number $R_0$ for different values of the starting prevalence $y(0)$. The chosen parameters are a graph size $N=50$, accuracy tolerance $r^{\ast}=10^{-7}$, infection rate $\beta = 0.1$ and step-size $h=0.01$. Each graph is a $ER(N,p)$ graph with $p\sim \text{Unif}(0,1)$. For $R_0 > 1$, the small values of $y(0)$ are on top. For $R_0<1$, the small values of $y(0)$ are at the bottom. The inset shows the dips below the curves in more detail.}
    \label{fig:tstarzoomed}
\end{figure*}

In order to upper bound the upper-transition time $\mathrm{\overline{T}}(r)$ with the derivative convergence time $t^{\ast}(r^{\ast})$, 
which could save computation time, it is of interest to know which value of $r^{\ast}$ is sufficient for some arbitrary value of $r$ and sequence of graphs $\mathcal{G}$. 

We compare the convergence criteria $|y(t+h) - y(t)| < hr^{\ast}$ and $|y(t)-y(t_{\text{max}})| < r$ for $t_{\text{max}} = 10^{4}$ and different values of $r$ and $r^{\ast}$. The numerical results in Fig. \ref{fig:tbartstarcompare} suggest that, when $r$ decreases by a factor of $10$, $r^{\ast}$ must decrease by a factor of $10^2$ to remain an upper bound for all values of $R_0$. In Fig. \ref{fig:tbartstarcompare}, the smallest $r^{\ast}$ for which the upper-transition time $\mathrm{\overline{T}}(r)$ is upper bounded by the derivative convergence time $t^{\ast}(r^{\ast})$ is $r^{\ast} = 10^{-2a}$ when $r=10^{-a}$, for some $a \in \mathbb{N}$. Indeed, we notice that the derivative convergence times $t^*(r^*)$ with $r^{\ast} = 10^{-2},10^{-4},10^{-6},10^{-8}$ bound the upper-transition times $\mathrm{\overline{T}}(r)$ with $r = 10^{-1},10^{-2},10^{-3},10^{-4}$ respectively.
\begin{figure*}
    \centering
    \includegraphics[width=\textwidth]{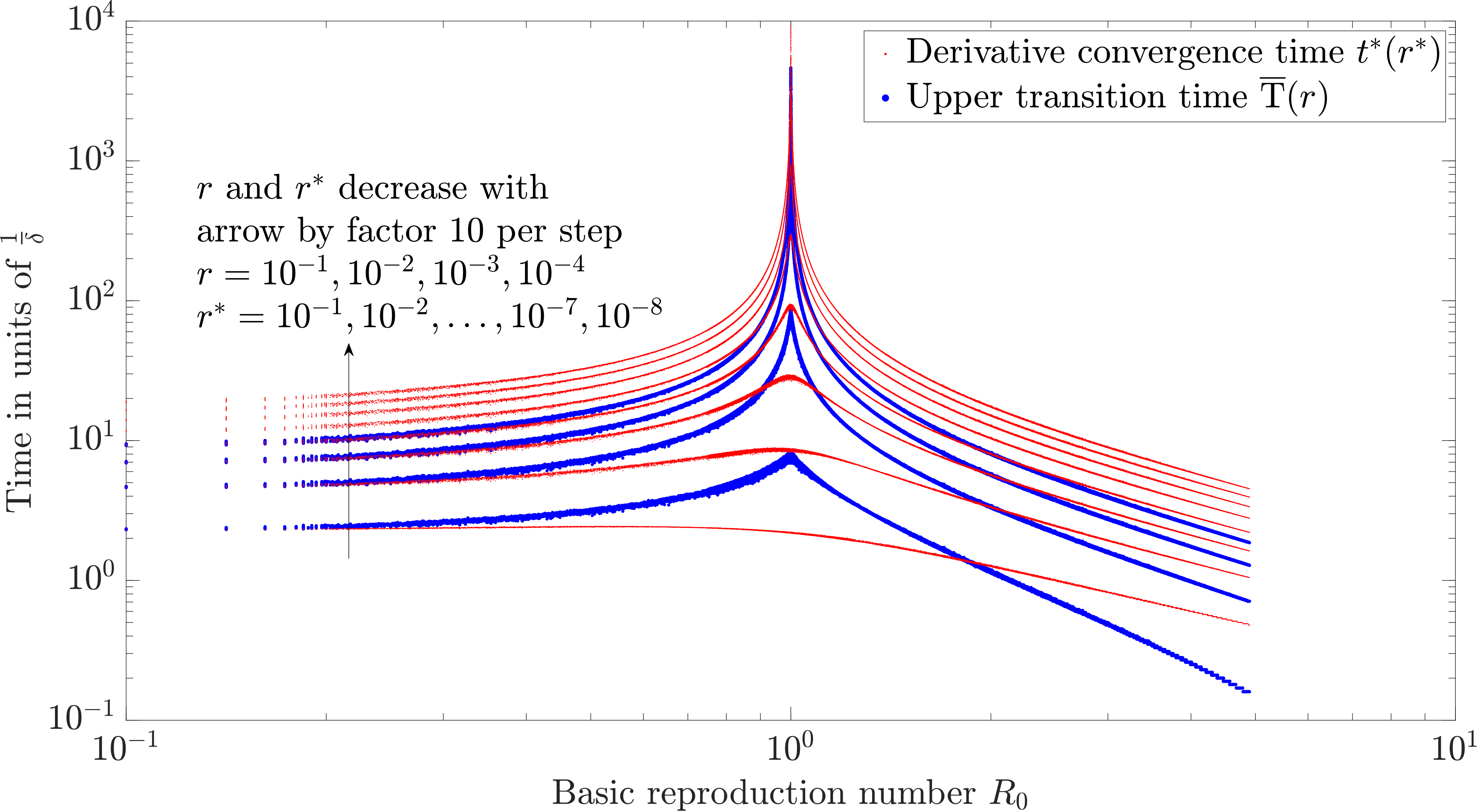}
    \caption{Comparison of the upper-transition time $\mathrm{\overline{T}}(r)$ (thick (dark) blue lines) and the derivative convergence time $t^{\ast}(r^{\ast})$ (thin (light) red lines) for different values of $r$ and $r^{\ast}$. The initial infection probability vector $V(0) = u$ and time-step parameter $h = 0.01$ are constant throughout all simulations. The values of the accuracy tolerance $r$ are $10^{-1},10^{-2},10^{-3},10^{-4}$ and the values of the accuracy tolerance $r^{\ast}$ are $10^{-1},10^{-2},\dots,10^{-7},10^{-8}$.}
    \label{fig:tbartstarcompare}
\end{figure*}

\section{Prevalence of the decay process from the all-infected state on regular graphs at the epidemic threshold.}\label{app:lemmafrac11plust}
In this section we state and proof the following lemma:
\begin{lemma}\label{lemma:frac11plust}
    For any $k$-regular graph $G$, the prevalence $y(t;G,\tau^{(1)}_c(G),u)$ of the NIMFA SIS process on $G$, with effective infection rate $\tau = \tau_c^{(1)}(G)$ with starting infection probability vector $V(0) = u$ satisfies 
    \begin{equation}\label{eq:frac11plust}
        y(t;G,\tau^{(1)}_c(G),u)=\frac{1}{1+t}.
    \end{equation}  
\end{lemma}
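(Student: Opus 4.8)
The plan is to exploit the symmetry of a $k$-regular graph to collapse the $N$-dimensional NIMFA system \eqref{NIMFASISdelta1} to a single scalar ODE that can be integrated in closed form. First I would record the spectral fact underlying the threshold: for a $k$-regular graph the all-one vector $u$ is an eigenvector of the adjacency matrix $A$ with eigenvalue $k$, and since every row sum of the non-negative matrix $A$ equals $k$, the spectral radius obeys $\lambda_1(A) = k$. Hence the epidemic threshold is $\tau^{(1)}_c(G) = 1/\lambda_1(A) = 1/k$.

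Next, starting from the uniform initial condition $V(0) = u$, I would argue that the solution stays uniform for all times, i.e. $v_i(t) = v(t)$ for every node $i$. The justification is \emph{uniqueness}: the NIMFA right-hand side is a polynomial, hence smooth, so by Picard--Lindel\"of the initial value problem has a unique solution; and $k$-regularity gives $\sum_{j=1}^N a_{ij} v_j(t) = k\,v(t)$ identically in $i$ whenever the state is uniform, so the uniform ansatz is consistent with the dynamics. Substituting $v_i(t) = v(t)$ into \eqref{NIMFASISdelta1} therefore reduces the full system to the single equation $\dot v = -v + \tau k (1-v) v$, and by uniqueness the trajectory of this scalar equation coincides with the genuine vector solution launched from $V(0) = u$.

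Then I would impose $\tau = \tau^{(1)}_c(G) = 1/k$, so that $\tau k = 1$ and the linear term cancels the first bilinear term, leaving the clean equation $\dot v = -v^2$. This separable ODE with $v(0) = 1$ integrates to $1/v(t) = 1 + t$, so $v(t) = \frac{1}{1+t}$. Finally, because all nodal probabilities coincide, the prevalence is $y(t) = \frac{1}{N}\sum_{i=1}^N v_i(t) = v(t) = \frac{1}{1+t}$, which is exactly \eqref{eq:frac11plust}.

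I do not anticipate a serious obstacle; the only step requiring care is the symmetry reduction, where one must invoke uniqueness of solutions---rather than merely exhibiting a uniform solution---to conclude that the uniform trajectory \emph{is} the solution of the system started at $V(0) = u$. Everything else is the elementary integration of $\dot v = -v^2$.
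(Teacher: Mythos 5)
Your proposal is correct and follows essentially the same route as the paper's own proof: exploit $k$-regularity to reduce the system \eqref{NIMFASISdelta1} to the scalar equation $\dot{v} = -v + \tau k (1-v)v$, set $\tau = 1/k$ to obtain $\dot{v} = -v^2$, and integrate with $v(0)=1$. Your explicit appeal to Picard--Lindel\"of uniqueness to justify that the uniform ansatz \emph{is} the solution (rather than merely a solution) is a welcome extra precision that the paper leaves implicit, but it does not change the argument.
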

\begin{proof}
     The graph $G$ is $k$-regular and symmetry in (\ref{NIMFASISdelta1}) implies that $v_i(t) = v_j(t) = y(t)$ for all nodes $i$ and $j$ when the starting infection probabilities $v_i(0) = v_j(0)$ for all nodes $i$ and $j$. We substitute $v_i(t) = y(t)$ and the degree of each node $d_i = \sum_{j=1}^{N} a_{ij} = k$ in (\ref{NIMFASISdelta1}) to obtain the following differential equation with initial condition $y(0) = 1$:
\begin{equation*}
    \df{y}{t} = -y + \tau k(1-y)y.
\end{equation*}
For an effective infection rate $\tau =\tau_c^{(1)}(G) = \frac{1}{k}$, we find:
\begin{equation}\label{eq:eengedeelddooreenplustding}
    \df{y}{t} = -y + (1-y)y =-y^2.
\end{equation}
The differential equation \eqref{eq:eengedeelddooreenplustding} has solution $y(t) = \frac{1}{\frac{1}{y(0)}+t}$ and substituting $y(0) =1$ proves \eqref{eq:frac11plust}.
\end{proof}

\section{Substantiation for Conjecture 1}\label{app:substantationconjecture}
In this section, we present a proof outline for Conjecture \ref{thm:decaybound1}, highlighting which steps are missing. We restrict ourselves to the case of $\tau = \tau_c^{(1)}(G)$, because this case is an upper bound for other $\tau < \tau_c^{(1)}(G)$. We denote the projection of the viral state vector $V(t)$ on the principal eigenvector $x_1$ of the adjacency matrix $A$ as
\begin{equation}\label{eq:projection}
    c(t) = x_1^TV(t).
\end{equation}
Then, the viral state vector $V(t)$ can be written as a linear combination of two vectors
\begin{equation}\label{eq:decompv}
    V(t) = c(t)x_1(t) + \xi(0),
\end{equation}
where the $N \times 1$ vector
\begin{equation}
    \xi(t) = V(t) -c(t)x_1(t)
\end{equation}
is orthogonal to the principal eigenvector, i.e., $x_1^T\xi(t) = 0$.
At the initial time $t=0$, using $V(0) = u$ the decomposition of the viral state vector $V(0)$ in \eqref{eq:decompv} becomes
\begin{equation}\label{eq:projectiontiszero}
    V(0) = c(0)x_1 + \xi(0) = u.
\end{equation}
We rewrite the definition \eqref{eq:prevalance} of the prevalence $y(t)$ as
\begin{equation*}
    y(t) = \frac{1}{N}u^TV(t),
\end{equation*}
which yields with \eqref{eq:decompv} and \eqref{eq:projectiontiszero} that
\begin{equation}
    y(t) = \frac{1}{N}(c(0)x_1+\xi(0))^T(c(t)x_1 + \xi(t)).
\end{equation}
Since $x_1^T\xi(t) = 0$ at every time $t$ and $x_1^Tx_1 = 1$, we obtain that
\begin{equation}\label{eq:prevalenceprojected}
    Ny(t) = c(0)c(t)+\xi(0)^T\xi(t).
\end{equation}
To prove that $y(t) \leq \frac{1}{1+t}$ is an upper bound of the prevalence $y(t)$, we apply the triangle inequality to \eqref{eq:prevalenceprojected}, which implies that the prevalence $y(t)$ of any graph obeys
\begin{equation*}
    Ny(t) \leq c(0)c(t) + |\xi(0)^T\xi(t)|,
\end{equation*}
at every time $t$. Here we used  $|c(0)c(t)| = c(0)c(t)$, because \eqref{eq:projection} and $v_i(t) \geq 0$ for every node $i$ at every time $t$ implies that $c(t) \geq 0$ at every time $t$. Furthermore, we apply the Cauchy-Schwarz inequality to obtain that
\begin{equation*}
    Ny(t) \leq c(0)c(t) + ||\xi(0)||_2||\xi(t)||_2.
\end{equation*}
To prove Conjecture \ref{thm:decaybound1}, it remains to show that
\begin{equation}\label{eq:requirementforconjecture}
    c(0)c(t) + ||\xi(0)||_2||\xi(t)||_2 \leq \frac{N}{1+t},
\end{equation}
at every time $t$. Numerical simulations suggest that this inequality holds. To further specify a proof direction we consider a regular graph. For a regular graph it holds that $x_1 = \frac{1}{\sqrt{N}}u$ and $\xi(t) = 0$ for all times $t$. Solving the NIMFA equations \eqref{NIMFASISdelta1} we find that
\begin{equation*}
    c(t) = \frac{\sqrt{N}}{1+t}.
\end{equation*}
Hence, since $c(0) = \sqrt{N}$ for a regular graph we obtain
\begin{equation*}
    c(t) \leq \frac{c(0)}{1+t}.
\end{equation*}
Now, if a graph is not a regular graph, then the eigenvector $x_1$ is not a multiple of the all-one vector $u$, which implies that $||\xi(0)||_2 > 0$. Intuitively, the less regular a graph is, the smaller $c(t)$ and the larger $||\xi(t)||_2$ should be. It is therefore reasonable to assume one could prove that
\begin{equation}\label{eq:requirementonc}
    c(t) \leq c(0) \frac{1}{1+t}.
\end{equation}
We have not been able to prove \eqref{eq:requirementonc}. However, as for \eqref{eq:requirementforconjecture}, numerical simulations suggest that the inequality holds. In Theorem 2 from \cite{prasse2020time}, a similar decomposition is used in which the error term $||\xi(t)||_2$ was bounded by a function of the form
\begin{equation*}
    ||\xi(t)||_2 \leq ||\xi(0)||_2 e^{-\sigma t},
\end{equation*}
at all times $t$, for some constant $\sigma > 0$. Now, suppose that one can show that the same inequality holds in this case and that $\sigma = 1$. Then, after using $e^{-\sigma t} \leq e^{-t} \leq \frac{1}{1+t}$ we would find
\begin{equation}\label{eq:requirementonxi}
||\xi(t)||_2 \leq ||\xi(0)||_2 \frac{1}{1+t},
\end{equation}
which has also been numerically verified in numerous simulations. After substituting \eqref{eq:requirementonxi} and \eqref{eq:requirementonc}, the left side of \eqref{eq:requirementforconjecture} becomes
\begin{eqnarray}
    c(0)c(t) + ||\xi(0)||_2||\xi(t)||_2  \leq c^2(0)\frac{1}{1+t} \nonumber\\ +||\xi(0)||_2^2\frac{1}{1+t}  = ||V(0)||_2^2\frac{1}{1+t} = \frac{\sqrt{N}}{1+t},  
\end{eqnarray}
where the terms can be combined due to the orthogonality of $x_1$ and $\xi(0)$ and the last equality follows from $V(0) = u$. Note that this would prove Conjecture \ref{thm:decaybound1} by showing \eqref{eq:requirementforconjecture}. We emphasise that Equations \eqref{eq:requirementforconjecture}, \eqref{eq:requirementonc} and \eqref{eq:requirementonxi} have been numerically verified. Combined with the results shown in Fig. \ref{fig:conjecturefigure}, we believe that there is strong numerical evidence for Conjecture \ref{thm:decaybound1}.

\bibliography{references}% Produces the bibliography via BibTeX.

%apsrev4-2.bst 2019-01-14 (MD) hand-edited version of apsrev4-1.bst
%Control: key (0)
%Control: author (8) initials jnrlst
%Control: editor formatted (1) identically to author
%Control: production of article title (0) allowed
%Control: page (0) single
%Control: year (1) truncated
%Control: production of eprint (0) enabled
\begin{thebibliography}{53}%
\makeatletter
\providecommand \@ifxundefined [1]{%
 \@ifx{#1\undefined}
}%
\providecommand \@ifnum [1]{%
 \ifnum #1\expandafter \@firstoftwo
 \else \expandafter \@secondoftwo
 \fi
}%
\providecommand \@ifx [1]{%
 \ifx #1\expandafter \@firstoftwo
 \else \expandafter \@secondoftwo
 \fi
}%
\providecommand \natexlab [1]{#1}%
\providecommand \enquote  [1]{``#1''}%
\providecommand \bibnamefont  [1]{#1}%
\providecommand \bibfnamefont [1]{#1}%
\providecommand \citenamefont [1]{#1}%
\providecommand \href@noop [0]{\@secondoftwo}%
\providecommand \href [0]{\begingroup \@sanitize@url \@href}%
\providecommand \@href[1]{\@@startlink{#1}\@@href}%
\providecommand \@@href[1]{\endgroup#1\@@endlink}%
\providecommand \@sanitize@url [0]{\catcode `\\12\catcode `\$12\catcode
  `\&12\catcode `\#12\catcode `\^12\catcode `\_12\catcode `\%12\relax}%
\providecommand \@@startlink[1]{}%
\providecommand \@@endlink[0]{}%
\providecommand \url  [0]{\begingroup\@sanitize@url \@url }%
\providecommand \@url [1]{\endgroup\@href {#1}{\urlprefix }}%
\providecommand \urlprefix  [0]{URL }%
\providecommand \Eprint [0]{\href }%
\providecommand \doibase [0]{https://doi.org/}%
\providecommand \selectlanguage [0]{\@gobble}%
\providecommand \bibinfo  [0]{\@secondoftwo}%
\providecommand \bibfield  [0]{\@secondoftwo}%
\providecommand \translation [1]{[#1]}%
\providecommand \BibitemOpen [0]{}%
\providecommand \bibitemStop [0]{}%
\providecommand \bibitemNoStop [0]{.\EOS\space}%
\providecommand \EOS [0]{\spacefactor3000\relax}%
\providecommand \BibitemShut  [1]{\csname bibitem#1\endcsname}%
\let\auto@bib@innerbib\@empty
%</preamble>
\bibitem [{\citenamefont {Anderson}\ and\ \citenamefont
  {May}(1991)}]{anderson1991infectious}%
  \BibitemOpen
  \bibfield  {author} {\bibinfo {author} {\bibfnamefont {R.~M.}\ \bibnamefont
  {Anderson}}\ and\ \bibinfo {author} {\bibfnamefont {R.~M.}\ \bibnamefont
  {May}},\ }\href@noop {} {\emph {\bibinfo {title} {Infectious diseases of
  humans: dynamics and control}}}\ (\bibinfo  {publisher} {Oxford university
  press},\ \bibinfo {year} {1991})\BibitemShut {NoStop}%
\bibitem [{\citenamefont {Pastor-Satorras}\ \emph {et~al.}(2015)\citenamefont
  {Pastor-Satorras}, \citenamefont {Castellano}, \citenamefont {Van~Mieghem},\
  and\ \citenamefont {Vespignani}}]{pastor2015epidemicreview}%
  \BibitemOpen
  \bibfield  {author} {\bibinfo {author} {\bibfnamefont {R.}~\bibnamefont
  {Pastor-Satorras}}, \bibinfo {author} {\bibfnamefont {C.}~\bibnamefont
  {Castellano}}, \bibinfo {author} {\bibfnamefont {P.}~\bibnamefont
  {Van~Mieghem}},\ and\ \bibinfo {author} {\bibfnamefont {A.}~\bibnamefont
  {Vespignani}},\ }\bibfield  {title} {\bibinfo {title} {Epidemic processes in
  complex networks},\ }\href@noop {} {\bibfield  {journal} {\bibinfo  {journal}
  {Reviews of modern physics}\ }\textbf {\bibinfo {volume} {87}},\ \bibinfo
  {pages} {925} (\bibinfo {year} {2015})}\BibitemShut {NoStop}%
\bibitem [{\citenamefont {Nowzari}\ \emph {et~al.}(2016)\citenamefont
  {Nowzari}, \citenamefont {Preciado},\ and\ \citenamefont
  {Pappas}}]{nowzari2016analysis}%
  \BibitemOpen
  \bibfield  {author} {\bibinfo {author} {\bibfnamefont {C.}~\bibnamefont
  {Nowzari}}, \bibinfo {author} {\bibfnamefont {V.~M.}\ \bibnamefont
  {Preciado}},\ and\ \bibinfo {author} {\bibfnamefont {G.~J.}\ \bibnamefont
  {Pappas}},\ }\bibfield  {title} {\bibinfo {title} {Analysis and control of
  epidemics: {A} survey of spreading processes on complex networks},\
  }\href@noop {} {\bibfield  {journal} {\bibinfo  {journal} {IEEE Control
  Systems Magazine}\ }\textbf {\bibinfo {volume} {36}},\ \bibinfo {pages} {26}
  (\bibinfo {year} {2016})}\BibitemShut {NoStop}%
\bibitem [{\citenamefont {Mei}\ \emph {et~al.}(2017)\citenamefont {Mei},
  \citenamefont {Mohagheghi}, \citenamefont {Zampieri},\ and\ \citenamefont
  {Bullo}}]{mei2017dynamics}%
  \BibitemOpen
  \bibfield  {author} {\bibinfo {author} {\bibfnamefont {W.}~\bibnamefont
  {Mei}}, \bibinfo {author} {\bibfnamefont {S.}~\bibnamefont {Mohagheghi}},
  \bibinfo {author} {\bibfnamefont {S.}~\bibnamefont {Zampieri}},\ and\
  \bibinfo {author} {\bibfnamefont {F.}~\bibnamefont {Bullo}},\ }\bibfield
  {title} {\bibinfo {title} {On the dynamics of deterministic epidemic
  propagation over networks},\ }\href@noop {} {\bibfield  {journal} {\bibinfo
  {journal} {Annual Reviews in Control}\ }\textbf {\bibinfo {volume} {44}},\
  \bibinfo {pages} {116} (\bibinfo {year} {2017})}\BibitemShut {NoStop}%
\bibitem [{\citenamefont {Holme}\ and\ \citenamefont
  {Saram{\"a}ki}(2012)}]{holme2012temporal}%
  \BibitemOpen
  \bibfield  {author} {\bibinfo {author} {\bibfnamefont {P.}~\bibnamefont
  {Holme}}\ and\ \bibinfo {author} {\bibfnamefont {J.}~\bibnamefont
  {Saram{\"a}ki}},\ }\bibfield  {title} {\bibinfo {title} {Temporal networks},\
  }\href@noop {} {\bibfield  {journal} {\bibinfo  {journal} {Physics reports}\
  }\textbf {\bibinfo {volume} {519}},\ \bibinfo {pages} {97} (\bibinfo {year}
  {2012})}\BibitemShut {NoStop}%
\bibitem [{\citenamefont {Holme}(2015)}]{holme2015modern}%
  \BibitemOpen
  \bibfield  {author} {\bibinfo {author} {\bibfnamefont {P.}~\bibnamefont
  {Holme}},\ }\bibfield  {title} {\bibinfo {title} {Modern temporal network
  theory: a colloquium},\ }\href@noop {} {\bibfield  {journal} {\bibinfo
  {journal} {The European Physical Journal B}\ }\textbf {\bibinfo {volume}
  {88}},\ \bibinfo {pages} {1} (\bibinfo {year} {2015})}\BibitemShut {NoStop}%
\bibitem [{\citenamefont {Kohar}\ and\ \citenamefont
  {Sinha}(2013)}]{kohar2013emergence}%
  \BibitemOpen
  \bibfield  {author} {\bibinfo {author} {\bibfnamefont {V.}~\bibnamefont
  {Kohar}}\ and\ \bibinfo {author} {\bibfnamefont {S.}~\bibnamefont {Sinha}},\
  }\bibfield  {title} {\bibinfo {title} {Emergence of epidemics in rapidly
  varying networks},\ }\href@noop {} {\bibfield  {journal} {\bibinfo  {journal}
  {Chaos, Solitons \& Fractals}\ }\textbf {\bibinfo {volume} {54}},\ \bibinfo
  {pages} {127} (\bibinfo {year} {2013})}\BibitemShut {NoStop}%
\bibitem [{\citenamefont {Valdano}\ \emph {et~al.}(2018)\citenamefont
  {Valdano}, \citenamefont {Fiorentin}, \citenamefont {Poletto},\ and\
  \citenamefont {Colizza}}]{valdano2018epidemic}%
  \BibitemOpen
  \bibfield  {author} {\bibinfo {author} {\bibfnamefont {E.}~\bibnamefont
  {Valdano}}, \bibinfo {author} {\bibfnamefont {M.~R.}\ \bibnamefont
  {Fiorentin}}, \bibinfo {author} {\bibfnamefont {C.}~\bibnamefont {Poletto}},\
  and\ \bibinfo {author} {\bibfnamefont {V.}~\bibnamefont {Colizza}},\
  }\bibfield  {title} {\bibinfo {title} {Epidemic threshold in continuous-time
  evolving networks},\ }\href@noop {} {\bibfield  {journal} {\bibinfo
  {journal} {Physical review letters}\ }\textbf {\bibinfo {volume} {120}},\
  \bibinfo {pages} {068302} (\bibinfo {year} {2018})}\BibitemShut {NoStop}%
\bibitem [{\citenamefont {Zhang}\ \emph {et~al.}(2017)\citenamefont {Zhang},
  \citenamefont {Li},\ and\ \citenamefont {Vasilakos}}]{zhang2017spectral}%
  \BibitemOpen
  \bibfield  {author} {\bibinfo {author} {\bibfnamefont {Y.-Q.}\ \bibnamefont
  {Zhang}}, \bibinfo {author} {\bibfnamefont {X.}~\bibnamefont {Li}},\ and\
  \bibinfo {author} {\bibfnamefont {A.~V.}\ \bibnamefont {Vasilakos}},\
  }\bibfield  {title} {\bibinfo {title} {Spectral analysis of epidemic
  thresholds of temporal networks},\ }\href@noop {} {\bibfield  {journal}
  {\bibinfo  {journal} {IEEE transactions on cybernetics}\ }\textbf {\bibinfo
  {volume} {50}},\ \bibinfo {pages} {1965} (\bibinfo {year}
  {2017})}\BibitemShut {NoStop}%
\bibitem [{\citenamefont {Pastor-Satorras}\ and\ \citenamefont
  {Vespignani}(2001{\natexlab{a}})}]{pastor2001epidemic}%
  \BibitemOpen
  \bibfield  {author} {\bibinfo {author} {\bibfnamefont {R.}~\bibnamefont
  {Pastor-Satorras}}\ and\ \bibinfo {author} {\bibfnamefont {A.}~\bibnamefont
  {Vespignani}},\ }\bibfield  {title} {\bibinfo {title} {Epidemic spreading in
  scale-free networks},\ }\href@noop {} {\bibfield  {journal} {\bibinfo
  {journal} {Physical review letters}\ }\textbf {\bibinfo {volume} {86}},\
  \bibinfo {pages} {3200} (\bibinfo {year} {2001}{\natexlab{a}})}\BibitemShut
  {NoStop}%
\bibitem [{\citenamefont {Pastor-Satorras}\ and\ \citenamefont
  {Vespignani}(2001{\natexlab{b}})}]{pastor2001epidemic2}%
  \BibitemOpen
  \bibfield  {author} {\bibinfo {author} {\bibfnamefont {R.}~\bibnamefont
  {Pastor-Satorras}}\ and\ \bibinfo {author} {\bibfnamefont {A.}~\bibnamefont
  {Vespignani}},\ }\bibfield  {title} {\bibinfo {title} {Epidemic dynamics and
  endemic states in complex networks},\ }\href@noop {} {\bibfield  {journal}
  {\bibinfo  {journal} {Physical Review E}\ }\textbf {\bibinfo {volume} {63}},\
  \bibinfo {pages} {066117} (\bibinfo {year} {2001}{\natexlab{b}})}\BibitemShut
  {NoStop}%
\bibitem [{\citenamefont {Schwarzkopf}\ \emph {et~al.}(2010)\citenamefont
  {Schwarzkopf}, \citenamefont {R{\'a}kos},\ and\ \citenamefont
  {Mukamel}}]{schwarzkopf2010epidemic}%
  \BibitemOpen
  \bibfield  {author} {\bibinfo {author} {\bibfnamefont {Y.}~\bibnamefont
  {Schwarzkopf}}, \bibinfo {author} {\bibfnamefont {A.}~\bibnamefont
  {R{\'a}kos}},\ and\ \bibinfo {author} {\bibfnamefont {D.}~\bibnamefont
  {Mukamel}},\ }\bibfield  {title} {\bibinfo {title} {Epidemic spreading in
  evolving networks},\ }\href@noop {} {\bibfield  {journal} {\bibinfo
  {journal} {Physical Review E}\ }\textbf {\bibinfo {volume} {82}},\ \bibinfo
  {pages} {036112} (\bibinfo {year} {2010})}\BibitemShut {NoStop}%
\bibitem [{\citenamefont {Li}\ \emph {et~al.}(2012)\citenamefont {Li},
  \citenamefont {van~de Bovenkamp},\ and\ \citenamefont
  {Van~Mieghem}}]{li2012susceptible}%
  \BibitemOpen
  \bibfield  {author} {\bibinfo {author} {\bibfnamefont {C.}~\bibnamefont
  {Li}}, \bibinfo {author} {\bibfnamefont {R.}~\bibnamefont {van~de
  Bovenkamp}},\ and\ \bibinfo {author} {\bibfnamefont {P.}~\bibnamefont
  {Van~Mieghem}},\ }\bibfield  {title} {\bibinfo {title}
  {{Susceptible-infected-susceptible model: A comparison of N-intertwined and
  heterogeneous mean-field approximations}},\ }\href@noop {} {\bibfield
  {journal} {\bibinfo  {journal} {Physical Review E}\ }\textbf {\bibinfo
  {volume} {86}},\ \bibinfo {pages} {026116} (\bibinfo {year}
  {2012})}\BibitemShut {NoStop}%
\bibitem [{\citenamefont {Devriendt}\ and\ \citenamefont
  {Van~Mieghem}(2017)}]{devriendt2017unified}%
  \BibitemOpen
  \bibfield  {author} {\bibinfo {author} {\bibfnamefont {K.}~\bibnamefont
  {Devriendt}}\ and\ \bibinfo {author} {\bibfnamefont {P.}~\bibnamefont
  {Van~Mieghem}},\ }\bibfield  {title} {\bibinfo {title} {Unified mean-field
  framework for susceptible-infected-susceptible epidemics on networks, based
  on graph partitioning and the isoperimetric inequality},\ }\href@noop {}
  {\bibfield  {journal} {\bibinfo  {journal} {Physical Review E}\ }\textbf
  {\bibinfo {volume} {96}},\ \bibinfo {pages} {052314} (\bibinfo {year}
  {2017})}\BibitemShut {NoStop}%
\bibitem [{\citenamefont {Holme}\ and\ \citenamefont
  {Liljeros}(2014)}]{holme2014birth}%
  \BibitemOpen
  \bibfield  {author} {\bibinfo {author} {\bibfnamefont {P.}~\bibnamefont
  {Holme}}\ and\ \bibinfo {author} {\bibfnamefont {F.}~\bibnamefont
  {Liljeros}},\ }\bibfield  {title} {\bibinfo {title} {Birth and death of links
  control disease spreading in empirical contact networks},\ }\href@noop {}
  {\bibfield  {journal} {\bibinfo  {journal} {Scientific reports}\ }\textbf
  {\bibinfo {volume} {4}},\ \bibinfo {pages} {4999} (\bibinfo {year}
  {2014})}\BibitemShut {NoStop}%
\bibitem [{\citenamefont {Leitch}\ \emph {et~al.}(2019)\citenamefont {Leitch},
  \citenamefont {Alexander},\ and\ \citenamefont
  {Sengupta}}]{leitch2019toward}%
  \BibitemOpen
  \bibfield  {author} {\bibinfo {author} {\bibfnamefont {J.}~\bibnamefont
  {Leitch}}, \bibinfo {author} {\bibfnamefont {K.~A.}\ \bibnamefont
  {Alexander}},\ and\ \bibinfo {author} {\bibfnamefont {S.}~\bibnamefont
  {Sengupta}},\ }\bibfield  {title} {\bibinfo {title} {Toward epidemic
  thresholds on temporal networks: a review and open questions},\ }\href@noop
  {} {\bibfield  {journal} {\bibinfo  {journal} {Applied Network Science}\
  }\textbf {\bibinfo {volume} {4}},\ \bibinfo {pages} {1} (\bibinfo {year}
  {2019})}\BibitemShut {NoStop}%
\bibitem [{\citenamefont {Perra}\ \emph {et~al.}(2012)\citenamefont {Perra},
  \citenamefont {Gon{\c{c}}alves}, \citenamefont {Pastor-Satorras},\ and\
  \citenamefont {Vespignani}}]{perra2012activity}%
  \BibitemOpen
  \bibfield  {author} {\bibinfo {author} {\bibfnamefont {N.}~\bibnamefont
  {Perra}}, \bibinfo {author} {\bibfnamefont {B.}~\bibnamefont
  {Gon{\c{c}}alves}}, \bibinfo {author} {\bibfnamefont {R.}~\bibnamefont
  {Pastor-Satorras}},\ and\ \bibinfo {author} {\bibfnamefont {A.}~\bibnamefont
  {Vespignani}},\ }\bibfield  {title} {\bibinfo {title} {Activity driven
  modeling of time varying networks},\ }\href@noop {} {\bibfield  {journal}
  {\bibinfo  {journal} {Scientific reports}\ }\textbf {\bibinfo {volume} {2}},\
  \bibinfo {pages} {469} (\bibinfo {year} {2012})}\BibitemShut {NoStop}%
\bibitem [{\citenamefont {Stehl{\'e}}\ \emph {et~al.}(2011)\citenamefont
  {Stehl{\'e}}, \citenamefont {Voirin}, \citenamefont {Barrat}, \citenamefont
  {Cattuto}, \citenamefont {Colizza}, \citenamefont {Isella}, \citenamefont
  {R{\'e}gis}, \citenamefont {Pinton}, \citenamefont {Khanafer}, \citenamefont
  {Van~den Broeck} \emph {et~al.}}]{stehle2011simulation}%
  \BibitemOpen
  \bibfield  {author} {\bibinfo {author} {\bibfnamefont {J.}~\bibnamefont
  {Stehl{\'e}}}, \bibinfo {author} {\bibfnamefont {N.}~\bibnamefont {Voirin}},
  \bibinfo {author} {\bibfnamefont {A.}~\bibnamefont {Barrat}}, \bibinfo
  {author} {\bibfnamefont {C.}~\bibnamefont {Cattuto}}, \bibinfo {author}
  {\bibfnamefont {V.}~\bibnamefont {Colizza}}, \bibinfo {author} {\bibfnamefont
  {L.}~\bibnamefont {Isella}}, \bibinfo {author} {\bibfnamefont
  {C.}~\bibnamefont {R{\'e}gis}}, \bibinfo {author} {\bibfnamefont {J.-F.}\
  \bibnamefont {Pinton}}, \bibinfo {author} {\bibfnamefont {N.}~\bibnamefont
  {Khanafer}}, \bibinfo {author} {\bibfnamefont {W.}~\bibnamefont {Van~den
  Broeck}}, \emph {et~al.},\ }\bibfield  {title} {\bibinfo {title} {{Simulation
  of an SEIR infectious disease model on the dynamic contact network of
  conference attendees}},\ }\href@noop {} {\bibfield  {journal} {\bibinfo
  {journal} {BMC medicine}\ }\textbf {\bibinfo {volume} {9}},\ \bibinfo {pages}
  {1} (\bibinfo {year} {2011})}\BibitemShut {NoStop}%
\bibitem [{\citenamefont {Par{\'e}}\ \emph {et~al.}(2015)\citenamefont
  {Par{\'e}}, \citenamefont {Beck},\ and\ \citenamefont
  {Nedi{\'c}}}]{pare2015stability}%
  \BibitemOpen
  \bibfield  {author} {\bibinfo {author} {\bibfnamefont {P.~E.}\ \bibnamefont
  {Par{\'e}}}, \bibinfo {author} {\bibfnamefont {C.~L.}\ \bibnamefont {Beck}},\
  and\ \bibinfo {author} {\bibfnamefont {A.}~\bibnamefont {Nedi{\'c}}},\
  }\bibfield  {title} {\bibinfo {title} {Stability analysis and control of
  virus spread over time-varying networks},\ }in\ \href@noop {} {\emph
  {\bibinfo {booktitle} {2015 54th IEEE Conference on Decision and Control
  (CDC)}}}\ (\bibinfo {organization} {IEEE},\ \bibinfo {year} {2015})\ pp.\
  \bibinfo {pages} {3554--3559}\BibitemShut {NoStop}%
\bibitem [{\citenamefont {Ogura}\ and\ \citenamefont
  {Preciado}(2016)}]{ogura2016stability}%
  \BibitemOpen
  \bibfield  {author} {\bibinfo {author} {\bibfnamefont {M.}~\bibnamefont
  {Ogura}}\ and\ \bibinfo {author} {\bibfnamefont {V.~M.}\ \bibnamefont
  {Preciado}},\ }\bibfield  {title} {\bibinfo {title} {Stability of spreading
  processes over time-varying large-scale networks},\ }\href@noop {} {\bibfield
   {journal} {\bibinfo  {journal} {IEEE Transactions on Network Science and
  Engineering}\ }\textbf {\bibinfo {volume} {3}},\ \bibinfo {pages} {44}
  (\bibinfo {year} {2016})}\BibitemShut {NoStop}%
\bibitem [{\citenamefont {Par{\'e}}\ \emph
  {et~al.}(2017{\natexlab{a}})\citenamefont {Par{\'e}}, \citenamefont {Liu},
  \citenamefont {Beck}, \citenamefont {Nedi{\'c}},\ and\ \citenamefont
  {Ba{\c{s}}ar}}]{pare2017multi}%
  \BibitemOpen
  \bibfield  {author} {\bibinfo {author} {\bibfnamefont {P.~E.}\ \bibnamefont
  {Par{\'e}}}, \bibinfo {author} {\bibfnamefont {J.}~\bibnamefont {Liu}},
  \bibinfo {author} {\bibfnamefont {C.~L.}\ \bibnamefont {Beck}}, \bibinfo
  {author} {\bibfnamefont {A.}~\bibnamefont {Nedi{\'c}}},\ and\ \bibinfo
  {author} {\bibfnamefont {T.}~\bibnamefont {Ba{\c{s}}ar}},\ }\bibfield
  {title} {\bibinfo {title} {Multi-competitive viruses over static and
  time-varying networks},\ }in\ \href@noop {} {\emph {\bibinfo {booktitle}
  {2017 American Control Conference (ACC)}}}\ (\bibinfo {organization} {IEEE},\
  \bibinfo {year} {2017})\ pp.\ \bibinfo {pages} {1685--1690}\BibitemShut
  {NoStop}%
\bibitem [{\citenamefont {Par{\'e}}\ \emph
  {et~al.}(2017{\natexlab{b}})\citenamefont {Par{\'e}}, \citenamefont {Beck},\
  and\ \citenamefont {Nedi{\'c}}}]{pare2017epidemic}%
  \BibitemOpen
  \bibfield  {author} {\bibinfo {author} {\bibfnamefont {P.~E.}\ \bibnamefont
  {Par{\'e}}}, \bibinfo {author} {\bibfnamefont {C.~L.}\ \bibnamefont {Beck}},\
  and\ \bibinfo {author} {\bibfnamefont {A.}~\bibnamefont {Nedi{\'c}}},\
  }\bibfield  {title} {\bibinfo {title} {Epidemic processes over time-varying
  networks},\ }\href@noop {} {\bibfield  {journal} {\bibinfo  {journal} {IEEE
  Transactions on Control of Network Systems}\ }\textbf {\bibinfo {volume}
  {5}},\ \bibinfo {pages} {1322} (\bibinfo {year}
  {2017}{\natexlab{b}})}\BibitemShut {NoStop}%
\bibitem [{\citenamefont {Lieberman}\ \emph {et~al.}(2005)\citenamefont
  {Lieberman}, \citenamefont {Hauert},\ and\ \citenamefont
  {Nowak}}]{lieberman2005evolutionary}%
  \BibitemOpen
  \bibfield  {author} {\bibinfo {author} {\bibfnamefont {E.}~\bibnamefont
  {Lieberman}}, \bibinfo {author} {\bibfnamefont {C.}~\bibnamefont {Hauert}},\
  and\ \bibinfo {author} {\bibfnamefont {M.~A.}\ \bibnamefont {Nowak}},\
  }\bibfield  {title} {\bibinfo {title} {Evolutionary dynamics on graphs},\
  }\href@noop {} {\bibfield  {journal} {\bibinfo  {journal} {Nature}\ }\textbf
  {\bibinfo {volume} {433}},\ \bibinfo {pages} {312} (\bibinfo {year}
  {2005})}\BibitemShut {NoStop}%
\bibitem [{\citenamefont {Prakash}\ \emph {et~al.}(2010)\citenamefont
  {Prakash}, \citenamefont {Tong}, \citenamefont {Valler}, \citenamefont
  {Faloutsos},\ and\ \citenamefont {Faloutsos}}]{prakash2010virus}%
  \BibitemOpen
  \bibfield  {author} {\bibinfo {author} {\bibfnamefont {B.~A.}\ \bibnamefont
  {Prakash}}, \bibinfo {author} {\bibfnamefont {H.}~\bibnamefont {Tong}},
  \bibinfo {author} {\bibfnamefont {N.}~\bibnamefont {Valler}}, \bibinfo
  {author} {\bibfnamefont {M.}~\bibnamefont {Faloutsos}},\ and\ \bibinfo
  {author} {\bibfnamefont {C.}~\bibnamefont {Faloutsos}},\ }\bibfield  {title}
  {\bibinfo {title} {Virus propagation on time-varying networks: {T}heory and
  immunization algorithms},\ }in\ \href@noop {} {\emph {\bibinfo {booktitle}
  {Machine Learning and Knowledge Discovery in Databases: European Conference,
  ECML PKDD 2010, Barcelona, Spain, September 20-24, 2010, Proceedings, Part
  III 21}}}\ (\bibinfo {organization} {Springer},\ \bibinfo {year} {2010})\
  pp.\ \bibinfo {pages} {99--114}\BibitemShut {NoStop}%
\bibitem [{\citenamefont {Kotnis}\ and\ \citenamefont
  {Kuri}(2013)}]{kotnis2013stochastic}%
  \BibitemOpen
  \bibfield  {author} {\bibinfo {author} {\bibfnamefont {B.}~\bibnamefont
  {Kotnis}}\ and\ \bibinfo {author} {\bibfnamefont {J.}~\bibnamefont {Kuri}},\
  }\bibfield  {title} {\bibinfo {title} {Stochastic analysis of epidemics on
  adaptive time varying networks},\ }\href@noop {} {\bibfield  {journal}
  {\bibinfo  {journal} {Physical Review E}\ }\textbf {\bibinfo {volume} {87}},\
  \bibinfo {pages} {062810} (\bibinfo {year} {2013})}\BibitemShut {NoStop}%
\bibitem [{\citenamefont {Ren}\ and\ \citenamefont
  {Wang}(2014)}]{ren2014epidemic}%
  \BibitemOpen
  \bibfield  {author} {\bibinfo {author} {\bibfnamefont {G.}~\bibnamefont
  {Ren}}\ and\ \bibinfo {author} {\bibfnamefont {X.}~\bibnamefont {Wang}},\
  }\bibfield  {title} {\bibinfo {title} {Epidemic spreading in time-varying
  community networks},\ }\href@noop {} {\bibfield  {journal} {\bibinfo
  {journal} {Chaos: An Interdisciplinary Journal of Nonlinear Science}\
  }\textbf {\bibinfo {volume} {24}},\ \bibinfo {pages} {023116} (\bibinfo
  {year} {2014})}\BibitemShut {NoStop}%
\bibitem [{\citenamefont {Vestergaard}\ and\ \citenamefont
  {G{\'e}nois}(2015)}]{vestergaard2015temporal}%
  \BibitemOpen
  \bibfield  {author} {\bibinfo {author} {\bibfnamefont {C.~L.}\ \bibnamefont
  {Vestergaard}}\ and\ \bibinfo {author} {\bibfnamefont {M.}~\bibnamefont
  {G{\'e}nois}},\ }\bibfield  {title} {\bibinfo {title} {Temporal {G}illespie
  algorithm: fast simulation of contagion processes on time-varying networks},\
  }\href@noop {} {\bibfield  {journal} {\bibinfo  {journal} {PLoS computational
  biology}\ }\textbf {\bibinfo {volume} {11}},\ \bibinfo {pages} {e1004579}
  (\bibinfo {year} {2015})}\BibitemShut {NoStop}%
\bibitem [{\citenamefont {Nadini}\ \emph {et~al.}(2018)\citenamefont {Nadini},
  \citenamefont {Sun}, \citenamefont {Ubaldi}, \citenamefont {Starnini},
  \citenamefont {Rizzo},\ and\ \citenamefont {Perra}}]{nadini2018epidemic}%
  \BibitemOpen
  \bibfield  {author} {\bibinfo {author} {\bibfnamefont {M.}~\bibnamefont
  {Nadini}}, \bibinfo {author} {\bibfnamefont {K.}~\bibnamefont {Sun}},
  \bibinfo {author} {\bibfnamefont {E.}~\bibnamefont {Ubaldi}}, \bibinfo
  {author} {\bibfnamefont {M.}~\bibnamefont {Starnini}}, \bibinfo {author}
  {\bibfnamefont {A.}~\bibnamefont {Rizzo}},\ and\ \bibinfo {author}
  {\bibfnamefont {N.}~\bibnamefont {Perra}},\ }\bibfield  {title} {\bibinfo
  {title} {Epidemic spreading in modular time-varying networks},\ }\href@noop
  {} {\bibfield  {journal} {\bibinfo  {journal} {Scientific reports}\ }\textbf
  {\bibinfo {volume} {8}},\ \bibinfo {pages} {2352} (\bibinfo {year}
  {2018})}\BibitemShut {NoStop}%
\bibitem [{\citenamefont {Guo}\ \emph {et~al.}(2021)\citenamefont {Guo},
  \citenamefont {Yin}, \citenamefont {Xia},\ and\ \citenamefont
  {Dehmer}}]{guo2021impact}%
  \BibitemOpen
  \bibfield  {author} {\bibinfo {author} {\bibfnamefont {H.}~\bibnamefont
  {Guo}}, \bibinfo {author} {\bibfnamefont {Q.}~\bibnamefont {Yin}}, \bibinfo
  {author} {\bibfnamefont {C.}~\bibnamefont {Xia}},\ and\ \bibinfo {author}
  {\bibfnamefont {M.}~\bibnamefont {Dehmer}},\ }\bibfield  {title} {\bibinfo
  {title} {Impact of information diffusion on epidemic spreading in partially
  mapping two-layered time-varying networks},\ }\href@noop {} {\bibfield
  {journal} {\bibinfo  {journal} {Nonlinear Dynamics}\ }\textbf {\bibinfo
  {volume} {105}},\ \bibinfo {pages} {3819} (\bibinfo {year}
  {2021})}\BibitemShut {NoStop}%
\bibitem [{\citenamefont {Han}\ \emph {et~al.}(2023)\citenamefont {Han},
  \citenamefont {Lin}, \citenamefont {Tang}, \citenamefont {Liu},\ and\
  \citenamefont {Guan}}]{han2023impact}%
  \BibitemOpen
  \bibfield  {author} {\bibinfo {author} {\bibfnamefont {L.}~\bibnamefont
  {Han}}, \bibinfo {author} {\bibfnamefont {Z.}~\bibnamefont {Lin}}, \bibinfo
  {author} {\bibfnamefont {M.}~\bibnamefont {Tang}}, \bibinfo {author}
  {\bibfnamefont {Y.}~\bibnamefont {Liu}},\ and\ \bibinfo {author}
  {\bibfnamefont {S.}~\bibnamefont {Guan}},\ }\bibfield  {title} {\bibinfo
  {title} {Impact of human contact patterns on epidemic spreading in
  time-varying networks},\ }\href@noop {} {\bibfield  {journal} {\bibinfo
  {journal} {Physical Review E}\ }\textbf {\bibinfo {volume} {107}},\ \bibinfo
  {pages} {024312} (\bibinfo {year} {2023})}\BibitemShut {NoStop}%
\bibitem [{\citenamefont {Van~Mieghem}\ \emph {et~al.}(2009)\citenamefont
  {Van~Mieghem}, \citenamefont {Omic},\ and\ \citenamefont
  {Kooij}}]{VanMieghem2008Virusspreadinnetworks}%
  \BibitemOpen
  \bibfield  {author} {\bibinfo {author} {\bibfnamefont {P.}~\bibnamefont
  {Van~Mieghem}}, \bibinfo {author} {\bibfnamefont {J.}~\bibnamefont {Omic}},\
  and\ \bibinfo {author} {\bibfnamefont {R.}~\bibnamefont {Kooij}},\ }\bibfield
   {title} {\bibinfo {title} {Virus spread in networks},\ }\href@noop {}
  {\bibfield  {journal} {\bibinfo  {journal} {IEEE/ACM Transactions On
  Networking}\ }\textbf {\bibinfo {volume} {17}},\ \bibinfo {pages} {1}
  (\bibinfo {year} {2009})}\BibitemShut {NoStop}%
\bibitem [{\citenamefont {Van~Mieghem}(2023)}]{VanMieghem2010graphspectra}%
  \BibitemOpen
  \bibfield  {author} {\bibinfo {author} {\bibfnamefont {P.}~\bibnamefont
  {Van~Mieghem}},\ }\href@noop {} {\emph {\bibinfo {title} {Graph spectra for
  complex networks}}}\ (\bibinfo  {publisher} {Cambridge University Press},\
  \bibinfo {address} {Cambridge, U.K.},\ \bibinfo {year} {2023})\BibitemShut
  {NoStop}%
\bibitem [{\citenamefont {Ganesh}\ \emph {et~al.}(2005)\citenamefont {Ganesh},
  \citenamefont {Massouli{\'e}},\ and\ \citenamefont
  {Towsley}}]{ganesh2005effect}%
  \BibitemOpen
  \bibfield  {author} {\bibinfo {author} {\bibfnamefont {A.}~\bibnamefont
  {Ganesh}}, \bibinfo {author} {\bibfnamefont {L.}~\bibnamefont
  {Massouli{\'e}}},\ and\ \bibinfo {author} {\bibfnamefont {D.}~\bibnamefont
  {Towsley}},\ }\bibfield  {title} {\bibinfo {title} {The effect of network
  topology on the spread of epidemics},\ }in\ \href@noop {} {\emph {\bibinfo
  {booktitle} {Proceedings IEEE 24th Annual Joint Conference of the IEEE
  Computer and Communications Societies.}}},\ Vol.~\bibinfo {volume} {2}\
  (\bibinfo {organization} {IEEE},\ \bibinfo {year} {2005})\ pp.\ \bibinfo
  {pages} {1455--1466}\BibitemShut {NoStop}%
\bibitem [{Note1()}]{Note1}%
  \BibitemOpen
  \bibinfo {note} {Technically, there is a non-zero probability that the
  epidemic dies out fast for $\tau > \tau _c$ in the Markovian SIS model,
  because the probability of curing before infecting anyone is non-zero for any
  $\tau $.}\BibitemShut {Stop}%
\bibitem [{\citenamefont
  {Van~Mieghem}(2013)}]{VanMieghem2013decaytowardstheoverallhealthystate}%
  \BibitemOpen
  \bibfield  {author} {\bibinfo {author} {\bibfnamefont {P.}~\bibnamefont
  {Van~Mieghem}},\ }\bibfield  {title} {\bibinfo {title} {{Decay towards the
  overall-healthy state in SIS epidemics on networks}},\ }\href@noop {}
  {\bibfield  {journal} {\bibinfo  {journal} {arXiv preprint arXiv:1310.3980}\
  } (\bibinfo {year} {2013})}\BibitemShut {NoStop}%
\bibitem [{\citenamefont {Van~Mieghem}(2020)}]{vanmieghem2020explosive}%
  \BibitemOpen
  \bibfield  {author} {\bibinfo {author} {\bibfnamefont {P.}~\bibnamefont
  {Van~Mieghem}},\ }\bibfield  {title} {\bibinfo {title} {Explosive phase
  transition in susceptible-infected-susceptible epidemics with arbitrary small
  but nonzero self-infection rate},\ }\href@noop {} {\bibfield  {journal}
  {\bibinfo  {journal} {Physical Review E}\ }\textbf {\bibinfo {volume}
  {101}},\ \bibinfo {pages} {032303} (\bibinfo {year} {2020})}\BibitemShut
  {NoStop}%
\bibitem [{\citenamefont {Cator}\ and\ \citenamefont
  {Van~Mieghem}(2012)}]{cator2012second}%
  \BibitemOpen
  \bibfield  {author} {\bibinfo {author} {\bibfnamefont {E.}~\bibnamefont
  {Cator}}\ and\ \bibinfo {author} {\bibfnamefont {P.}~\bibnamefont
  {Van~Mieghem}},\ }\bibfield  {title} {\bibinfo {title} {Second-order
  mean-field susceptible-infected-susceptible epidemic threshold},\ }\href@noop
  {} {\bibfield  {journal} {\bibinfo  {journal} {Physical review E}\ }\textbf
  {\bibinfo {volume} {85}},\ \bibinfo {pages} {056111} (\bibinfo {year}
  {2012})}\BibitemShut {NoStop}%
\bibitem [{\citenamefont {Van~Mieghem}(2014)}]{van2014performance}%
  \BibitemOpen
  \bibfield  {author} {\bibinfo {author} {\bibfnamefont {P.}~\bibnamefont
  {Van~Mieghem}},\ }\href@noop {} {\emph {\bibinfo {title} {Performance
  analysis of complex networks and systems}}}\ (\bibinfo  {publisher}
  {Cambridge University Press},\ \bibinfo {year} {2014})\BibitemShut {NoStop}%
\bibitem [{\citenamefont {Van~Mieghem}(2011)}]{VanMieghem2011Nintertwined}%
  \BibitemOpen
  \bibfield  {author} {\bibinfo {author} {\bibfnamefont {P.}~\bibnamefont
  {Van~Mieghem}},\ }\bibfield  {title} {\bibinfo {title} {{The N-intertwined
  SIS epidemic network model}},\ }\href@noop {} {\bibfield  {journal} {\bibinfo
   {journal} {Computing}\ }\textbf {\bibinfo {volume} {93}},\ \bibinfo {pages}
  {147} (\bibinfo {year} {2011})}\BibitemShut {NoStop}%
\bibitem [{\citenamefont {Lajmanovich}\ and\ \citenamefont
  {Yorke}(1976)}]{lajmanovich1976deterministic}%
  \BibitemOpen
  \bibfield  {author} {\bibinfo {author} {\bibfnamefont {A.}~\bibnamefont
  {Lajmanovich}}\ and\ \bibinfo {author} {\bibfnamefont {J.~A.}\ \bibnamefont
  {Yorke}},\ }\bibfield  {title} {\bibinfo {title} {A deterministic model for
  gonorrhea in a nonhomogeneous population},\ }\href@noop {} {\bibfield
  {journal} {\bibinfo  {journal} {Mathematical Biosciences}\ }\textbf {\bibinfo
  {volume} {28}},\ \bibinfo {pages} {221} (\bibinfo {year} {1976})}\BibitemShut
  {NoStop}%
\bibitem [{Note2()}]{Note2}%
  \BibitemOpen
  \bibinfo {note} {NIMFA upper bounds the infection probability in the
  Markovian SIS process \cite {VanMieghem2008Virusspreadinnetworks}.
  Specifically, when $\tau _c > \tau > \tau _{c}^{(1)}$ NIMFA will not die out,
  but a Markovian SIS process will die out fast.}\BibitemShut {Stop}%
\bibitem [{\citenamefont {Khanafer}\ \emph {et~al.}(2016)\citenamefont
  {Khanafer}, \citenamefont {Ba{\c{s}}ar},\ and\ \citenamefont
  {Gharesifard}}]{khanafer2016stability}%
  \BibitemOpen
  \bibfield  {author} {\bibinfo {author} {\bibfnamefont {A.}~\bibnamefont
  {Khanafer}}, \bibinfo {author} {\bibfnamefont {T.}~\bibnamefont
  {Ba{\c{s}}ar}},\ and\ \bibinfo {author} {\bibfnamefont {B.}~\bibnamefont
  {Gharesifard}},\ }\bibfield  {title} {\bibinfo {title} {{Stability of
  epidemic models over directed graphs: A positive systems approach}},\
  }\href@noop {} {\bibfield  {journal} {\bibinfo  {journal} {Automatica}\
  }\textbf {\bibinfo {volume} {74}},\ \bibinfo {pages} {126} (\bibinfo {year}
  {2016})}\BibitemShut {NoStop}%
\bibitem [{Note3()}]{Note3}%
  \BibitemOpen
  \bibinfo {note} {An Erd\H {o}s-R\'enyi random graph (ER graph) $G_p(N)$ is
  characterized by the link between each pair of the $N$ nodes existing with
  probability $p$, independent of any other link (see, e.g. \cite
  {van2014performance}).}\BibitemShut {Stop}%
\bibitem [{Note4()}]{Note4}%
  \BibitemOpen
  \bibinfo {note} {The Markovian SIS process average is conditioned on
  non-extinction}\BibitemShut {NoStop}%
\bibitem [{\citenamefont {Mossong}\ \emph {et~al.}(2008)\citenamefont
  {Mossong}, \citenamefont {Hens}, \citenamefont {Jit}, \citenamefont
  {Beutels}, \citenamefont {Auranen}, \citenamefont {Mikolajczyk},
  \citenamefont {Massari}, \citenamefont {Salmaso}, \citenamefont {Tomba},
  \citenamefont {Wallinga} \emph {et~al.}}]{mossong2008social}%
  \BibitemOpen
  \bibfield  {author} {\bibinfo {author} {\bibfnamefont {J.}~\bibnamefont
  {Mossong}}, \bibinfo {author} {\bibfnamefont {N.}~\bibnamefont {Hens}},
  \bibinfo {author} {\bibfnamefont {M.}~\bibnamefont {Jit}}, \bibinfo {author}
  {\bibfnamefont {P.}~\bibnamefont {Beutels}}, \bibinfo {author} {\bibfnamefont
  {K.}~\bibnamefont {Auranen}}, \bibinfo {author} {\bibfnamefont
  {R.}~\bibnamefont {Mikolajczyk}}, \bibinfo {author} {\bibfnamefont
  {M.}~\bibnamefont {Massari}}, \bibinfo {author} {\bibfnamefont
  {S.}~\bibnamefont {Salmaso}}, \bibinfo {author} {\bibfnamefont {G.~S.}\
  \bibnamefont {Tomba}}, \bibinfo {author} {\bibfnamefont {J.}~\bibnamefont
  {Wallinga}}, \emph {et~al.},\ }\bibfield  {title} {\bibinfo {title} {Social
  contacts and mixing patterns relevant to the spread of infectious diseases},\
  }\href@noop {} {\bibfield  {journal} {\bibinfo  {journal} {PLoS medicine}\
  }\textbf {\bibinfo {volume} {5}},\ \bibinfo {pages} {e74} (\bibinfo {year}
  {2008})}\BibitemShut {NoStop}%
\bibitem [{\citenamefont {Verelst}\ \emph {et~al.}(2021)\citenamefont
  {Verelst}, \citenamefont {Hermans}, \citenamefont {Vercruysse}, \citenamefont
  {Gimma}, \citenamefont {Coletti}, \citenamefont {Backer}, \citenamefont
  {Wong}, \citenamefont {Wambua}, \citenamefont {van Zandvoort}, \citenamefont
  {Willem} \emph {et~al.}}]{verelst2021socrates}%
  \BibitemOpen
  \bibfield  {author} {\bibinfo {author} {\bibfnamefont {F.}~\bibnamefont
  {Verelst}}, \bibinfo {author} {\bibfnamefont {L.}~\bibnamefont {Hermans}},
  \bibinfo {author} {\bibfnamefont {S.}~\bibnamefont {Vercruysse}}, \bibinfo
  {author} {\bibfnamefont {A.}~\bibnamefont {Gimma}}, \bibinfo {author}
  {\bibfnamefont {P.}~\bibnamefont {Coletti}}, \bibinfo {author} {\bibfnamefont
  {J.~A.}\ \bibnamefont {Backer}}, \bibinfo {author} {\bibfnamefont {K.~L.}\
  \bibnamefont {Wong}}, \bibinfo {author} {\bibfnamefont {J.}~\bibnamefont
  {Wambua}}, \bibinfo {author} {\bibfnamefont {K.}~\bibnamefont {van
  Zandvoort}}, \bibinfo {author} {\bibfnamefont {L.}~\bibnamefont {Willem}},
  \emph {et~al.},\ }\bibfield  {title} {\bibinfo {title} {Socrates-comix: a
  platform for timely and open-source contact mixing data during and in between
  covid-19 surges and interventions in over 20 european countries},\
  }\href@noop {} {\bibfield  {journal} {\bibinfo  {journal} {BMC medicine}\
  }\textbf {\bibinfo {volume} {19}},\ \bibinfo {pages} {1} (\bibinfo {year}
  {2021})}\BibitemShut {NoStop}%
\bibitem [{Note5()}]{Note5}%
  \BibitemOpen
  \bibinfo {note} {As an extension of our prediction method in this application
  setting, one could consider not only the most recent graph $G_{m-1}$ to be
  known, but the whole graph sequence $G_1, ..., G_{m-1}$.}\BibitemShut {Stop}%
\bibitem [{\citenamefont {Prasse}\ \emph {et~al.}(2021)\citenamefont {Prasse},
  \citenamefont {Devriendt},\ and\ \citenamefont
  {Van~Mieghem}}]{prasse2021clustering}%
  \BibitemOpen
  \bibfield  {author} {\bibinfo {author} {\bibfnamefont {B.}~\bibnamefont
  {Prasse}}, \bibinfo {author} {\bibfnamefont {K.}~\bibnamefont {Devriendt}},\
  and\ \bibinfo {author} {\bibfnamefont {P.}~\bibnamefont {Van~Mieghem}},\
  }\bibfield  {title} {\bibinfo {title} {{Clustering for epidemics on networks:
  A geometric approach}},\ }\href@noop {} {\bibfield  {journal} {\bibinfo
  {journal} {Chaos: an interdisciplinary journal of nonlinear science}\
  }\textbf {\bibinfo {volume} {31}},\ \bibinfo {pages} {063115} (\bibinfo
  {year} {2021})}\BibitemShut {NoStop}%
\bibitem [{\citenamefont {Van~Mieghem}\ and\ \citenamefont
  {Bovenkamp}(2013)}]{Van2013non}%
  \BibitemOpen
  \bibfield  {author} {\bibinfo {author} {\bibfnamefont {P.}~\bibnamefont
  {Van~Mieghem}}\ and\ \bibinfo {author} {\bibfnamefont {R.~v.~d.}\
  \bibnamefont {Bovenkamp}},\ }\bibfield  {title} {\bibinfo {title}
  {Non-{M}arkovian infection spread dramatically alters the
  susceptible-infected-susceptible epidemic threshold in networks},\
  }\href@noop {} {\bibfield  {journal} {\bibinfo  {journal} {Physical review
  letters}\ }\textbf {\bibinfo {volume} {110}},\ \bibinfo {pages} {108701}
  (\bibinfo {year} {2013})}\BibitemShut {NoStop}%
\bibitem [{\citenamefont {Prasse}\ and\ \citenamefont
  {Van~Mieghem}(2020)}]{prasse2020time}%
  \BibitemOpen
  \bibfield  {author} {\bibinfo {author} {\bibfnamefont {B.}~\bibnamefont
  {Prasse}}\ and\ \bibinfo {author} {\bibfnamefont {P.}~\bibnamefont
  {Van~Mieghem}},\ }\bibfield  {title} {\bibinfo {title} {{Time-dependent
  solution of the NIMFA equations around the epidemic threshold}},\ }\href@noop
  {} {\bibfield  {journal} {\bibinfo  {journal} {Journal of mathematical
  biology}\ }\textbf {\bibinfo {volume} {81}},\ \bibinfo {pages} {1299}
  (\bibinfo {year} {2020})}\BibitemShut {NoStop}%
\bibitem [{\citenamefont {Gr\"{o}nwall}(1919)}]{gronwall1919note}%
  \BibitemOpen
  \bibfield  {author} {\bibinfo {author} {\bibfnamefont {T.~H.}\ \bibnamefont
  {Gr\"{o}nwall}},\ }\bibfield  {title} {\bibinfo {title} {Note on the
  derivatives with respect to a parameter of the solutions of a system of
  differential equations},\ }\href@noop {} {\bibfield  {journal} {\bibinfo
  {journal} {Annals of Mathematics}\ ,\ \bibinfo {pages} {292}} (\bibinfo
  {year} {1919})}\BibitemShut {NoStop}%
\bibitem [{\citenamefont {Biggerstaff}\ \emph {et~al.}(2014)\citenamefont
  {Biggerstaff}, \citenamefont {Cauchemez}, \citenamefont {Reed}, \citenamefont
  {Gambhir},\ and\ \citenamefont {Finelli}}]{biggerstaff2014estimates}%
  \BibitemOpen
  \bibfield  {author} {\bibinfo {author} {\bibfnamefont {M.}~\bibnamefont
  {Biggerstaff}}, \bibinfo {author} {\bibfnamefont {S.}~\bibnamefont
  {Cauchemez}}, \bibinfo {author} {\bibfnamefont {C.}~\bibnamefont {Reed}},
  \bibinfo {author} {\bibfnamefont {M.}~\bibnamefont {Gambhir}},\ and\ \bibinfo
  {author} {\bibfnamefont {L.}~\bibnamefont {Finelli}},\ }\bibfield  {title}
  {\bibinfo {title} {Estimates of the reproduction number for seasonal,
  pandemic, and zoonotic influenza: a systematic review of the literature},\
  }\href@noop {} {\bibfield  {journal} {\bibinfo  {journal} {BMC infectious
  diseases}\ }\textbf {\bibinfo {volume} {14}},\ \bibinfo {pages} {1} (\bibinfo
  {year} {2014})}\BibitemShut {NoStop}%
\bibitem [{Note6()}]{Note6}%
  \BibitemOpen
  \bibinfo {note} {We denote Unif$(a,b)$ for the continuous uniform
  distribution on the interval $[a,b]$ and Unif$\{a,b\}$ for the discrete
  uniform distribution on the set $\{a, a+ 1,\protect \dots , b-1,
  b\}$}\BibitemShut {NoStop}%
\end{thebibliography}%

\end{document}